\definecolor{rouge}{rgb}{0.85,0.1,.4}
\definecolor{bleu}{rgb}{0.1,0.2,0.9}
\definecolor{violet}{rgb}{0.7,0,0.8}
\newcommand{\Wak}[2]{\mathbb{W}_{#1}^{#2}}
\newcommand{\V}{\mathbb{V}}
\newcommand{\wh}{\widehat}
\newcommand{\bra}{{\langle}}
\newcommand{\ket}{{\rangle}}
\newcommand{\on}{\operatorname}
\newcommand{\+}{\mathop{\oplus}}
\renewcommand{\*}{{\otimes}}
\newcommand{\mc}{\mathcal}
\newcommand{\mf}{\mathfrak}
\newcommand{\mb}{\mathbb}
\newcommand{\g}{\mf{g}}
\newcommand{\h}{\mf{h}}
\newcommand{\affg}{\widehat{\mf{g}}}
\newcommand{\isomap}{{\;\stackrel{_\sim}{\to}\;}}
\newcommand{\Z}{\mathbb{Z}}
\newcommand{\C}{\mathbb{C}}
\newcommand{\Q}{\mathbb{Q}}
\newcommand{\W}{\mathscr{W}}
\newcommand{\ra}{\rightarrow}
\newcommand{\lam}{\lambda}
\def\leq{\leqslant}
\def\geq{\geqslant}
\DeclareMathOperator{\Hom}{Hom}
\newcommand{\nc}{\newcommand}
\nc{\la}{\lambda}
\nc{\wt}{\widetilde}
\nc{\sw}{{\mathfrak s}{\mathfrak l}}
\nc{\ghat}{\wh{\g}}
\nc{\hhat}{\wh{\h}}
\nc{\bi}{\bibitem}
\nc{\pa}{\partial}
\nc{\ppart}{(\!(t)\!)}
\nc{\pparl}{(\!(\la)\!)}
\nc{\zpart}{(\!(z)\!)}
\nc{\n}{{\mathfrak n}}
\nc{\ol}{\overline}
\nc{\bb}{{\mathfrak b}}
\nc{\su}{\wh\sw_2}
\nc{\can}{\on{can}}
\nc{\ntil}{\wt{\n}}
\nc{\pone}{{\mathbb P}^1}
\nc{\bs}{\backslash}
\nc{\al}{\alpha}
\nc{\gt}{{\mathfrak g}'}
\nc{\ds}{\displaystyle}
\nc{\Bun}{\on{Bun}}
\nc{\Gr}{\on{Gr}}
\def\neg{\negthinspace}
\def\b{{\mathfrak b}}
\nc{\ka}{\kappa}
\nc{\cka}{\check\ka}
\nc{\cmu}{{\check\mu}}
\def\lg{{}^L\neg\g}
\def\lh{{}^L\neg\h}
\def\LG{{}^L\neg G}
\nc{\lka}{{}^L\neg\ka}
\nc{\LP}{{}^L\neg P}
\nc{\LQ}{{}^L\neg Q}
\nc{\lkt}{{}^L\neg\wt\ka}
\nc{\OO}{\mathcal O}
\nc{\Loc}{\on{Loc}}
\nc{\sto}{\!\!\shortto\!\!}
\nc{\hsl}{\widehat{\mathfrak sl}}
\theoremstyle{theorem}
\newtheorem{Th}{Theorem}[section]
\newtheorem{Pro}[Th]{Proposition}
\newtheorem{Lem}[Th]{Lemma}
\newtheorem{Co}[Th]{Corollary}
\theoremstyle{remark}
\numberwithin{equation}{section}
\title{Quantum Langlands duality of representations of ${\mc
    W}$-algebras}
\author{Tomoyuki Arakawa and Edward Frenkel}
\address{Research Institute for Mathematical Sciences, Kyoto University,
Kyoto 606-8502, JAPAN}
\email{arakawa@kurims.kyoto-u.ac.jp}
\address{Department of Mathematics, University of California,
  Berkeley, CA 94720, USA}
\begin{document}

\begin{abstract}
  We prove duality isomorphisms of certain representations of ${\mc
    W}$-algebras which play an essential role in the quantum
  geometric Langlands Program and some related results.
\end{abstract}

\maketitle

\section{Introduction}

Let $G$ be a connected, simply-connected simple algebraic group over
$\C$, $\LG$ its Langlands dual group, $\g=\on{Lie}(G)$,
$\lg=\on{Lie}(\LG)$.  By a level $\kappa$ we will mean a choice of a
symmetric invariant bilinear form on $\g$. We will denote by
$\check{\kappa}$ the level for $\lg$ whose restriction to the Cartan
subalgebra $\lh \subset \lg$ is dual to the restriction of $\kappa$ to
its Cartan subalgebra $\h \subset \g$ under the canonical isomorphism
$\h^* \cong \lh$.

Let $X$ be a smooth projective curve over $\C$.  Denote by
$\on{Bun}_G$ the moduli stack of principal $G$-bundles on $X$, and by
$D_\kappa\on{-mod}(\on{Bun}G)$ the derived category of
$(\kappa+\kappa_c)$-twisted $D$-modules on $\on{Bun}_G$.  Here
$\kappa_c$ corresponds to the critical level of $G$ (and square root
of the canonical line bundle on $\Bun_G$); that is,
$\kappa_c=\kappa_\g/2$, where $\kappa_{\g}$ is the Killing form of
$\g$.

In what follows, we call $\kappa$ {\em irrational} if
$\kappa/\kappa_\g \in \C \bs \Q$, and {\em generic} if
$\kappa/\kappa_\g \in \C \bs S$ for some countable subset $S \subset
\C$.

The global quantum geometric Langlands correspondence
\cite{Sto,GaiQL,FG} states that for irrational $\ka$ there should be
an equivalence of derived categories\footnote{A similar, but more
  subtle, equivalence is expected for rational values of $\ka$ as
  well.}
 \begin{align}
\mathbb{L}_\ka: D_\ka\on{-mod}(\on{Bun}_G) \isomap
D_{-\check\ka}\on{-mod}(\on{Bun}_{\LG}).
\end{align} 
%This equivalence should be compatible with the local quantum geometric
%Langlands correspondence, see \cite{GaiQL} for the details.
In recent works \cite{GaiQL,CreGai,FG,Gaitsgory:uo}, various
constructions of the equivalence $\mathbb{L}_\ka$ have been proposed
that use representations of the ${\mc W}$-algebras ${\mc W}^\ka(\g)$
and ${\mc W}^{\cka}(\lg)$ and the isomorphism \cite{FeiFre91,FF}
$$
{\mc W}^\ka(\g) \cong {\mc W}^{\cka}(\lg).
$$
In particular, in D. Gaitsgory's construction \cite{Gaitsgory:uo}
an essential role is played by the duality isomorphisms
\begin{equation}    \label{main}
T^\ka_{\lam,\cmu} \cong \check{T}^{\cka}_{\cmu,\lam}.
\end{equation}
Here $\lam$ (resp., $\cmu$) is a dominant integral weight of $\g$
(resp., $\lg$); $T^\ka_{\lam,\cmu}$ and $\check{T}^{\cka}_{\cmu,\lam}$
are certain representations of ${\mc W}^\ka(\g)$ and ${\mc
  W}^{\cka}(\lg)$ (see Section \ref{fam mod} for the
definition). These isomorphisms also appeared in \cite{CreGai,FG} in a
similar context.

The goal of this paper is to prove the isomorphisms \eqref{main} for
irrational $\ka$ (see Theorem \ref{Th:symmetry} below) and some
closely related results.

The paper is organized as follows. In Section \ref{statement}, we
introduce the functor $H^\bullet_{DS,\cmu}(?)$ of quantum
Drinfeld--Sokolov reduction twisted by $\cmu \in \check{P}_+$ and the
family of modules $T^\ka_{\lam,\cmu} = H^0_{DS,\cmu}({\mathbb
  V}_{\lam,\ka})$, where ${\mathbb V}_{\lam,\ka}$ is the Weyl module
over $\ghat$ of highest weight $\lam \in P_+$ and level
$\ka+\ka_c$. We then state our main results: $H^j_{DS,\cmu}({\mathbb
  V}_{\lam,\ka}) = 0$ if $j \neq 0$ for any $\ka \in \C$ (Theorem
\ref{Th:van}); the isomorphisms \eqref{main} for irrational $\ka$
(Theorem \ref{Th:symmetry}); and irreducibility of $T^\ka_{\lam,\cmu}$
for irrational $\ka$ (Theorem \ref{Th:irr}).

In Section \ref{pr} we prove Theorem \ref{Th:symmetry} using a
realization of $T^\ka_{\lam,\cmu}$ for irrational $\ka$ as the
intersection of the kernels of powers of screening operators. In
Section \ref{irrvan}, using the results of \cite{Ara04,Ara07}, we
prove that $T^\ka_{\lam,\cmu}$ is irreducible for irrational $\ka$ and
to identify its highest weight as a ${\mc W}$-algebra module. Using
this fact, we give another proof of Theorem \ref{Th:symmetry}. Then we
prove the vanishing Theorem \ref{Th:van} in Section \ref{vangen} and
compute the characters of $T^\ka_{\lam,\cmu}$ in Section \ref{char
  form}. In Section \ref{fail} we show that the statement of Theorem
\ref{Th:symmetry} with rational $\kappa$ is false already for
$\g=\mf{sl}_2$. Finally, in Section \ref{higher} we construct a
BGG-type resolution of the modules for $T^\ka_{\lam,\cmu}$ with
irrational $\ka$ and discuss the $\ka \to \infty$ limit of this
resolution, following \cite{FF93}.

\subsection*{Acknowledgements}

We thank Thomas Creutzig, Dennis Gaitsgory and Davide Gaiotto for
valuable discussions. We are grateful to Sam Raskin for his helpful
comments on the first version of this paper; in particular, he pointed
out that a conjecture we had made in the first version about the
rational levels was incorrect.

T.A. is partially supported by JSPS KAKENHI Grant Number 17H01086,
\linebreak 17K18724.

\section{Statement of the main results}    \label{statement}

Let $\g$ be a simple Lie algebra over $\C$ of rank $r$, $\{
e_i,h_i,f_i \}$ its standard generators, $\g=\mf{n}_-\+
\h\+\mf{n}_+$ its triangular decomposition. Let $\Delta$ be
the set of roots of $\g$, $\Delta_+ \subset \Delta$ the set of
positive roots, $\Pi$ the set of simple roots, $P$ the weight lattice,
$\check{P}$ the coweight lattice. In what follows, we will use the
notation $e_\alpha$ (resp., $f_\alpha$) for a non-zero element of
$\n_+$ (resp., $\n_-$) corresponding to a root $\alpha \in \Delta_+$
(resp., $-\alpha \in \Delta_-$).

Let $\affg_{\kappa}=\g\ppart\+\C \mathbf{1}$ be the affine Kac--Moody
Lie algebra associated with $\g$ and level $\kappa+\kappa_c$, defined
by the commutation relation
$$[xf,yg]=[x,y]fg+(\kappa+\kappa_c)(x,y)\on{Res}_{t=0}(gdf)\mathbf{1},$$
$[\mathbf{1},\affg]=0$.
Let 
$$
V^\kappa(\g)=U(\affg_\kappa)\*_{U(\g[[t]]\* \C \mathbf{1})}\C,
$$
where $\C$ is the regarded as a one-dimensional representation of 
$\g[[t]]\* \C \mathbf{1}$ on which $\g[[t]]$ acts trivially and
$\mathbf{1}$ acts as the identity.
$V^\kappa(\g)$ is naturally a vertex algebra, and is called the {\em
  universal affine vertex algebra}
associated to $\g$ at level $\kappa+\kappa_c$.
A $V^\kappa(\g)$-module is the same as a smooth $\affg_{\kappa}$-module.

Let $H_{DS}^\bullet(M)$ be the functor of quantum Drinfeld--Sokolov
reduction with coefficients in a $\affg$-module $M$ \cite{FF90,FF}
(see Chapter 15 of \cite{FreBen04} for a survey). By definition, we
have
\begin{align*}
H_{DS}^\bullet(M)=H^{\infty/2+\bullet}(\mf{n}_+\ppart, M\* \C_{\Psi}),
\end{align*}
where $H^{\infty/2+\bullet}(\mf{n}_+\ppart,?)$ denotes the functor of
Feigin's semi-infinite cohomology of $\mf{n}_+\ppart$ and $\C_{\Psi}$
is the one-dimensional representation of $\mf{n}_+\ppart$
corresponding to a non-degenerate character $\Psi:\mf{n}_+\ppart\ra
\C$. The latter is defined by the formula
\begin{equation}    \label{psi}
\Psi(x \; f(t)) = \psi(x) \cdot \on{Res}_{t=0} f(t) dt, \qquad x \in
\mf{n}_+, \quad f(t)
\in \C\ppart,
\end{equation}
where $\psi$ is a character of $\mf{n}_+$ given by the formulas
\begin{equation}    \label{psial}
\psi(e_\al) = \begin{cases} 1, & \on{if} \; \al \; \on{is} \;
    \on{simple} \\ 0, & \on{otherwise} \end{cases}
\end{equation}
Formula \eqref{psi} shows that
if we identify the dual space to $\mf{n}_+\ppart$ with $\mf{n}_+^*\ppart
dt$ using the non-degenerate pairing between the latter and
$\mf{n}_+\ppart$ defined by the formula
$$
\bra \varphi \; g(t) , x \; f \ket = \bra \varphi,x \ket \cdot
\on{Res}_{t=0} f(t)g(t) dt, \qquad \varphi \in \n_+^*, \quad x \in \n_+,
$$
then $\Psi$ corresponds to the element $\psi \, dt \in
\mf{n}_+^*\ppart dt$.

Let $\bigwedge^{\infty/2+\bullet}(\mf{n}_+)$ be the fermionic ghosts
vertex algebra associated with $\mf{n}_+$. As a vector space, it is an
irreducible module over the Clifford algebra $\on{Cl}_{\n_+}$
associated to the vector space
$$
\mf{n}_+\ppart \oplus \mf{n}_+^*\ppart dt
$$
with a non-degenerate bilinear form induced by the above pairing. The
algebra $\on{Cl}_{\n_+}$ is topologically generated by $\psi_{\al,n} =
e_\al t^n, \psi^*_{\al,n} = e^*_\al t^{n-1} dt, \al \in \Delta_+, n
\in \Z$ with the relations
$$
[\psi_{\al,n},\psi^*_{\beta,m}]_+ = \delta_{\al,\beta}\delta_{n,-m},
\qquad [\psi_{\al,n},\psi_{b,m}]_+ =
[\psi^*_{\al,n},\psi^*_{\beta,m}]_+ = 0.
$$
The module $\bigwedge^{\infty/2+\bullet}(\mf{n}_+)$ is generated by a
vector $|0\ket$ such that
\begin{equation}    \label{ann}
\psi_{\al,n} |0\ket = 0, \quad n\geq 0, \qquad \psi^*_{\al,m} |0\ket =
0, \quad m>0.
\end{equation}
We define a $\Z$-grading on $\bigwedge^{\infty/2+\bullet}(\mf{n}_+)$
by the formulas $\on{deg} |0\ket = 0, \on{deg} \psi_{\al,n} = -1,
\on{deg} \psi^*_{\al,n} = 1$.

The graded space $H_{DS}^\bullet(M)$ is the cohomology of the complex
$(C(M),d)$, where
$$
C(M)=M\otimes \bigwedge{}^{\infty/2+\bullet}(\mf{n}_+).
$$
with respect to the differential
\begin{equation}    \label{dst}
d=d_{\on{st}}+\wh\Psi,
\end{equation}
where $d_{\on{st}}$ is the standard differential computing
semi-infinite cohomology
$$
H^{\infty/2+\bullet}(\mf{n}_+\ppart, M)
$$
(see formula (15.1.5) of \cite{FreBen04}) and $\wh\Psi$ stands for the
contraction operator on $\bigwedge^{\infty/2+\bullet}(\mf{n}_+)$
corresponding to $\Psi$ viewed as an element of $\mf{n}_+^*\ppart
dt$. In other words,
$$
\wh\Psi = \sum_{i=1}^r \psi^*_{\al_i,1}.
$$

It is known that $H_{DS}^i(V^\kappa(\g)) = 0$ for $i \neq 0$ (see
Theorem 15.1.9 of \cite{FreBen04}). The vertex algebra
$H_{DS}^0(V^\kappa(\g))$ is called the (principal) {\em ${\mc
    W}$-algebra} associated with $\g$ at level $\kappa+\kappa_c$. We
denote it by $\W^\kappa(\g)$.

We have the {\em Feigin--Frenkel duality} isomorphism
\cite{FeiFre91,FF}
\begin{align}
\W^\kappa(\g)\cong \W^{\check{\kappa}}(\lg),
\label{eq:FF-duality}
\end{align}
where $\lg$ is the Langlands dual Lie algebra to $\g$ and
$\check{\kappa}$ is the invariant bilinear form on $\lg$ that is dual
to $\kappa$ (see the Introduction).

\subsection{Twist}    \label{twist}

For $\cmu \in \check{P}$, we define a character $\Psi_\cmu$ of
$\mf{n}_+\ppart$ by the formula
\begin{equation}    \label{Psimu}
\Psi_\cmu(e_\alpha f(t)) = \psi(e_{\alpha}) \cdot \on{Res}_{t=0} f(t)
t^{\bra \cmu,\alpha \ket} dt, \qquad f(t) \in \C\ppart.
\end{equation}

Given a $V^\kappa(\g)$-module $M$, we define a new differential on the
complex $C(M)$:
\begin{equation}    \label{dmu}
d_\cmu = d_{\on{st}} + \wh\Psi_\cmu
\end{equation}
where $d_{\on{st}}$ is the standard differential appearing in
\eqref{dst} and $\wh\Psi_\cmu$ is the contraction operator
corresponding to the character $\Psi_\cmu$, viewed as an element
of $\mf{n}_+^*\ppart dt$:
$$
\wh\Psi_\cmu = \sum_{i=1}^r \psi^*_{\al_i,\bra \cmu,\alpha_i \ket+1}.
$$

We then define the functor $H^\bullet_{DS,\cmu}(?)$ by the formula
\begin{align*}
H_{DS,\check{\mu}}^\bullet(M)=H^{\infty/2+\bullet}(\mf{n}_+\ppart, M\*
\C_{\Psi_{\check{\mu}}}),
\end{align*}
where $\C_{\Psi_{\check{\mu}}}$ is the one-dimensional representation
of $\mf{n}((t))$ corresponding to the character $\Psi_{\check{\mu}}$.
We note that the functor $H^\bullet_{DS,\cmu}(?)$ has been studied in
\cite{FG}, Sect. 18 (mostly, in the critical level case $\ka=0$).

We define the structure of a $\W^\kappa(\g)$-module on
$H_{DS,\check{\mu}}^i(M), i \in \Z$, as follows.

For every $\cmu\in \check{P}$, let $\sigma_{\cmu}$ be the following
``spectral flow'' automorphism of $\affg_{\kappa}$:
\begin{align*}
e_i t^n &\mapsto e_i t^{n-\cmu_i}, \\
f_i t^n &\mapsto f_i t^{n+\cmu_i}, \\
h_i t^n &\mapsto h_i t^n - (\kappa+\kappa_c)(e_i,f_i) \cmu_i
\delta_{n,0},
\end{align*}
where
$$
\cmu_i = \langle \cmu,\al_i \rangle.
$$
Note that if $\cmu \in \LP = \on{Hom}(\C^\times,H) \subset \check{P}$,
then $\sigma_\cmu = \on{Ad}_{-\cmu(t)}$, where $-\cmu(t) \in H\ppart
\subset G\ppart$. For general $\cmu \in \check{P}$, $\sigma_\cmu \in
\on{Aut}(\affg_\ka)$ is a Tits lifting of the element of the extended
affine Weyl group corresponding to $\cmu$.

Given a $V^\kappa(\g)$-module (equivalently, a
$\affg_{\kappa}$-module) $M$, let $\sigma^*_\cmu M$ be the vector space
$M$ with the action of $\affg_{\kappa}$ twisted by the
automorphism $\sigma_{\cmu}$, i.e. $x \in \affg_{\kappa}$ acts as
$\sigma_\cmu(x)$. We will use the same notation
$\sigma^*_\cmu M$ for the corresponding $V^\kappa(\g)$-module.

We also define an automorphism similar to $\sigma_\cmu$ on the
Clifford algebra $\on{Cl}_{\n_+}$:
\begin{align*}
\psi_{\al,n} &\mapsto \psi_{\al,n-\cmu_i}, \\
\psi^*_{\al,n} &\mapsto \psi^*_{\al,n+\cmu_i}.
\end{align*}
Let $\sigma^*_\cmu \bigwedge{}^{\infty/2+\bullet}(\mf{n}_+)$ be the
twist of $\bigwedge{}^{\infty/2+\bullet}(\mf{n}_+)$, considered as a
$\on{Cl}_{\n_+}$-module, by this automorphism.

Combining these two automorphisms, we obtain an automorphism of
$C(V^\ka(\g)) = C(V^\ka(\g)) \otimes
\bigwedge{}^{\infty/2+\bullet}(\mf{n}_+)$ which we will also denote by
$\sigma_\cmu$. For any $V^\ka(\g)$-module $M$, let $\sigma^*_\cmu
C(V^\ka(\g))$ be the corresponding twist of $C(M) = M \otimes
\bigwedge{}^{\infty/2+\bullet}(\mf{n}_+)$ viewed as a module over the
tensor product of the enveloping algebra of $\ghat_\ka$ and
$\on{Cl}_{\n_+}$, or equivalently, as a module over the vertex algebra
$C(V^\ka(\g))$.

According to \cite{Li97}, the action of all fields from $C(V^\ka(\g))$
on $\sigma^*_\cmu C(V^\ka(\g))$ can be described explicitly by the
formula
\begin{equation}    \label{DeltaLi}
A \in C(V^\kappa(\g)) \mapsto Y_{C(M)}(\Delta(\cmu,z)A,z),
\end{equation}
where $\Delta(\cmu,z)$ is Li's delta operator (see \cite{Li97},
Section 3) corresponding to the field
\begin{equation}    \label{cmuz}
\cmu_i(z) + \sum_{\al \in \Delta_+} \bra \alpha_i,\cmu \ket :\! \psi_\al(z)
\psi^*_\al(z) \! :
\end{equation}
in $C(V^\kappa(\g))$, where $\cmu$ is viewed as an element of $\h =
\check{P} \underset{\Z}\otimes \C$.

The $\Z$-grading on $C(V^\ka(\g))$ and the differential $d$ given by
formula \eqref{dst} endow $(C(V^\ka(\g)),d)$ with the structure of a
differential graded vertex superalgebra. Its $0$th cohomology is
$\W^\kappa(\g)$ and all other cohomologies vanish. Furthermore,
$\W^\kappa(\g)$ can be embedded into the vertex subalgebra of
$C(V^\ka(\g))$ generated by the fields \eqref{cmuz} with $\cmu \in
\check{P}$ \cite{FF,FreBen04}. This vertex subalgebra is in fact
isomorphic to the Heisenberg vertex algebra $\pi^\ka$ and this
embedding is equivalent to the Miura map, see Section
\ref{pr} below for more details.

For any $\ghat_\ka$-module $M$, the complex $C(M) = M \otimes
\bigwedge{}^{\infty/2+\bullet}(\mf{n}_+)$ is naturally a
$C(V^\ka(\g))$-module. The $\W$-algebra $\W^\kappa(\g)$, viewed as a
subalgebra of $\pi^\ka \subset C(V^\ka(\g))$, acts on $C(M)$ and
therefore on the cohomology of $d$ on $C(M)$, which is
$H^\bullet_{DS}(M)$. This gives us a more explicit description of the
action of $\W^\kappa(\g)$ on $H^\bullet_{DS}(M)$.

Now take the $C(V^\ka(\g))$-module $\sigma^*_\cmu C(M)$. As a vector
space, it is $C(M)$, but it is equipped with a modified structure of
$C(V^\ka(\g))$-module; namely, the one obtained by twisting the action
by $\sigma_\cmu$ (see formula \eqref{DeltaLi}). Since $\pi^\ka$ is a
vertex subalgebra of $C(V^\ka(\g))$, we obtain that $\sigma^*_\cmu
C(M)$ is a $\pi^\ka$-module, and hence a $\W^\kappa(\g)$-module.
However, the action of $\W^\kappa(\g)$ now commutes not with $d$ but
with $\sigma_\cmu(d) = d_\cmu$. Indeed, we have
$$
\sigma_\cmu(d_{\on{st}}) = d_{\on{st}}, \qquad \sigma_\cmu(\wh\Psi) =
\wh\Psi_\cmu.
$$
Hence, under the $\sigma_\cmu$-twisted action, $\W^\kappa(\g)$
naturally acts on the cohomologies of the complex $C(M)$ with respect
to the differential $d_\cmu$. Thus, we obtain the structure of a
$\W^\kappa(\g)$-module on $H_{DS,\check{\mu}}^i(M), i \in \Z$.

\subsection{Family of modules}    \label{fam mod}

We define a family of modules over $\W^\kappa(\g)$ paramet\-rized
by $\lam \in P_+, \cmu \in \check{P}_+$.

For $\lam\in \h^*$, let $\mathbb{V}_{\lam}^\kappa$ denote the
irreducible highest weight representation of $\affg_\kappa$ with
highest weight $\lam$.

If $\lam \in P_+$, we also denote by $\mathbb{V}_{\lam,\kappa}$
the Weyl module induced from the irreducible finite-dimensional
representation $V_\lam$ of $\g$. If $\kappa$ is irrational and
$\lam \in P_+$, then $\mathbb{V}_{\lam}^\kappa =
\mathbb{V}_{\lam,\kappa}$.

In Section \ref{vangen} we will prove the following:

\begin{Th}    \label{Th:van}
For any $\kappa \in \C$ and any $\lam \in P_+, \cmu \in \check{P}$,
we have $H^j_{DS,\cmu}({\mb V}_{\lam,\ka}) = 0$ for all $j \neq 0$.
\end{Th}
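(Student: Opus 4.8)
The plan is to reduce the vanishing of $H^j_{DS,\cmu}(\mb V_{\lam,\ka})$ for $j\neq 0$ to the already-known vanishing of $H^j_{DS}(V^\ka(\g))$ (equivalently $H^j_{DS}(\mb V_{\lam,\ka})=0$ for $j\neq 0$, which for generic $\ka$ is classical and for all $\ka$ follows from Arakawa's work) by using the spectral-flow automorphism $\sigma_\cmu$ to transfer the twisted complex to an untwisted one. Concretely, the complex $(C(\mb V_{\lam,\ka}),d_\cmu)$ computing $H^\bullet_{DS,\cmu}(\mb V_{\lam,\ka})$ is, by construction of Section \ref{twist}, obtained from $(C(\sigma^*_\cmu\mb V_{\lam,\ka}),d)$ by applying $\sigma_\cmu$: since $\sigma_\cmu(d)=d_\cmu$ and $\sigma_\cmu$ is an isomorphism of the underlying vector space intertwining the two differentials, we get $H^j_{DS,\cmu}(\mb V_{\lam,\ka})\cong H^j_{DS}(\sigma^*_\cmu\mb V_{\lam,\ka})$ as vector spaces. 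So it suffices to prove $H^j_{DS}(\sigma^*_\cmu\mb V_{\lam,\ka})=0$ for $j\neq 0$, i.e. to establish the relevant vanishing for the spectral-flow twisted Weyl module rather than for $\mb V_{\lam,\ka}$ itself.

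The next step is to identify $\sigma^*_\cmu\mb V_{\lam,\ka}$, or at least to bound its ``size'' in a way compatible with the standard vanishing arguments. For $\cmu\in\LP$ the automorphism $\sigma_\cmu$ is $\Ad_{-\cmu(t)}$, so $\sigma^*_\cmu\mb V_{\lam,\ka}$ is again a smooth $\affg_\ka$-module, generated by the image of the highest weight vector, on which $\n_+[[t]]$ no longer acts locally nilpotently in the naive way but $t^{N}\g[[t]]$ still acts locally nilpotently for $N$ large; in general the twist only shifts the $t$-powers in the grading by amounts $\cmu_i$. The key structural input is that the BRST/semi-infinite cohomology complex carries a second, auxiliary grading (a ``charge'' or conformal-type filtration on $C(M)$ coming from the $L_0$-action assembled from the Sugawara field, the ghost grading, and the $\cmu$-shift) with respect to which $d_\cmu$ is homogeneous, and the associated graded complex is a Koszul-type complex whose cohomology is computed exactly as in the untwisted case. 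I would run the standard argument (as in Chapter 15 of \cite{FreBen04} or in \cite{Ara04,Ara07}): filter $C(\mb V_{\lam,\ka})$ by this grading, observe $d_\cmu$ has a leading term $d_\cmu^{(0)}$ equal to a Koszul differential on $\mb V_{\lam,\ka}\otimes\bw{\bullet}(\n_+\ppart/\n_+[[t]]\text{-part})$ shifted by $\cmu$, whose cohomology is concentrated in degree $0$ because the relevant $\n_+$-action on the Weyl module is free enough; then conclude by a spectral sequence that $H^j=0$ for $j\neq 0$. The point that the Weyl module $\mb V_{\lam,\ka}$ is free over $U(t^{-1}\g[t^{-1}])$ (PBW) is what makes the leading-term cohomology collapse to degree zero, and this freeness is unaffected by the $\sigma_\cmu$-twist.

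The main obstacle I anticipate is making the grading/filtration argument work uniformly in $\ka$, including the non-generic (rational) values, where the Weyl module $\mb V_{\lam,\ka}$ need not be irreducible and one cannot simply cite the generic-$\ka$ computation of $H^\bullet_{DS}$. Here the correct move is to avoid irreducibility entirely: set up the filtration so that $\gr d_\cmu$ depends on $\ka$ only through the symbol, which is $\ka$-independent, so that the associated graded complex $(\gr C(\mb V_{\lam,\ka}),\gr d_\cmu)$ and its cohomology are literally the same for all $\ka$; since for one generic value of $\ka$ we know $H^{j\neq 0}=0$ and the graded dimensions are upper semicontinuous, the vanishing must hold for all $\ka$. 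Getting the bookkeeping of the $\cmu$-shifts in \eqref{Psimu} and in $\sigma_\cmu$ to cohere with this filtration — in particular checking that $\wh\Psi_\cmu=\sum_i\psi^*_{\al_i,\cmu_i+1}$ sits in the correct filtered piece and contributes only to higher-order terms relative to $d_{\on{st}}$ (or can be absorbed into the leading Koszul differential after a change of variables $t\mapsto$ twisted coordinates) — is the delicate calculation, but it is exactly the kind of computation already carried out in \cite{FG}, Sect.~18 for the critical level, and I would follow that template.
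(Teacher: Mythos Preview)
Your first reduction is incorrect. The relation $\sigma_\cmu(d)=d_\cmu$ in Section~\ref{twist} is a statement about elements of the vertex algebra $C(V^\ka(\g))$; it is used there to transport the $\W^\ka(\g)$-module structure, not to identify complexes. Concretely, $\sigma_\cmu$ is an automorphism of the algebra, not a linear map on $C(M)$, so it does not by itself ``intertwine the two differentials'' on a common vector space. What one does get is $(C(M),d_\cmu)\cong(\sigma^*_\cmu M\otimes\sigma^*_\cmu\bigwedge^{\infty/2+\bullet}(\mf n_+),d)$, where the Clifford module is \emph{also} twisted; but $\sigma^*_\cmu\bigwedge^{\infty/2+\bullet}(\mf n_+)$ has its vacuum sitting in a different cohomological degree than the standard one, so passing to $C(\sigma^*_\cmu M)=\sigma^*_\cmu M\otimes\bigwedge^{\infty/2+\bullet}(\mf n_+)$ introduces a nonzero degree shift depending on $\cmu$. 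Thus $H^j_{DS,\cmu}(\mb V_{\lam,\ka})\not\cong H^j_{DS}(\sigma^*_\cmu\mb V_{\lam,\ka})$ in general, and even with the correct shift the known vanishing theorems do not apply to $\sigma^*_\cmu\mb V_{\lam,\ka}$, which lies outside category~$\mc O$.

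Your second and third paragraphs gesture in the right direction but miss the actual mechanism. The paper does run a spectral sequence, but the leading differential is $\wh\Psi_\cmu$ itself (not $d_{\on{st}}$), analyzed after the de Boer--Tjin/\cite{FreBen04} tensor decomposition $C(\mb V_{\lam,\ka})\cong C(\mb V_{\lam,\ka})'\otimes C(\mb V_{\lam,\ka})_0$ that strips off the $\mf n_+$-directions. The genuine content is then an explicit linear-algebra statement: the map $\on{ad}(\sigma_\cmu(p_-)):\bb_-\,t^{-1}\C[t^{-1}]\to\mf n_-\,t^{-1}\C[t^{-1}]$ is surjective, which is where the hypothesis $\cmu\in\check P_+$ enters and which yields the Koszul pairing $\wh I^\al_{n,\cmu}\leftrightarrow\psi^*_{\al,n+1}$ forcing the $\wh\Psi_\cmu$-cohomology to sit in degree~$0$. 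None of this is visible in your sketch; ``the relevant $\mf n_+$-action on the Weyl module is free enough'' is not the right freeness, and the PBW freeness of $\mb V_{\lam,\ka}$ over $U(t^{-1}\g[t^{-1}])$ alone does not give the Koszul collapse. Finally, the semicontinuity remark is a red herring: once the associated graded is literally $\ka$-independent you compute its cohomology directly (as the paper does); invoking semicontinuity from a generic value would go the wrong way, since cohomology dimensions can jump \emph{up} at special $\ka$.
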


%For
%$\lam\in P_+$,
%let $V_{\lam}$ be the irreducible finite-dimensional representation of $\g$ with highest weight $\lam$,
%and set
% $\mathbb{V}_{\lam}^\kappa=U(\affg_\kappa)\otimes_{U(\g[[t]]\+ \C \mathbf{1})}V_{\lam}$,
% the {\em Weyl module} with highest weight $\lam$,
% where $\g[[t]]$ acts on $V_{\lam}$ via the projection $\g[[t]]\ra \g$ and 
% $\mathbf{1}$ acts as the identity.

It is easy to see that if $\cmu \in \check{P} \backslash \check{P}_+$,
then $H^0_{DS,\cmu}(\mathbb{V}_{\lam}^\kappa) = 0$ as well (see
Sect. 18 of \cite{FG}). However, if $\cmu \in \check{P}_+$, then the
$\W^\kappa(\g)$-module $H^0_{DS,\cmu}(\mathbb{V}_{\lam}^\kappa)$ is
non-zero.

Now we introduce our main objects of study in this paper, the modules
\begin{align}
T_{\lam,\cmu}^{\kappa} = H^0_{DS,\cmu}(\mathbb{V}_{\lam,\kappa}),
\qquad \lam \in P_+, \cmu \in \check{P}_+.
\end{align}
Theorem \ref{Th:van} implies a character formula for
$T_{\lam,\cmu}^{\kappa}$ which is independent of $\kappa$ (see Section
\ref{char form}). Because of that, the modules
$T_{\lam,\cmu}^{\kappa}$ may be viewed as specializations to different
values of $\ka$ of a single free $\C[\ka]$-module.

%Note that
%$H^i_{DS,0}(\mathbb{V}_{\lam}^\kappa)=0$ for $i\ne 0$
%for any $\lam$  (\cite{FreBen04,Ara04}),
%and hense the same is true for $H^i_{DS,\cmu}(\mathbb{V}_{\lam}^\kappa)$
%for any $\lam$, $\cmu$.

Switching from $\g$ to $\lg$, we also have the
$\W^{\check{\kappa}}(\lg)$-modules
$$
\check{T}_{\cmu,\lam}^{\check\kappa} =
H^0_{DS,\lam}(\mathbb{V}_{\cmu,\check\kappa}).
$$

The following theorem is the main result of this paper:

\begin{Th}\label{Th:symmetry}
Let $\kappa$ be irrational. Then for any $\lam \in P_+$ and $\cmu
\in \check{P}_+$ there is an isomorphism
\begin{equation}    \label{main1}
T_{\lam,\cmu}^{\kappa}\cong \check{T}_{\cmu,\lam}^{\check{\kappa}}
\end{equation}
of modules over $\W^\kappa(\g) \cong \W^{\check{\kappa}}(\lg)$.
\end{Th}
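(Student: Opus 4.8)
The plan is to realize both sides of \eqref{main1} inside the Feigin--Frenkel free-field (Wakimoto) picture, where the Feigin--Frenkel duality \eqref{eq:FF-duality} becomes manifest, and to identify $T^\ka_{\lam,\cmu}$ with an intersection of kernels of powers of screening operators. Concretely, for irrational $\ka$ the Weyl module $\V_{\lam,\ka}$ coincides with the irreducible module $\V_\lam^\ka$, and applying the twisted reduction functor $H^\bullet_{DS,\cmu}$ to the Wakimoto resolution of $\V_\lam^\ka$ computes $T^\ka_{\lam,\cmu}$ as (the $\Z$-graded cohomology of) a complex built from twisted Fock modules over the Heisenberg algebra $\pi^\ka$. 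By Theorem \ref{Th:van} only the $0$th cohomology survives, so $T^\ka_{\lam,\cmu}$ is the degree-$0$ term, which I expect to be precisely the intersection $\bigcap_{i=1}^r \ker (S_i^{[m_i]})$ of the kernels of suitable powers $m_i = \bra\cmu,\al_i\ket + 1$ of the screening operators $S_i = S_i^\ka$ acting on a twisted Fock module $\pi_{\lam,\ka}$ (the twist by $\cmu$ of the vacuum Fock module). The key computation is that the twist $\sigma_\cmu$ by the spectral flow, combined with Li's delta-operator description \eqref{DeltaLi}, converts the simple screening $S_i$ (originally an integral $\oint \psi^*_{\al_i}(z)\,V_{-\al_i}(z)\,dz$ type residue) into its $\bra\cmu,\al_i\ket$-th "divided power" analogue, exactly as in the $\mf{sl}_2$ computations of Feigin--Frenkel.

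The second, and decisive, step is the symmetry. The screening operators for $\W^\ka(\g)$ and for $\W^{\cka}(\lg)$ act on the \emph{same} Heisenberg vertex algebra $\pi^\ka \cong \pi^{\cka}$ (under the identification $\h^* \cong \lh$), and the Feigin--Frenkel duality \eqref{eq:FF-duality} is precisely the statement that $\W^\ka(\g) = \bigcap_i \ker S_i^\ka$ and $\W^{\cka}(\lg) = \bigcap_i \ker \check S_i^{\cka}$ coincide as subalgebras of $\pi^\ka$. The point I would push is that the \emph{same} identity holds for the twisted modules: the power $m_i = \bra\cmu,\al_i\ket+1$ of the $i$-th screening for $\g$ on the Fock module twisted by $(\lam,\cmu)$ matches, after relabeling, the power $n_i = \bra\lam,\check\al_i\ket+1$ of the $i$-th dual screening for $\lg$ on the Fock module twisted by $(\cmu,\lam)$ — the roles of $\lam$ and $\cmu$, and of $\g$ and $\lg$, are interchanged symmetrically because the starting Heisenberg module is indexed by a pair of weights $(\lam,\cmu) \in \h^* \times \h$ that is itself symmetric under the duality. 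Hence $\bigcap_{i} \ker (S_i^\ka)^{m_i}$ and $\bigcap_{i} \ker (\check S_i^{\cka})^{n_i}$ are literally the same subspace of the same Fock module, giving \eqref{main1}.

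In more detail, the steps in order are: (1) recall the Wakimoto realization and the screening-operator description of $\W^\ka(\g)$, and set up the twisted Fock module $\pi_{\lam,\ka}^{[\cmu]} = \sigma_\cmu^*(\text{Wakimoto module of } \V_{\lam,\ka})$; (2) compute $H^\bullet_{DS,\cmu}(\V_{\lam,\ka})$ via this resolution, using Theorem \ref{Th:van} to collapse it to degree $0$, obtaining $T^\ka_{\lam,\cmu} = \bigcap_i \ker (S_i^{[\cmu]})$ where $S_i^{[\cmu]}$ is the $\bra\cmu,\al_i\ket$-shifted screening; (3) verify via the delta-operator formula \eqref{DeltaLi} that $S_i^{[\cmu]}$ is the $(\bra\cmu,\al_i\ket+1)$-st divided power of the untwisted screening acting on the appropriate vacuum-shifted Fock module $\pi_{\lam+\cmu}^\ka$ (here the integrality $\cmu \in \check P_+$ is what makes the divided power well-defined and nonzero); (4) write down the analogous description of $\check T^{\cka}_{\cmu,\lam}$ and observe that the Feigin--Frenkel identification $\pi^\ka \cong \pi^{\cka}$ carries $\pi^\ka_{\lam+\cmu}$ to $\pi^{\cka}_{\cmu+\lam}$ and carries $(S_i^\ka)^{(\bra\cmu,\al_i\ket+1)}$ to $(\check S_i^{\cka})^{(\bra\lam,\check\al_i\ket+1)}$ up to an overall rescaling that does not affect kernels; (5) conclude $T^\ka_{\lam,\cmu} \cong \check T^{\cka}_{\cmu,\lam}$ as $\W^\ka(\g) \cong \W^{\cka}(\lg)$-modules, since the $\W$-action on each side is by the restriction of the $\pi^\ka$-action and the identification is $\pi^\ka$-linear.

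The main obstacle I anticipate is step (3): proving that applying $H^\bullet_{DS,\cmu}$ to a Wakimoto-type resolution really yields the \emph{divided power} $(S_i)^{(m_i)}$ and not merely the naive power, and that this operator is nonzero on the relevant Fock module — this requires a careful analysis of how the spectral-flow twist interacts with the free-field realization of the screening currents, essentially an $\mf{sl}_2$ reduction to the rank-one case followed by the well-known but delicate fact (Feigin--Fuchs, Felder) that $S_i^{m_i}$ factors through a nontrivial divided-power operator between Fock modules precisely when $m_i \in \Z_{>0}$. A secondary technical point is justifying the convergence/well-definedness of the twisted complex and that the quasi-isomorphism class of the Wakimoto resolution is preserved under $\sigma_\cmu^*$, which I would handle by the same homological arguments (spectral sequence in the charge grading) used in the untwisted case in \cite{FF,FreBen04}.
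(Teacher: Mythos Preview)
Your overall architecture is right --- Wakimoto resolution, apply $H^\bullet_{DS,\cmu}$, land in Fock modules, express $T^\ka_{\lam,\cmu}$ as an intersection of kernels of screening powers, then invoke Feigin--Frenkel duality --- but the mechanism you propose for how the screening \emph{powers} arise is backwards, and this breaks the argument at step (3) and step (4).

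The power of the $i$th screening on the $\g$-side is $\lam_i+1=\langle\lam,\al_i^\vee\rangle+1$, not $\langle\cmu,\al_i\rangle+1$. It does \emph{not} come from the spectral flow twist $\sigma_\cmu$; it comes from the Wakimoto/BGG resolution of $\V_{\lam,\ka}$, whose first differential is a sum of maps $\Wak{\lam}{\ka}\to\Wak{s_i\circ\lam}{\ka}=\Wak{\lam-(\lam_i+1)\al_i}{\ka}$ realized by the operators $S_i(\lam_i+1)$. Applying $H^0_{DS,\cmu}$ sends these to $S^W_i(\lam_i+1)$. The role of the twist $\sigma_\cmu$ is only to shift the Fock module label: $H^0_{DS,\cmu}(\Wak{\nu}{\ka})\cong\pi^\ka_{\nu-\ka\cmu}$ (note the factor $\ka$; your $\pi^\ka_{\lam+\cmu}$ is wrong). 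So one gets
\[
T^\ka_{\lam,\cmu}\cong\bigcap_{i=1}^r\ker_{\pi^\ka_{\lam-\ka\cmu}}S^W_i(\lam_i+1),
\qquad
\check T^{\check\ka}_{\cmu,\lam}\cong\bigcap_{i=1}^r\ker_{\check\pi^{\check\ka}_{\cmu-\check\ka\lam}}\check S^W_i(\cmu_i+1).
\]
The screening powers on the two sides are \emph{different} integers, $\lam_i+1$ versus $\cmu_i+1$, and the Fock modules carry different (but dual) labels.

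This is where your step (4) fails: the Feigin--Frenkel isomorphism $\pi^\ka\cong\check\pi^{\check\ka}$ does \emph{not} carry $S^W_i$ to $\check S^W_i$ or any power of one to a power of the other. The two families of screenings are genuinely different operators on the same Heisenberg space (they are built from vertex operators $:\!e^{-b_i/\ka}\!:$ and $:\!e^{-{}^L b_i/\check\ka}\!:$ respectively), and the equality of their kernel-intersections is exactly the nontrivial content of the theorem, not a formality. The paper's proof supplies the missing ingredient: one splits $\pi^\ka=\pi^\ka_i\otimes\pi^{\ka\perp}_i$, reducing each kernel comparison to a rank-one statement, and then invokes the known Virasoro result (Kac, Feigin--Fuchs, Tsuchiya--Kanie) that for irrational $\ka$ both $\ker_{\pi^\ka_{i,\lam_i-\ka\cmu_i}}S^W_i(\lam_i+1)$ and $\ker_{\check\pi^{\check\ka}_{i,\cmu_i-\check\ka\lam_i}}\check S^W_i(\cmu_i+1)$ are the \emph{same} irreducible Virasoro module, with lowest weight $\Delta^\gamma_{\lam_i,\cmu_i}$ symmetric under $(\lam_i,\gamma)\leftrightarrow(\cmu_i,\gamma^{-1})$. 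That rank-one input is what you are missing.
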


We will also prove the following result:

\begin{Th}    \label{Th:irr}
Let $\kappa$ be irrational. Then $T^\ka_{\lam,\cmu}$ is irreducible
for any $\lam \in P_+$ and $\cmu \in \check{P}_+$.
\end{Th}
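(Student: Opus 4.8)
The plan is to deduce irreducibility of $T^\ka_{\lam,\cmu}$ from the screening operator realization that is used in Section \ref{pr} to prove Theorem \ref{Th:symmetry}, combined with the results of \cite{Ara04,Ara07} on the structure of $\W$-modules obtained by Drinfeld--Sokolov reduction. First I would recall that for irrational $\ka$ one has $\mathbb{V}^\ka_\lam = \mathbb{V}_{\lam,\ka}$, so $T^\ka_{\lam,\cmu} = H^0_{DS,\cmu}(\mathbb{V}^\ka_\lam)$ is the reduction of an \emph{irreducible} $\ghat_\ka$-module. The key input from \cite{Ara04,Ara07} is that the twisted Drinfeld--Sokolov functor $H^\bullet_{DS,\cmu}$, when applied to an irreducible highest weight module at a level that is not rational (hence admissible/non-degenerate behaviour is automatic), sends irreducibles to irreducibles or to zero, and by the non-vanishing remark after Theorem \ref{Th:van} (referring to Sect. 18 of \cite{FG}) the target is non-zero when $\cmu \in \check P_+$. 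So the heart of the argument is to verify that the hypotheses of the relevant exactness/irreducibility theorem of \cite{Ara07} are met in our twisted setting, and to identify precisely which irreducible $\W^\ka(\g)$-module we land on — its highest weight being determined by $\lam$ and $\cmu$ via the Miura/Harish-Chandra parameter computed from the spectral flow $\sigma_\cmu$.

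Concretely I would proceed in the following steps. Step one: use the automorphism $\sigma_\cmu$ to rewrite $H^\bullet_{DS,\cmu}(\mathbb{V}^\ka_\lam)$ as the \emph{untwisted} reduction $H^\bullet_{DS}(\sigma^*_\cmu \mathbb{V}^\ka_\lam)$; since $\sigma_\cmu$ is (a Tits lift of) an extended-affine-Weyl element, $\sigma^*_\cmu \mathbb{V}^\ka_\lam$ is again an irreducible $\ghat_\ka$-module in category $\mathcal{O}$ (at irrational level all such modules are of the form $\mathbb{V}^\ka_\nu$ for suitable $\nu$, possibly after accounting for the grading shift), with highest weight obtained by applying the dot-action of the corresponding Weyl group element to $\lam$. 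Step two: invoke the main theorem of \cite{Ara07} (the $+$-reduction is exact on category $\mathcal{O}$ and kills a module iff a certain integrality/positivity condition on the highest weight fails) to conclude $H^j_{DS}(\sigma^*_\cmu \mathbb{V}^\ka_\lam)=0$ for $j\neq 0$ — recovering Theorem \ref{Th:van} on the nose — and that $H^0$ is either irreducible or zero. Step three: rule out zero using the non-vanishing observation from \cite{FG} for $\cmu\in\check P_+$, so $T^\ka_{\lam,\cmu}$ is irreducible. Step four: compute its highest weight as a $\W^\ka(\g)$-module by tracking the action of the Heisenberg field \eqref{cmuz} through $\sigma_\cmu$ and the Miura map, which simultaneously yields the parameter in a form symmetric under $(\g,\lam,\cmu)\leftrightarrow(\lg,\cmu,\lam)$ and $\ka\leftrightarrow\cka$ — giving the promised independent reproof of Theorem \ref{Th:symmetry}.

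The main obstacle I anticipate is Step two: the results of \cite{Ara04,Ara07} are stated for the untwisted functor and typically for admissible or generic \emph{negative} levels, whereas here the spectral flow $\sigma_\cmu$ can push the relevant highest weight outside the dominant chamber and can shift the module out of the usual category $\mathcal O$ into a ``twisted'' or ``relaxed'' category. So I would need to check carefully that the cohomological exactness and the irreducibility criterion survive this twist — either by a direct spectral-sequence argument (filtering $C(\sigma^*_\cmu \mathbb{V}^\ka_\lam)$ by the Clifford degree, as in \cite{Ara04}, and noting that the $E_1$-differential only involves $\n_+$-homology, which is unaffected by the Cartan-direction part of $\sigma_\cmu$), or by reducing to the untwisted case via a further conjugation. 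A secondary subtlety is making the highest-weight bookkeeping in Step four genuinely match the Feigin--Frenkel duality normalization, so that the isomorphism of $\W$-modules is an isomorphism of modules over $\W^\ka(\g)\cong\W^{\cka}(\lg)$ and not merely an abstract isomorphism of vector spaces with compatible characters; this is where I would lean on the explicit form of the Miura map recalled in Section \ref{pr}.
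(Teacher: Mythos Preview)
Your Step one contains a genuine gap. The identity $H^\bullet_{DS,\cmu}(M)=H^\bullet_{DS}(\sigma^*_\cmu M)$ is not correct as written: the spectral flow $\sigma_\cmu$ also twists the fermionic vacuum of $\bigwedge^{\infty/2+\bullet}(\n_+)$, so passing to the untwisted complex on $\sigma^*_\cmu M$ alone does not reproduce $(C(M),d_\cmu)$. More seriously, even after sorting out degree shifts, the module $\sigma^*_\cmu\mathbb{V}^\ka_\lam$ is \emph{not} in category~$\mathcal O$ for $\cmu\neq 0$: the automorphism $\sigma_\cmu$ sends $e_i t^0$ to $e_i t^{-\cmu_i}$, so the old highest weight vector is no longer annihilated by $\n_+$, and in general there is no highest weight vector at all. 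Your assertion that ``at irrational level all such modules are of the form $\mathbb V^\ka_\nu$'' is false for spectral-flowed modules --- irreducibility does not imply highest weight. Consequently the exactness and irreducibility theorems of \cite{Ara04,Ara07}, which are stated for category~$\mathcal O$, do not apply to $\sigma^*_\cmu\mathbb{V}^\ka_\lam$, and your proposed fix (``the $E_1$-differential only involves $\n_+$-homology, which is unaffected by the Cartan-direction part of $\sigma_\cmu$'') misreads the situation: $\sigma_\cmu$ is not a Cartan shift but a genuine reshuffling of the $e_i$- and $f_i$-modes.

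The paper avoids this entirely. Rather than spectral-flowing the module, it passes through the screening realization \eqref{Tlm} to identify $T^\ka_{\lam,\cmu}$ with the \emph{untwisted} reduction $H^0_{DS}(\mathbb V^\ka_{\lam-\ka\cmu})$ of a \emph{different} irreducible highest weight module --- one with the non-integral highest weight $\lam-\ka\cmu$. This module genuinely lies in category~$\mathcal O$, so Proposition~\ref{irr} (i.e.\ \cite{Ara04,Ara07}) applies directly and yields $T^\ka_{\lam,\cmu}\cong\mathbf L^\ka_{\chi(\lam-\ka\cmu)}$. The bridge between the two reductions is the pair of Wakimoto resolutions (Propositions~\ref{Pro:resolution-weyl} and~\ref{Pro:resolution-for-generic-level-more-general}) together with Lemma~\ref{pi shift}, which shows that both $H^0_{DS,\cmu}(\Wak{\nu}{\ka})$ and $H^0_{DS}(\Wak{\nu-\ka\cmu}{\ka})$ give the same Fock module $\pi^\ka_{\nu-\ka\cmu}$. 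In short: the correct substitute for your Step one is not ``twist the module by $\sigma_\cmu$'' but ``shift the Wakimoto highest weight by $-\ka\cmu$,'' and the screening kernel description is what makes that substitution legitimate.
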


A natural extension of the isomorphism \eqref{main1} with $\lam=0$ and
arbitrary $\cmu \in \check{P}_+$ to the case of the critical level
(i.e. $\ka=0$) has been proved in \cite{FG06}, Theorem 18.3.1(2), and
it is likely to hold for other $\lam \in P_+$ as well. For other
rational values of $\ka$, the isomorphism \eqref{main1} does not hold
for general $\lam$ and $\cmu$, even though the modules
$T_{\lam,\cmu}^{\kappa}$ and $\check{T}_{\cmu,\lam}^{\check{\kappa}}$
have equal characters for all $\ka$, according to the character
formula of Section \ref{char form}. The reason is that for rational
values of $\ka$ these two modules are usually reducible and have
different composition series.

%In the isomorphism of Theorem \ref{Th:symmetry},
%the right-hand-side is the 
%$\W^{\check{\kappa}}(\lg)$-module 
%$H^0_{DS,\lam}(\mathbb{V}_{\cmu}^{\check{\kappa}})$,
%which is regarded as a $\W^\kappa(\g)$-module via the duality
%isomorphism \eqref{eq:FF-duality}.

\medskip

Let us comment on the role of Theorem \ref{Th:symmetry} in Gaitsgory's
work on the quantum geometric Langlands correspondence.

Let $\on{KL}(G)_\kappa$ be the category of $\affg_{\kappa}$-modules on
which $\g[[t]]$ acts locally finitely
and $t\g[t]$ acts locally nilpotently, and let $\on{Whit}(G)_{\kappa}$
be the category of $(\kappa+\kappa_c)$-twisted Whittaker $D$-modules on
the affine Grassmannian $\on{Gr}_G=G\ppart/G[[t]]$.  The fundamental
local equivalence that was conjectured by Gaitsgory and Lurie
and proved by Gaitsgory for irrational $\kappa$ states that there is an
equivalence $$FLE_{\kappa\ra \check{\kappa}}:\on{KL}(G)_\kappa\isomap
\on{Whit}(\LG)_{\check{\kappa}}$$ of chiral categories.  It follows
that there are two functors
\begin{align*}
& \on{KL}(G)_\kappa\otimes  \on{KL}(\LG)_{\check{\kappa}} \ra
\W^\kappa(\g)\on{-mod}
\end{align*}
given by
\begin{align*}
M\otimes N\mapsto H_{DS}^0(M\star FLE_{\check{\kappa}\ra \kappa}(N)),\quad M\otimes N\mapsto
H_{DS}^0(FLE_{\kappa\ra \check{\kappa}}(M)\star N),
\end{align*}
where $\star$ denotes the convolution product (see e.g.\ \cite{FG06}).
Theorem \ref{Th:symmetry} implies that these two functors coincide.
According to Gaitsgory \cite{Gaitsgory:uo}, a Ran space version of
this statement can be used to prove the existence of the quantum
geometric Langlands correspondence $\mathbb{L}_\ka$ discussed in the
Introduction. The isomorphism \eqref{main1} also appeared in a similar
context in \cite{CreGai,FG}.

\section{Proof of Theorem \ref{Th:symmetry}}    \label{pr}

Our proof uses a BGG-type resolution of the Weyl module
$\V_{\lam,\ka}$ with irrational $\ka$ in terms of the Wakimoto
modules. This resolution allows us to express $T^\ka_{\lam,\cmu}$ with
irrational $\ka$ as the intersection of the kernels of powers of the
screening operators acting on particular Fock representations of the
Heisenberg vertex algebra $\pi^\ka \cong \check\pi^{\check\ka}$. More
precisely, we obtain
$$
T^\ka_{\lam,\cmu} = \bigcap_{i=1}^r \on{Ker}_{\pi^\ka_{\lam-\ka\cmu}}
S^W_{i}(\lam_i+1), \qquad \check{T}^{\check\ka}_{\cmu,\la} =
\bigcap_{i=1}^r \on{Ker}_{\check\pi^{\check\ka}_{\cmu-\check\ka\lam}}
\check{S}^W_{i}(\cmu_i+1),
$$
where $r=\on{rank}\g$ and $S^W_{i}(\lam_i+1)$ and
$\check{S}^W_{i}(\cmu_i+1)$ are the operators introduced below. We
then show that
$$
\on{Ker}_{\pi^\ka_{\lam-\ka\cmu}} S^W_{i}(\lam_i+1) =
\on{Ker}_{\check\pi^{\check\ka}_{\cmu-\check\ka\lam}}
\check{S}^W_{i}(\cmu_i+1)
$$
for each $i=1,\dots,r$. The latter statements are independent from
each other for different $i$, and each of them reduces to the rank 1
case, i.e. the case of $\g=\sw_2$. In that case the kernels on both
sides are in fact known to be isomorphic to the same irreducible
representation of the Virasoro algebra \cite{Kac,FeFu,TsuKan86}. This
completes the proof of Theorem \ref{Th:symmetry}. The details are
given in the rest of this section.

In the next two sections we then present some further results.  In
Section \ref{sec:proof}, we use the results of \cite{Ara04,Ara07} to
prove that $T^\ka_{\lam,\cmu}$ is irreducible for all irrational $\ka$
and to identify its highest weight as a ${\mc W}$-algebra module. We
use this fact to give a different proof of Theorem \ref{Th:symmetry},
bypassing the information about representations of the Virasoro
algebra. Then we prove Theorem \ref{Th:van} and compute the characters
of $T^\ka_{\lam,\cmu}$. In Section \ref{higher}, we construct a
BGG-type resolution of the modules for $T^\ka_{\lam,\cmu}$ with
irrational $\ka$ and discuss the $\ka \to \infty$ limit of this
resolution, along the lines of \cite{FF93}.

\subsection{Heisenberg subalgebra}

Let $\kappa_0$ be the invariant bilinear form normalized so that the
square length of the maximal root of $\g$ is equal to $2$; that is,
$\ka_0 = \kappa_\g/2h^{\vee}$, where $h^{\vee}$ is the dual Coxeter
number of $\g$.  From now on, we will view $\kappa$ as a complex
number by identifying it with the ratio $\kappa/\kappa_0$.  Then the
complex numbers $\ka$ and $\check{\kappa}$ are related by the standard
formula:
$$\check{\kappa} = \frac{1}{m\kappa},$$
where $m$ is the lacing number of $\g$, i.e. the maximal number of the
edges in the Dynkin diagram of $\g$.

In what follows, we will use the notation $(\alpha|\beta)$ for
$\kappa_0(\alpha,\beta)$.

Let $\pi^\ka$ be the Heisenberg vertex algebra of level $\kappa$.
It is generated by
fields
$b_i(z)$, $i=1,\dots, r=\on{rank}\g$,
with the OPEs
\begin{align}
b_i(z)b_j(w)\sim \frac{\kappa(\alpha_i|\alpha_j)}{(z-w)^2}.
\label{eq:b_i}
\end{align}
Let $\pi^\ka_{\lam}$ be the irreducible highest weight representation
of $\pi^\ka$ with highest weight $\lam\in \h^*$.

Let $\Wak{\lam}{\ka}=M_{\g}\* \pi^\ka_{\lam}$ be the Wakimoto module
of highest weight $\lam$ and level $\kappa+\ka_c$
(\cite{FeuFre90,Fre05}), where $M_{\g}$ is the tensor product of $\dim
\mf{n}_+$ copies of the $\beta\gamma$ system.

The vacuum Wakimoto module $\Wak{0}{\ka}$ is naturally a vertex
algebra and there is an injective vertex algebra homomorphism
$V^\kappa(\g)\hookrightarrow \Wak{0}{\ka}$ \cite{Fre05}.

We can compute $H^\bullet_{DS}(\Wak{\nu}{\ka})$ by using a spectral
sequence in which the $0$th differential is $d_{\on{st}}$. It follows
from the construction of $\Wak{\nu}{\ka}$ that the $0$th cohomology of
$d_{\on{st}}$ is isomorphic to $\pi^\ka$ and all other cohomologies
vanish. Therefore the spectral sequences collapses and we obtain
$$
H_{DS}^0(\Wak{0}{\ka})\cong \pi^\ka.
$$
In fact, we can write down explicitly the fields in the complex
$(C(\Wak{\nu}{\ka}),d_{\on{st}})$ corresponding to the generating
fields $b_i(z)$ of $\pi^\ka$ \cite{FF} (the factor
$\frac{(\alpha_i|\alpha_i)}{2}$ in front of $h_i(z)$ is due to the
fact that $b_i(z)$ corresponds to the $i$th simple root rather than
coroot):
\begin{equation}    \label{bf b}
{\mathbf b}_i(z) = \sum_{n \in \Z} {\mathbf b}_{i,n} z^{-n-1} =
\frac{(\alpha_i|\alpha_i)}{2} h_i(z) +
\sum_{\al \in \Delta_+} (\alpha|\alpha_i):\! \psi_\al(z)
\psi^*_\al(z) \! :
\end{equation}
The first term contributes $\kappa+\kappa_c$ to the level, and the
second term contributes $-\ka_c$, so the total level is $\ka$, which
is indeed the level of $\pi^\ka$. Note that the field ${\mathbf
  b}_i(z)$ is nothing but the field $\cmu(z)$ given by formula
\eqref{cmuz} with $\cmu=\al_i$ (we identify $\h^*$ with $\h$ using the
inner product $\ka_0$). We have already mentioned the fact that these
fields generate the Heisenberg vertex algebra $\pi^\ka$ in Section
\ref{twist}.

By applying the functor $H_{DS}^0(?)$ to the embedding
$V^\kappa(\g)\hookrightarrow \Wak{0}{\ka}$, we obtain a vertex algebra
homomorphism \cite{FF90,FF}
\begin{align}
\Upsilon:\W^\kappa(\g)\ra H_{DS}^0(\Wak{0}{\ka})\cong \pi^\ka
\label{eq:Miura}
\end{align}
called the {\em Miura map}, which is injective for all $\kappa$ (see
e.g.\ \cite{AraLec}).  In particular, $\W^\kappa(\g)$ may be
identified with the image of the Miura map inside the Heisenberg
vertex algebra $\pi^\ka$. The latter can be described for generic
$\ka$ as the intersection of kernels of the screening operators
\cite{FF}. This fact can actually be taken as a definition of
$\W^\kappa(\g)$, see \cite{FF93}.

The $\mf{n}_+\ppart$-module $M_\g$ admits a right action $x \mapsto
x^R$ of $\mf{n}_+\ppart$ on $M_\g$ that commutes with the left action
of $\mf{n}_+\ppart$ \cite{Fre05}.  As a $U(\mf{n}_+\ppart)$-bimodule,
$M_\g$ is isomorphic to the semi-regular bimodule of $\mf{n}_+\ppart$
\cite{Vor99,A-BGG}, and hence we have the following assertion.

\begin{Pro}[{\cite[Proposition 2.1]{A-BGG}}]\label{Pro:key-iso}
Let $M$ be a $\mf{n}_+\ppart$-module 
that is integrable over $\mf{n}_+[[t]]$.
There is a linear isomorphism
\begin{align*}
\Phi:\Wak{\lam}{\ka}\* M\isomap \Wak{\lam}{\ka}\*  M
\end{align*}
such that
\begin{align*}
\Phi\circ \Delta(x)=(x\*1)\circ \Phi,
\quad \Phi \circ (x^R\*1)=(x^R\*1-1\* x) \circ \Phi\quad \text{for
}x\in \mf{n}_+\ppart.
\end{align*}
Here $x^R$ denotes the right action of $x\in \mf{n}_+\ppart$ on
$\Wak{\cmu}{\ka}$ and
$\Delta$ denotes the coproduct:
$\Delta(x)=x\*1 +1\*x$.\end{Pro}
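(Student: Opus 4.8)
The plan is to set aside the Heisenberg factor, identify the $\beta\gamma$-system with the semi-regular bimodule, and realize $\Phi$ as the standard ``trivialization'' isomorphism, whose defining features are precisely the two displayed intertwining relations. First I would reduce to a statement purely about the semi-regular bimodule. In the Wakimoto realization $\Wak{\lam}{\ka}=M_\g\*\pi^\ka_\lam$ the $\mf{n}_+\ppart$-action inherited from $\affg_\ka$ (the fields $e_\al(z)$), as well as the right action $x\mapsto x^R$, both take place inside the $\beta\gamma$-system $M_\g$; the Fock space $\pi^\ka_\lam$ carries the trivial $\mf{n}_+\ppart$-action on either side and is merely a multiplicity space here. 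Hence it suffices to construct $\Phi$ for $M_\g\* M'$, where $M'=\pi^\ka_\lam\* M$ is again a smooth $\mf{n}_+\ppart$-module integrable over $\mf{n}_+[[t]]$. By the isomorphism recalled just before the statement, $M_\g$ is, as a $U(\mf{n}_+\ppart)$-bimodule, the semi-regular bimodule $\mc{S}=\mc{S}_{\mf{n}_+\ppart}$, so the problem becomes: for $\mc{S}$ and any smooth $\mf{n}_+\ppart$-module $M'$ integrable over $\mf{n}_+[[t]]$, produce a linear isomorphism $\Phi\colon\mc{S}\* M'\isomap\mc{S}\* M'$ with $\Phi\circ\Delta(x)=(x\*1)\circ\Phi$ and $\Phi\circ(x^R\*1)=(x^R\*1-1\*x)\circ\Phi$ for $x\in\mf{n}_+\ppart$.

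Then I would write down $\Phi$. Picturing $\mc{S}$ as a topological completion of the functions on the loop group $N_+\ppart$ --- the left (resp.\ right) $\mf{n}_+\ppart$-action being the differential of right (resp.\ left) translations --- and an element of $\mc{S}\* M'$ as an $M'$-valued function $\mathbf{g}\mapsto F(\mathbf{g})$, the map is, heuristically, $(\Phi F)(\mathbf{g})=\mathbf{g}^{-1}\cdot F(\mathbf{g})$, the pointwise action on the values in $M'$; in Hopf-algebraic terms it is the semi-infinite version of $u\*m\mapsto\sum u_{(1)}\*S(u_{(2)})m$ ($S$ the antipode), with inverse $F\mapsto\bigl(\mathbf{g}\mapsto\mathbf{g}\cdot F(\mathbf{g})\bigr)$. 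Conjugating the diagonal action $\Delta(x)$ by this pointwise translation absorbs the summand acting on $M'$ and leaves only the right-translation vector field on $\mc{S}$, giving $\Phi\circ\Delta(x)=(x\*1)\circ\Phi$; conjugating the right action $x^R$ --- a left-translation vector field, which does not touch $M'$ --- produces the additional $\Ad$-type term $-1\*x$, giving $\Phi\circ(x^R\*1)=(x^R\*1-1\*x)\circ\Phi$. Both computations are the loop-group lifts of the elementary Hopf-algebra identities $\Phi(\Delta(x)(u\*m))=(x\*1)\Phi(u\*m)$ and $\Phi((ux)\*m)=(x^R\*1-1\*x)\Phi(u\*m)$, and in practice one checks them by a direct computation on the topological generators $e_\al t^n$ ($\al\in\Delta_+$, $n\in\Z$) using the explicit formulas for the left and right $\beta\gamma$-actions.

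The main obstacle will be carrying out the previous paragraph rigorously: one must turn $\Phi$ into an honest linear automorphism of the \emph{completed} tensor product underlying the semi-regular bimodule, and verify that the hypothesis ``$M$ integrable over $\mf{n}_+[[t]]$'' is exactly what makes the a priori infinite sums defining $\mathbf{g}^{\pm1}\cdot F(\mathbf{g})$ converge on $M'$-valued functions --- each vector of $M'$ is annihilated by some congruence subalgebra $t^N\mf{n}_+[[t]]$, which truncates the relevant expansions. Granting the topological framework for $\mc{S}_{\mf{n}_+\ppart}$ from the cited references, the two intertwining relations then follow from the classical computation by a mode-by-mode argument, and $\Phi$ is invertible by construction.
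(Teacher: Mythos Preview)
The paper does not give its own proof of this proposition; it is quoted from \cite{A-BGG} (with the structural input, that $M_\g$ is the semi-regular bimodule, attributed to \cite{Vor99}). Your sketch is correct and is essentially the argument in those references: strip off the Heisenberg factor $\pi^\ka_\lam$ (both the left $\mf{n}_+\ppart$-action via the $e_\al(z)$ fields and the right action $x^R$ live entirely in $M_\g$), identify $M_\g$ with the semi-regular bimodule $\mc{S}_{\mf{n}_+\ppart}$, and then use the standard trivialization map whose Hopf-algebraic avatar is $u\otimes m\mapsto \sum u_{(1)}\otimes S(u_{(2)})m$ and whose geometric picture is $(\Phi F)(\mathbf{g})=\mathbf{g}^{-1}\cdot F(\mathbf{g})$. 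Your identification of the role of the hypothesis ``integrable over $\mf{n}_+[[t]]$'' is also right: it is exactly what makes the semi-infinite antipode act as a locally finite sum on $M'$, so that $\Phi$ is a well-defined linear automorphism rather than a formal expression. There is nothing to compare against in the present paper beyond the citation.
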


Now we can describe the $\W^\ka(\g)$-modules
$H^i_{DS,\cmu}(\Wak{\nu}{\ka})$.

\begin{Lem}\label{pi shift}
For any $\nu \in \h^*$, we have
\begin{equation}    \label{HW}
H^i_{DS,\cmu}(\Wak{\nu}{\ka})\cong
  \delta_{i,0}\pi^\ka_{\nu-\ka\check{\mu}}
\end{equation}
as $\W^\ka(\g)$-modules.
\end{Lem}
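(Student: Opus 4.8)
The plan is to compute $H^\bullet_{DS,\cmu}(\Wak{\nu}{\ka})$ via the same spectral sequence used above for $\cmu=0$: filter the complex $C(\Wak{\nu}{\ka})$ so that the $0$th differential is $d_{\on{st}}$ and the next differential involves $\wh\Psi_\cmu$. First I would recall that for the untwisted case the construction of the Wakimoto module gives $H^\bullet(d_{\on{st}}) \cong \pi^\ka$ concentrated in degree $0$, and the key point is that the twisted contraction $\wh\Psi_\cmu$ does not interfere with this collapse: it still acts trivially on the $d_{\on{st}}$-cohomology except through a shift of the Heisenberg weight. Concretely, I would use the explicit description of the $\W$-algebra action coming from Section \ref{twist}: the $\sigma_\cmu$-twisted $C(V^\ka(\g))$-module structure on $\sigma^*_\cmu C(\Wak{\nu}{\ka})$ is implemented by Li's delta operator $\Delta(\cmu,z)$ attached to the field \eqref{cmuz}, which is exactly $\sum_i \cmu_i \mathbf b_i(z)/(\alpha_i|\alpha_i) \cdot (\text{normalization})$ — i.e. a Heisenberg field. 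Twisting a Fock module by a delta operator built from a Heisenberg current shifts its highest weight, so $\sigma^*_\cmu$ sends $H^0_{DS}(\Wak{\nu}{\ka}) \cong \pi^\ka_\nu$ (as a $\pi^\ka$-module, reading off the $b_i$-eigenvalue from \eqref{bf b}) to $\pi^\ka_{\nu - \ka\cmu}$, and kills the higher cohomology since those already vanish.

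The cleanest route, though, is probably to reduce the twisted computation to an untwisted one using the spectral flow automorphism $\sigma_\cmu$ itself. Since $H^\bullet_{DS,\cmu}(M)$ is by definition the semi-infinite cohomology $H^{\infty/2+\bullet}(\mf n_+\ppart, M\otimes \C_{\Psi_\cmu})$, and since $\sigma_\cmu$ on $\affg_\ka \otimes \on{Cl}_{\n_+}$ intertwines $d$ with $d_\cmu$ while carrying $\Psi$ to $\Psi_\cmu$, we get a canonical identification
\begin{equation*}
H^\bullet_{DS,\cmu}(M) \cong H^\bullet_{DS}(\sigma^*_\cmu M)
\end{equation*}
of vector spaces, with the $\W^\ka(\g)$-action on the left matching the twisted $\W^\ka(\g)$-action on the right. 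So I would first determine $\sigma^*_\cmu \Wak{\nu}{\ka}$ as a Wakimoto module. On the $\beta\gamma$-system factors the spectral-flow shift of modes $\psi_{\al,n}\mapsto \psi_{\al,n-\cmu_i}$ (via the analogous Clifford/Weyl-algebra twist built into the Wakimoto realization) just permutes a dense-graded Fock space and returns an isomorphic $\beta\gamma$-system, while on the Heisenberg factor $\pi^\ka_\nu$ the shift $h_i t^0 \mapsto h_i t^0 - (\ka+\ka_c)(e_i,f_i)\cmu_i$ in $\sigma_\cmu$ translates, after passing to the currents $\mathbf b_i(z)$ of \eqref{bf b}, into $\pi^\ka_\nu \rightsquigarrow \pi^\ka_{\nu-\ka\cmu}$ — the factor $\ka$ rather than $\ka+\ka_c$ appearing because the $\beta\gamma$ contribution $-\ka_c$ cancels the critical shift, exactly as remarked after \eqref{bf b}. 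Hence $\sigma^*_\cmu \Wak{\nu}{\ka} \cong \Wak{\nu - \ka\cmu}{\ka}$ up to the innocuous relabeling on $M_\g$, and applying the known computation $H^\bullet_{DS}(\Wak{\mu}{\ka}) \cong \delta_{\bullet,0}\pi^\ka_\mu$ finishes the proof.

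I expect the main obstacle to be bookkeeping the normalizations and the weight shift precisely: one must check that the $\h^*$-shift produced by $\sigma_\cmu$ is $-\ka\cmu$ and not $-(\ka+\ka_c)\cmu$ or $-\ka\cmu/m$ or some such, and that the identification $\h^*\cong\h$ used throughout (via $\ka_0$) is applied consistently, so that $\cmu\in\check P$ is being read as the element of $\h$ that pairs correctly with the simple roots in \eqref{eq:b_i}. This is where the careful statement of Li's delta operator \eqref{DeltaLi}–\eqref{cmuz} and the parenthetical remark about $\frac{(\alpha_i|\alpha_i)}{2}h_i(z)$ versus the coroot normalization in \eqref{bf b} have to be tracked together. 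A secondary, more structural point to verify is that the spectral-flow twist really does preserve the class of Wakimoto modules (i.e.\ that $\sigma^*_\cmu$ of a $\beta\gamma$-system Fock space is again such a Fock space) rather than some completion or dual; this follows from Proposition \ref{Pro:key-iso} together with the fact that the relevant modules are smooth and the mode shifts are by a fixed finite amount, but it deserves an explicit sentence. Once these normalizations are pinned down, the collapse of the spectral sequence and the vanishing of $H^i$ for $i\neq 0$ are immediate consequences of the corresponding untwisted facts.
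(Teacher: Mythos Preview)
Your first paragraph sketches a correct argument: the spectral sequence with $E_0$-differential $d_{\on{st}}$ collapses at $E_1 = \delta_{\bullet,0}\,\pi^\ka_\nu$ (the $d_1$ differential, induced by $\wh\Psi_\cmu$, lands in $E_1^1 = 0$), and the $\W^\ka(\g)$-module structure is then read off from the $\sigma_\cmu$-twist of the Heisenberg fields $\mathbf b_i(z)$, giving the shift $\nu\mapsto\nu-\ka\cmu$. The paper takes a slightly different route for the vector-space step, invoking Proposition~\ref{Pro:key-iso} (the semi-regular bimodule property of $M_\g$) to trade $H^{\infty/2+\bullet}(\mf{n}_+\ppart,\Wak{\nu}{\ka}\otimes\C_{\Psi_\cmu})$ directly for $H^{\infty/2+\bullet}(\mf{n}_+\ppart,\Wak{\nu}{\ka})\otimes\C_{\Psi_\cmu}$; the weight computation is then exactly the one you describe.

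Your second route---the one you call ``cleanest''---has a genuine gap. The identification $H^\bullet_{DS,\cmu}(M)\cong H^\bullet_{DS}(\sigma^*_\cmu M)$ does not hold as stated: the automorphism $\sigma_\cmu$ that carries $d$ to $d_\cmu$ acts simultaneously on $\affg_\ka$ \emph{and} on $\on{Cl}_{\n_+}$, so it identifies $(C(M),d_\cmu)$ with the complex built on $\sigma^*_\cmu M\otimes\sigma^*_\cmu\bigwedge^{\infty/2+\bullet}(\mf n_+)$, not with $C(\sigma^*_\cmu M)=\sigma^*_\cmu M\otimes\bigwedge^{\infty/2+\bullet}(\mf n_+)$. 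The twisted ghost Fock space has its annihilation conditions \eqref{ann} shifted by $\langle\cmu,\alpha\rangle$ and is not isomorphic to the untwisted one as a graded Clifford module. Relatedly, the claim that the spectral-flow twist of the $\beta\gamma$ factor ``returns an isomorphic $\beta\gamma$-system'' is false: for $\cmu\neq 0$ the twist $\sigma^*_\cmu M_\g$ is a $\beta\gamma$ Fock space of nonzero charge, so $\sigma^*_\cmu\Wak{\nu}{\ka}$ is \emph{not} a standard Wakimoto module $\Wak{\nu'}{\ka}$ for any $\nu'$. (Already for $\g=\mf{sl}_2$: in $\Wak{\nu'}{\ka}$ one has $e_n\cdot|0\rangle=0$ for $n\geq 0$, whereas in $\sigma^*_\cmu\Wak{\nu}{\ka}$ the vacuum is killed by $e_n$ only for $n\geq\langle\cmu,\alpha\rangle$.) So the reduction to the untwisted computation does not go through; stick with your first argument, or use Proposition~\ref{Pro:key-iso} as the paper does.
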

\begin{proof}
By applying Proposition \ref{Pro:key-iso}
for $M=\C_{\Psi_{\cmu}}$,
we obtain a vector space isomorphism
\begin{align*}
H^{\infty/2+i}(\mf{n}_+\ppart,\Wak{\nu}{\ka}\*
\C_{\Psi_{\cmu}})\overset{\Phi}{\isomap}
H^{\infty/2+i}(\mf{n}_+\ppart, \Wak{\nu}{\ka})\*\C_{ \Psi_{\cmu}}\\
\cong H^{\infty/2+it}(\mf{n}_+\ppart, \Wak{\nu}{\ka})
\cong \delta_{i,0}\pi^\ka_{\nu}.
\end{align*}
According to the definition of the action of $\W^\ka(\g)$ on
$H^i_{DS,\cmu}(?)$ given in Section \ref{twist}, to obtain the
structure of a module over $\W^\ka(\g)$ we need to apply the
automorphism $\sigma_{\cmu}$ to the fields ${\mathbf b}_i(z)$ defined
by formula \eqref{bf b}. Under $\sigma_{\cmu}$, all ${\mathbf
  b}_{i,n}$ with $n \neq 0$ are invariant but ${\mathbf b}_{i,0}$ gets
shifted by $-\kappa \cmu_i$, where $\cmu_i = \bra
\cmu,\al_i^\vee\ket$. Indeed, $h_{i,0}$ gets shifted by
$-(\ka+\ka_c)\cmu_i$, and the $z^{-1}$-Fourier coefficient of the
fermionic term of \eqref{bf b} gets shifted by $\ka_c \cmu_i$. As the
result, we obtain that $H^0_{DS,\cmu}(\Wak{\nu}{\ka}) \cong
\pi^\ka_{\nu-\ka\check{\mu}}$.
\end{proof}

\subsection{Screening operators}

For each $i=1,\dots, r$,
the screening operator
$$
S_i(z): \Wak{\nu}{\ka}\ra \Wak{\nu-\alpha_i}{\ka}
$$
is defined in \cite{FF:wak,Fre05} by the formula
\begin{align}    \label{Si}
S_{i}(z)= e_i^R(z):e^{\int -\frac{1}{\kappa}b_i(z)dz}:,
\end{align}
where
\begin{align}
&:\! e^{\int -\frac{1}{\kappa}b_i(z)dz} \! : \; =
\label{eq:eb}
\\&T_{-\alpha_i} z^{-\frac{b_{i,0}}{\kappa}}
\exp\left(-\frac{1}{\kappa}\sum_{n<0}\frac{b_{i,n}}{n}z^{-n}\right)
\exp\left(-\frac{1}{\kappa}\sum_{n>0}\frac{b_{i,n}}{n}z^{-n}\right).
\nonumber
\end{align}
Here
$z^{-\frac{b_{i,0}}{\kappa}}=\exp(-\frac{b_{i,0}}{\kappa}\log z)$
and $T_{-\alpha_i}$ is the translation operator
$\pi^\ka_{\nu}\ra \pi^\ka_{\nu-\alpha_i}$
sending the highest weight vector to the highest weight vector and
commuting with all $b_{j,n}$, $n\ne 0$.

Let $\nu\in P$ be such that
\begin{align}
(\nu|\alpha_i)+m\kappa=\frac{(\alpha_i|\alpha_i)}{2}(n-1)
\label{eq:weight}
\end{align}
for some $n\in \Z_{\geq 0}$ and $m\in \Z$.
We have
\begin{align*}
&S_i(z_1)S_i(z_2)\dots S_i(z_{n})|_{\Wak{\nu}{\ka}}\\
&=\prod_{i=1}^{n}z_i^{-\frac{(\nu|\alpha_i)}{\kappa}}
\prod_{1\leq i<j\leq n}(z_i-z_j)^{\frac{(\alpha_i|\alpha_i)}{\kappa}}
:\! S_i(z_1)S_i(z_2)\dots S_i(z_n) \! :
\end{align*}
Let $\mc{L}^*_{n}(\nu,\kappa)$ be the local system with coefficients
in $\C$ associated to the monodromy group of the multi-valued function
$$\prod_{i=1}^{n}z_i^{-\frac{(\nu|\alpha_i)}{\kappa}}
\prod_{1\leq i<j\leq n}(z_i-z_j)^{\frac{(\alpha_i|\alpha_i)}{\kappa}}$$
on the manifold
$Y_{n}=\{(z_1,\dots, z_{n})\in (\C^*)^{n}\mid z_i\neq z_j\}$,
and denote by $\mc{L}_{n}(\nu,\kappa)$ the dual local system of
$\mc{L}^*(\nu,\kappa)$
(\cite{AomKit11}).
Then, for an element $\Gamma\in H_{n}(Y_{n},\mc{L}_n(\nu,\kappa))$,
\begin{align}
S_i(n,\Gamma):=\int_{\Gamma} S_i(z_1)S_i(z_2)\dots S_i(z_n) dz_1\dots
dz_n:\Wak{\nu}{\ka}\ra \Wak{\nu-n\alpha_i}{\ka}
\label{eq:intertwiner2}
\end{align}
defines a
$\affg$-module homomorphism.

\begin{Th}[\cite{TsuKan86}]\label{Th:Tsuhiya-Kanie}
Suppose that
$$\frac{2d(d+1)}{\kappa(\alpha_i|\alpha_i)}\not\in \Z,\quad
\frac{2d(d-n)}{\kappa(\alpha_i|\alpha_i)}\not\in \Z,
$$
for all $1\leq d\leq n-1$.
Then there exits a cycle
$\Gamma\in H_{n}(Y_{n},\mc{L}_n(\nu,\kappa))$ such that  
$S_i(n,\Gamma)$ % \eqref{eq:intertwiner2}
is non-zero.
\end{Th}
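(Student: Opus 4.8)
The plan is to reduce the existence of a non-vanishing cycle to a statement about the twisted homology of the configuration space $Y_n$ and the non-vanishing of a Selberg-type integral. The point is that the composition $S_i(z_1)\dots S_i(z_n)$ restricted to $\Wak{\nu}{\ka}$ factors through a scalar multi-valued function times a normally-ordered vertex operator $:\!S_i(z_1)\dots S_i(z_n)\!:$, so that pairing with a cycle $\Gamma\in H_n(Y_n,\mc{L}_n(\nu,\kappa))$ produces a $\affg$-module map whose leading matrix coefficient, against appropriate highest-weight vectors, is precisely a hypergeometric integral of the form $\int_\Gamma \prod_i z_i^{-(\nu|\alpha_i)/\ka}\prod_{i<j}(z_i-z_j)^{(\alpha_i|\alpha_i)/\ka}\,dz_1\cdots dz_n$ (up to an extra rational factor coming from the normally-ordered part evaluated on the vacuum). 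Thus it suffices to exhibit one cycle on which this integral is non-zero.

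The main steps I would carry out are the following. First, reduce to the rank-1 situation: all the operators $S_i(z_k)$ and the relevant function involve only the $i$-th simple root, so one works inside the Heisenberg subalgebra generated by $b_i$ and the single $\beta\gamma$-system attached to $\alpha_i$; effectively $\g=\sw_2$ and one sets $p = (\alpha_i|\alpha_i)/\ka$, $a = -(\nu|\alpha_i)/\ka$. Second, identify the pairing of $S_i(n,\Gamma)$ with highest-weight vectors as a one-dimensional (in each variable) twisted-cycle integral: the $\beta\gamma$-contribution from $e_i^R(z)$ acting on the Weyl-module vacuum contributes a factor that is a nonzero rational function of the $z_k$, regular and nonvanishing generically on $Y_n$, so it does not affect the homology class computation. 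Third, invoke the standard theory of twisted homology of $Y_n$ with coefficients in $\mc{L}_n(\nu,\kappa)$: under the stated genericity hypotheses on $p$ and $a$ — which is exactly what the conditions $2d(d+1)/(\ka(\alpha_i|\alpha_i))\notin\Z$ and $2d(d-n)/(\ka(\alpha_i|\alpha_i))\notin\Z$ for $1\le d\le n-1$ encode — the local system is ``non-resonant'' along the relevant diagonals and boundary strata, so $H_n(Y_n,\mc{L}_n(\nu,\kappa))$ is concentrated in middle degree and is spanned by the classes of iterated (Pochhammer-type) regularized cycles. Fourth, evaluate the integral on the cycle coming from the product of small loops / the simplex $\{0<z_n<\dots<z_1<1\}$ regularized à la Pochhammer: this is a Selberg integral, and its value is an explicit product of $\Gamma$-functions whose arguments are linear in $p$ and $a$. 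Fifth, check that under precisely the stated hypotheses none of these $\Gamma$-factors sits at a pole and no factor in the numerator forces a zero, so the Selberg integral is nonzero; hence $S_i(n,\Gamma)\ne 0$ for that $\Gamma$.

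The delicate part — and where I would expect the real work — is matching the two sets of hypotheses: showing that the inequalities stated in the theorem are exactly the conditions that (a) make the twisted homology $H_n(Y_n,\mc{L}_n(\nu,\kappa))$ behave well (non-resonance, so that a regularizable cycle exists) and (b) keep the resulting Selberg $\Gamma$-factors away from their poles and zeros. One has to track carefully which linear combinations of $p$ and $a$ appear: the diagonal exponents $(z_i-z_j)^p$ produce factors like $\Gamma(dp)$ and $\Gamma(\text{const}+dp)$ for $1\le d\le n-1$, while the boundary exponents $z_i^{a}$ together with the accumulation of diagonal contributions produce factors involving $a+(d-1)p$, and translating $a,p$ back into $\nu,\ka,(\alpha_i|\alpha_i)$ must reproduce exactly $2d(d+1)/(\ka(\alpha_i|\alpha_i))$ and $2d(d-n)/(\ka(\alpha_i|\alpha_i))$. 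This is a bookkeeping argument, standard in the Tsuchiya–Kanie and Schechtman–Varchenko framework, but it is the step that requires genuine care; everything else is an appeal to the established theory of hypergeometric integrals and twisted homology of hyperplane-arrangement complements.
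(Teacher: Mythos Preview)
The paper does not give its own proof of this theorem; it is stated as a result of Tsuchiya--Kanie \cite{TsuKan86} and used as a black box (the sentence immediately following the statement points to \cite{SV,Varchenko} for the more general fact that $H_n(Y_n,\mc{L}_n(\nu,\kappa))$ is one-dimensional for irrational $\ka$). Your proposal is a faithful outline of the original Tsuchiya--Kanie argument: reduce the leading matrix coefficient of $S_i(n,\Gamma)$ to a Selberg-type hypergeometric integral over a twisted cycle, and verify that the stated non-integrality hypotheses are exactly the non-resonance conditions guaranteeing both the existence of a regularizable cycle and the non-vanishing of the resulting product of $\Gamma$-values.

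One small point worth flagging: Tsuchiya--Kanie work directly with Fock-space intertwiners for the Virasoro algebra, so the operator in their paper is effectively $S_i^W(z)$ rather than $S_i(z)=e_i^R(z)\,:\!e^{-\int b_i/\ka}\!:$. Your step of peeling off the $\beta\gamma$-contribution from $e_i^R$ as a single-valued, generically nonvanishing factor is precisely what bridges the Wakimoto setting to theirs, but it is not quite automatic: you need to choose the target vector in $\Wak{\nu-n\alpha_i}{\ka}$ so that the $M_\g$-part of the matrix coefficient is a nonzero polynomial (e.g.\ pair against $\prod_k\gamma_{-1}$ applied to the vacuum of $M_\g$, or argue via the degree filtration on $M_\g$). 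Once that is made explicit, the rest of your sketch matches the standard argument in \cite{TsuKan86}.
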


In fact, it follows from more general results in \cite{SV,Varchenko}
(see \cite{FF93} for a survey) that for irrational $\ka$ the
cohomology group $H_{n}(Y_{n},\mc{L}_n(\nu,\kappa))$ is
one-dimensional. We will choose once and for all its generator
$\Gamma$ and will write $S_i(n)$ for the corresponding operator
$S_i(n,\Gamma)$.

The following theorem was proved for $\lam=0$ in \cite{FF}, and for
general $\lam \in P_+$ in \cite{ACL17}.

\begin{Pro}\label{Pro:resolution-weyl}
Let $\kappa$ be irrational and $\lam\in P_+$.
Then there exists a resolution $C^\bullet_\lam$
of the Weyl module $\mathbb{V}_\lam^\kappa = \mathbb{V}_{\lam,\kappa}$
of the form
\begin{align}    \label{resWak}
&0\ra \mathbb{V}_\lam^\kappa \ra C^0_\lam\overset{d^0_\lam}{\ra} C^1\ra\dots \ra
C^n\ra 0,\\ \notag
& C^j_\lam=\bigoplus_{w\in W\atop \ell(w)=j}\Wak{w\circ \lam}{\ka}, \qquad
w \circ \lam = w(\lam+\rho)-\rho,
\end{align}
with the differential $d^0_\lam$ given by
\begin{equation}    \label{d0}
d^0_\lam=\sum_{i=1}^{r} c_i S_i(\lam_i+1)
\end{equation}
for some $c_i \in \C$ with $\lam_i=\bra \lam,\alpha_i^\vee\ket$.
\end{Pro}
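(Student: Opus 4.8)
The plan is to construct the resolution \eqref{resWak} by transporting the classical Bernstein--Gelfand--Gelfand resolution of the finite-dimensional module $V_\lam$ over $\g$ to the level of Wakimoto modules via the Drinfeld--Sokolov/Wakimoto formalism. First I would recall that over $\g$ there is a BGG resolution $0 \to V_\lam \to \bigoplus_{\ell(w)=0} M_{w \circ \lam} \to \cdots \to \bigoplus_{\ell(w)=n} M_{w \circ \lam} \to 0$, where $M_\mu$ is the Verma module of highest weight $\mu$, and the differentials are built from the standard embeddings $M_{w' \circ \lam} \hookrightarrow M_{w \circ \lam}$ whenever $w \lessdot w'$ in the Bruhat order. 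The affine analogue, due to Feigin--Frenkel \cite{FF} for $\lam = 0$ and extended by Arakawa--Creutzig--Linshaw \cite{ACL17} for general $\lam \in P_+$, replaces each Verma module by the corresponding Wakimoto module $\Wak{w \circ \lam}{\ka}$, and replaces the Verma embeddings by the screening intertwiners $S_i(n, \Gamma)$ of \eqref{eq:intertwiner2}: for irrational $\ka$ the relevant local-system homology $H_n(Y_n, \mc{L}_n(\nu,\ka))$ is one-dimensional (as recalled after Theorem \ref{Th:Tsuhiya-Kanie}), so up to scalar there is a unique such intertwiner, and the Tsuchiya--Kanie nonvanishing (Theorem \ref{Th:Tsuhiya-Kanie}, whose hypotheses hold automatically when $\ka$ is irrational) guarantees it is nonzero.

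The key steps, in order, are: (1) Observe that for irrational $\ka$ the Wakimoto module $\Wak{\nu}{\ka}$ has the same formal character as the Verma module over $\affg_\ka$ of highest weight $\nu$, and that the contragredient Wakimoto modules realize the Verma modules, so that the combinatorics of $\Hom$-spaces between the $\Wak{w \circ \lam}{\ka}$ mirrors that between affine Vermas; in particular each arrow in the would-be complex is a one-dimensional space spanned by $S_i(\lam_i + 1)$ for the appropriate simple reflection, where the exponent $\lam_i + 1 = \bra \lam + \rho, \al_i^\vee \ket$ is exactly the one making the weight condition \eqref{eq:weight} hold. (2) Fix the scalars $c_i$ so that consecutive compositions $d^{j+1}_\lam \circ d^j_\lam$ vanish; this is possible because on each rank-one ``square'' the relevant Serre-type relation among screenings forces the two composite paths to be proportional with opposite signs, exactly as in the finite BGG resolution — this is the step where one cites/reproduces the commutation identities for the $S_i$ from \cite{FF,FF93}. (3) Prove exactness: take the associated graded with respect to a suitable filtration (e.g. by the $\germ{n}_+\ppart$-degree coming from the $\beta\gamma$-system structure of $M_\g$, or equivalently run the spectral sequence whose zeroth differential is $d_{\on{st}}$ as already used in the computation of $H^\bullet_{DS}(\Wak{\nu}{\ka})$). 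Under $\gr$ the complex of Wakimoto modules degenerates to (a semi-infinite / Fock-space version of) the classical BGG complex for $V_\lam$ tensored with a Fock space, whose exactness is the classical theorem; exactness then lifts back by a standard spectral-sequence argument. (4) Finally check that the edge term of the resolution is indeed $\mathbb{V}_\lam^\ka$: the $0$th cohomology is the joint kernel of the $S_i(\lam_i+1)$ on $\Wak{\lam}{\ka}$, which by the screening-operator description of integrable modules (again \cite{FF,ACL17}) equals the integrable quotient, i.e. $\mathbb{V}_\lam^\ka = \mathbb{V}_{\lam,\ka}$ since $\ka$ is irrational.

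The main obstacle I expect is step (2)–(3): pinning down the precise scalars $c_i$ and the signs so that $d^2 = 0$, and then establishing exactness in positive cohomological degree. The subtlety is that the screening intertwiners $S_i(n)$ are integrals over homology classes of rank-one local systems, so the ``Serre relations'' $d^2 = 0$ are statements about compositions of such multivalued integrals; one must either invoke the general hypergeometric machinery of \cite{SV,Varchenko} identifying these $\Hom$-spaces and their composition maps with their classical-Lie-theory counterparts, or argue indirectly by a character/Euler-characteristic count plus the known nonvanishing to force exactness. Since the statement is attributed to \cite{FF,ACL17}, the cleanest route is to reduce everything — via the duality $\Wak{\nu}{\ka}{}^\vee \cong$ affine Verma and the one-dimensionality of the screening $\Hom$-spaces for irrational $\ka$ — to the already-established BGG resolution of $\mathbb{V}_{\lam,\ka}$ by affine Verma modules, and then apply the Wakimoto functor, which is exact on the relevant category and sends affine Vermas to Wakimoto modules while carrying Verma embeddings to the operators $c_i S_i(\lam_i+1)$.
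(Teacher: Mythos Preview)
Your closing remark points in the right direction, but the paper's argument is both simpler and slightly different from what you outline, and it bypasses your steps (2)--(3) entirely. The key point is that for irrational $\ka$ and $\nu\in P$ one has an isomorphism
\[
\Wak{\nu}{\ka}\;\cong\;M^{*\ka}_{\nu},
\]
where $M^{*\ka}_{\nu}$ is the $\ghat_\ka$-module \emph{induced from the finite contragredient Verma module} $M^*_{\nu}$ over $\g$ (not an affine Verma module). This is proved directly: the degree-$0$ part of $\Wak{\nu}{\ka}$ is $M^*_{\nu}$, giving a canonical map $M^{*\ka}_{\nu}\to\Wak{\nu}{\ka}$; a positive-degree singular vector in its kernel would force one in the affine Verma $M^{\ka}_{\nu}$, which is excluded for $\nu\in P$ and $\ka$ irrational; equality of characters then yields the isomorphism. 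Once this identification is made, the desired complex is nothing but the \emph{induction to $\ghat_\ka$ of the finite (contragredient) BGG resolution of $V_\lam$}. Both $d^2=0$ and exactness are therefore inherited from the classical finite-dimensional statement and require no separate verification via screening relations or spectral sequences. The only remaining task is to identify $d^0_\lam$: the Hom-space $\Hom_{\ghat}(M^{*\ka}_{\lam},M^{*\ka}_{s_i\circ\lam})\cong\Hom_{\g}(M^{*}_{\lam},M^{*}_{s_i\circ\lam})$ is one-dimensional, and $S_i(\lam_i+1)$ is a nonzero element of it by Theorem~\ref{Th:Tsuhiya-Kanie}, so the induced differential must be $\sum_i c_i\,S_i(\lam_i+1)$.

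Two specific issues with your plan as written: your step (3) is too vague to stand on its own --- you do not specify a filtration whose associated graded is visibly the classical BGG complex, and the candidate you name (the $\beta\gamma$-degree) does not obviously do this --- and your ``cleanest route'' invokes a BGG resolution of $\mathbb{V}_{\lam,\ka}$ by \emph{affine} Verma modules, which would be indexed by the affine Weyl group and is not the object appearing in the proposition. The resolution here is finite, indexed by the finite Weyl group, precisely because it is induced from the finite BGG resolution.
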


\begin{proof}
We recall the proof for completeness. Let $M^*_\nu$ be the
contragradient Verma module over $\g$ with highest weight $\nu \in
\h^*$. Let $M^{*\ka}_\nu$ be the corresponding induced $\ghat$-module
of level $\ka$. From the explicit construction of the Wakimoto module
$\Wak{\nu}{\ka}$ (see \cite{Fre05}) it follows that the degree
0 subspace of $\Wak{\nu}{\ka}$ (with respect to the Sugawara operator
$L_0$ shifted by a scalar so that the highest weight vector has degree
0), is isomorphic to $M^*_\nu$ as a $\g$-module. Therefore we have a
canonical homomorphism $M^{*\ka}_\nu \to \Wak{\nu}{\ka}$ which is an
isomorphism on the degree 0 subspaces.

If this homomorphism were not injective, then its kernel would contain
a singular vector of strictly positive degree. Consider then the
canonical homomorphism from $M^{*\ka}_\nu$ to the contragradient
module of the Verma $\ghat$-module $M^\ka_\nu$, which induces an
isomorphism of degree $0$ subspaces. The presence of such a singular
vector in $M^{*\ka}_\nu$ implies that the Verma module $M^\ka_\nu$
would also contain a singular vector of positive degree. However, if
$\nu \in P$ and $\ka$ is irrational, it is known that there are no
such singular vectors in $M^\ka_\nu$. Therefore we find that in this
case the homomorphism $M^{*\ka}_\nu \to \Wak{\nu}{\ka}$ is injective.
Since these two $\ghat$-modules have the same character, we obtain
that $M^{*\ka}_\nu \cong \Wak{\nu}{\ka}$ if $\nu \in P$ and $\ka$ is
irrational.

Now let $\lam \in P_+$. Then we have the contragradient BGG resolution
$C^\bullet_\la(\g)$ of the irreducible $\g$-module $V_\la$ with
highest weight $\lam$ such that
$$
C^j_\la(\g)=\bigoplus_{w\in W\atop \ell(w)=j} M^*_{w\circ \lam}.
$$
Let $C^j_\la(\ghat)$ be the induced resolution of $\ghat$-modules of
level $\ka$. Then for irrational $\ka$ we have
$$
C^j_\la(\ghat)=\bigoplus_{w\in W\atop \ell(w)=j} M^{*\ka}_{w\circ
  \lam} \simeq \bigoplus_{w\in W\atop \ell(w)=j} \Wak{w\circ
  \lam}{\ka}.
$$
In particular, the 0th differential $d^0_\la: C^0_\la(\ghat) \to
C^1_\la(\ghat)$ is the sum of non-zero homomorphisms $\phi_i: M^{*\ka}_\lam
\to M^{*\ka}_{\lam-(\lam_i+1)\al_i}$, or equivalently, $\Wak{\lam}{\ka} \to
\Wak{-(\lam_i+1)\al_i}{\ka}$. Since
$\Hom_{\ghat}(M^{*\ka}_\lam,M^{*\ka}_{\lam-(\lam_i+1)\al_i}) \cong
\Hom_{\g}(M^{*\ka}_\lam,M^{*\ka}_{\lam-(\lam_i+1)\al_i})$ is one-dimensional,
and $S_i(\lam_i+1)$ is a non-zero homomorphism $\Wak{\lam}{\ka} \to
\Wak{-(\lam_i+1)\al_i}{\ka}$ by Theorem \ref{Th:Tsuhiya-Kanie}, we
find that $d^0_\la$ is given by formula \eqref{d0}.
\end{proof}

%\begin{Pro}\label{Pro:resolution-for-generic-level}
%Let $k$ be generic
%and $\lam \in P_+$.
%There exists a resolution 
%of $V_k(\cmu)$ of the form
%\begin{align*}
%&0\ra V_k(\cmu)\ra C^0\overset{d_0}{\ra} C^1\ra\dots \ra C^n\ra 0,
%& C^i=\bigoplus_{w\in W\atop \ell(w)=i}F^k(w\circ \lam).
%\end{align*}
%Moreover,
%if $\lam=0$,
%$$d_0=\sum_{i=1}^{r} \int_{\Gamma_i} :S_i(z_1)S_i(z_2)\dots S_i(z_{n_i}):dz_1\dots dz_{n_i}$$
%with $n_i=\bra \lam+\rho,\alpha_i^\vee\ket$
%and $\Gamma_i$ is the cycle constructed in \cite{TsuKan86}.
%\end{Pro}
%\begin{proof}
%Let
%$0\ra E_{\lam}\ra \bar C^0\overset{d_0}{\ra} \bar C^1\ra\dots \ra \bar C^n\ra 0$
%be the dual of the BGG resolution of $E_{\lam}$.
%As $k$ is generic,
%the induction functor $U(\affg)\*_{U(\g[t]\+ \C K)}?$
%gives the first assertion by Proposition \ref{Pro:generic-Wakimoto}.
%The second assertion follows from 
%the non-triviality of the map
%$ \int_{\Gamma_i} :S_i(z_1)S_i(z_2)\dots S_i(z_{n_i}):dz_1\dots dz_{n_i}$
%and
%\cite[Proposition 4.10]{A-BGG}.
%
%\end{proof}

The $\ghat$-homomorphism
$S_i(\lam_i+1):\Wak{\lam}{\ka}\ra \Wak{\lam-(\la_i+1)\alpha_i}{\ka}$
induces a linear map
\begin{align}
H_{DS,\cmu}^0(\Wak{\lam}{\ka})\ra
H_{DS,\cmu}^0(\Wak{\lam-(\lam_i+1)\alpha_i}{\ka})
\label{eq:induced-screaning}
\end{align}
for $\lam\in P_+$.

For a positive integer $n$ satisfying \eqref{eq:weight} for some $m\in
\Z$, let
\begin{multline}
S^W_i(n) =  \int_{\Gamma} S^W_i(z_1)S^W_i(z_2)\dots S^W_i(z_n)
 dz_1\dots dz_n: \pi^\ka_{\nu}\ra \pi^\ka_{\nu-n\alpha_i}
\label{eq:intertwiner3}
\end{multline}
where
\begin{equation}    \label{SiW}
S_{i}^W(z)= \; : \! e^{\int -\frac{1}{\kappa}b_i(z)dz}\!: \;
 : \; \pi^\ka_{\nu}\ra \pi^\ka_{\nu-\alpha_i}
\end{equation}
and $\Gamma\in H_{n}(Y_{n},\mc{L}_n(\nu,\kappa))$.

\medskip

Next, we find the action of the screening operators on the
cohomologies.

\begin{Lem}\label{Lem:screning-for-W}
Under the isomorphism \eqref{HW}, the map
  \eqref{eq:induced-screaning} is identified with the operator
$$
S^W_i(\lam_i+1): \pi^\ka_{\lam-\ka\check{\mu}} \to
\pi^\ka_{\lam-\ka\check{\mu}-(\lam_i+1)\alpha_i}.
$$
%where
%$\cmu_i = \bra \alpha_i,\cmu \ket$ for each $i=1,\dots,r$.
\end{Lem}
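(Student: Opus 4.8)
The plan is to compute the effect of the screening operator $S_i(\lam_i+1)$ on the Drinfeld–Sokolov complex $C(\Wak{\lam}{\ka}\otimes\C_{\Psi_\cmu})$ and to trace it through the chain of isomorphisms used to prove Lemma \ref{pi shift}. Recall that $S_i(\lam_i+1)$ is built from the field $S_i(z)=e_i^R(z)\,:\!e^{\int-\frac{1}{\ka}b_i(z)dz}\!:$, in which $e_i^R(z)$ is the right action of $\mf{n}_+\ppart$ on the $\beta\gamma$-system $M_\g$, while the vertex-exponential factor lives purely in the Heisenberg vertex algebra $\pi^\ka$. The first step is to observe that $e_i^R(z)$ commutes with the left $\mf{n}_+\ppart$-action and hence with the standard differential $d_{\on{st}}$ of semi-infinite cohomology; so $S_i(\lam_i+1)$ descends to a map on $H^\bullet_{DS,\cmu}$, and this is exactly the map \eqref{eq:induced-screaning}. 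The key point is to identify what it becomes after we apply the isomorphism $\Phi$ of Proposition \ref{Pro:key-iso}.

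The second and central step is to track the right action through $\Phi$. By Proposition \ref{Pro:key-iso} applied to $M=\C_{\Psi_\cmu}$, we have $\Phi\circ(x^R\otimes 1)=(x^R\otimes 1-1\otimes x)\circ\Phi$ for $x\in\mf{n}_+\ppart$; but on the one-dimensional module $\C_{\Psi_\cmu}$ the element $x$ acts by the scalar $\Psi_\cmu(x)$, so on the right-hand side $1\otimes x$ is multiplication by $\Psi_\cmu(x)$. For $x=e_i\,t^n$ the character gives $\Psi_\cmu(e_i t^n)=\delta_{n,-\cmu_i-1}$ by \eqref{Psimu} together with \eqref{psial}. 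Hence, after passing through $\Phi$, the operator $e_i^R(z)$ — whose $z$-expansion is $\sum_n (e_i t^n)^R z^{-n-1}$ — is replaced, modulo terms that still contain a genuine right action (and therefore act by $0$ on $H^\bullet_{DS}(\Wak{\nu}{\ka})\cong\delta_{\bullet,0}\pi^\ka_\nu$, since that cohomology is the $\mf{n}_+\ppart$-coinvariants where the right action is killed), by the single scalar coming from the $z^{\cmu_i}$-coefficient. The upshot is that on cohomology $\Phi$ turns $S_i(z)$ into $z^{\cmu_i}\cdot :\!e^{\int-\frac{1}{\ka}b_i(z)dz}\!:$, i.e. into $S_i^W(z)$ decorated by the monodromy-shifting factor $z^{\cmu_i}$.

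The third step is bookkeeping of the integration cycle and the highest weight. Inserting the factor $\prod_j z_j^{\cmu_i}$ into the integrand of \eqref{eq:intertwiner2} has the effect of twisting the local system $\mc{L}_n(\nu,\ka)$ in precisely the way that matches the shift of the source Fock module from $\pi^\ka_\nu$ to $\pi^\ka_{\nu-\ka\cmu}$ recorded in Lemma \ref{pi shift}: the exponent $-(\nu|\alpha_i)/\ka$ in the defining multivalued function of $\mc{L}_n(\nu,\ka)$ becomes $-(\nu|\alpha_i)/\ka+\cmu_i=-(\nu-\ka\cmu|\alpha_i)/\ka$ (using $(\alpha_i|\alpha_i)/2=1$ for a simply-laced normalization, and the analogous identity $m\ka(\alpha_i|\alpha_i)$-shift in general), so that the chosen generator $\Gamma$ of the one-dimensional homology is carried to the generator appropriate for $\pi^\ka_{\nu-\ka\cmu}$. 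With $\nu=\lam$ one reads off that \eqref{eq:induced-screaning} is identified with $S^W_i(\lam_i+1):\pi^\ka_{\lam-\ka\cmu}\to\pi^\ka_{\lam-\ka\cmu-(\lam_i+1)\alpha_i}$, as claimed; the target index is consistent because $(\lam-(\lam_i+1)\alpha_i)-\ka\cmu=(\lam-\ka\cmu)-(\lam_i+1)\alpha_i$.

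I expect the main obstacle to be the second step: making rigorous the claim that, after applying $\Phi$, the residual right-action terms in $e_i^R(z)$ act by zero on cohomology. One has to be careful because $e_i^R(z)$ has infinitely many Fourier modes and $\Phi$ is only a linear (not vertex-algebra) isomorphism, so one should work at the level of the explicit $\beta\gamma$-realization of $M_\g$, use that $H^\bullet_{DS}$ of a Wakimoto module is the $\mf{n}_+\ppart$-semi-infinite cohomology which kills the right action (this is what underlies the collapse of the spectral sequence giving \eqref{HW}), and check compatibility with the $L_0$-grading so that the formal integrals in \eqref{eq:intertwiner2}–\eqref{eq:intertwiner3} are matched term by term. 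The $\sigma_\cmu$-twist that defines the $\W^\ka(\g)$-module structure commutes with everything in sight (it only modifies $\mathbf{b}_{i,0}$, exactly reproducing the shift $\nu\mapsto\nu-\ka\cmu$ already built into the statement), so it introduces no extra difficulty.
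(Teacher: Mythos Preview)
Your approach is essentially the same as the paper's: both transport $S_i(z)=e_i^R(z)\,S^W_i(z)$ through the isomorphism coming from Proposition \ref{Pro:key-iso} and then argue that only the pure Heisenberg piece $S^W_i$ survives on cohomology. The paper is extremely terse --- it records $\Phi'\circ S_i(z)=(S_i(z)+{:}e^{\int-\frac{1}{\ka}b_i(z)dz}{:})\circ\Phi'$ and then kills the residual $S_i$-contribution by a Cartan weight argument. Your explicit tracking of the factor $z^{\cmu_i}$ and its absorption into the local system (your step 3) is in fact more careful than what the paper writes down.

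The one genuine soft spot is your justification in step 2. You say the residual $e_i^R$-terms vanish because the cohomology ``is the $\mf{n}_+\ppart$-coinvariants where the right action is killed.'' This is not right on two counts: semi-infinite cohomology is not the coinvariant functor, and even if it were, the \emph{right} action commutes with the left action used to form the complex, so it descends rather than being annihilated --- one still has to compute what it descends to. The paper's clean replacement is the weight argument: the right action $e_i^R$ carries nonzero root-lattice weight on the $M_\g$-factor of the Wakimoto module, whereas the cohomology $\pi^\ka_\nu$ sits entirely in the weight-zero component (it is the $M_\g$-vacuum tensored with the Heisenberg module). Hence every term in the expansion of the transported $S_i(\lam_i+1)$ that still contains a factor of $e_i^R$ has nonzero weight and must act by zero between the weight-zero Fock modules. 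Swapping your ``coinvariants'' sentence for this weight argument closes exactly the gap you flagged in your final paragraph.
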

\begin{proof}
Let $\Phi'$ denote the isomorphism \eqref{HW}.
%$H^{\infty/2+\bullet}(\mf{n}_+\ppart, \Wak{\lam}{\ka}\*
%\C_{\Psi_{\cmu}})\isomap \pi^\ka_{\lam-\ka\cmu}$ 
%given in the proof of Lemma \ref{pi shift}.
It follows from Proposition \ref{Pro:key-iso} that
$$\Phi'\circ  S_i(z)=(S_i(z) + :\!e^{\int
  -\frac{1}{\kappa}b_i(z)dz}\! :)\circ \Phi'.$$ This implies that the
the operator
$$
S_i(\lam_i+1): \Wak{\lam}{\ka} \to \Wak{\lam-(\lam_i+1)\alpha_i}{\ka}
$$
induces on the cohomologies a map
$$
\pi^\ka_{\lam-\ka\cmu} \to \pi^\ka_{\lam-\ka\cmu-(\lam_i+1)\alpha_i}
$$
equal to the operator
$S^W_i(\lam_i+1)$ plus the sum of operators with non-zero
weight with respect to the Cartan subalgebra. The latter sum gives
rise to the zero map on the cohomologies since both Fock
representations $\pi^\ka_{\lam-\ka\cmu}$ and
$\pi^\ka_{\lam-\ka\cmu-(\lam_i+1)\alpha_i}$ have zero weight.
\end{proof}

Now we are ready to prove Theorem \ref{Th:symmetry}.

\subsection{Completion of the proof of Theorem \ref{Th:symmetry}}

By Lemmas \ref{HW} and \ref{Lem:screning-for-W},
Wakimoto modules are acyclic with respect to the cohomology functor
$H^i_{DS,\cmu}(?)$ and $T^\kappa_{\lam,\cmu}$ is identified
with the 0th cohomology of a complex $\ol{C}^\bullet_\lam$ which
starts as follows:
\begin{equation}    \label{pi complex}
0 \ra \pi^\ka_{\lam-\ka\check{\mu}} \overset{\ol{d}^0_\lam}{\ra} \bigoplus_{i=1}^r
\pi^\ka_{\lam-\ka\check{\mu}-(\lam_i+1)\al_i} \ra \dots
\end{equation}
with
\begin{equation}    \label{bard0}
\ol{d}^0_\lam=\sum_{i=1}^{r} c_i S^W_i(\lam_i+1)
\end{equation}
obtained by applying the functor $H^0_{DS,\cmu}(?)$ to each term of
the resolution of Proposition
\ref{Pro:resolution-for-generic-level-more-general} and using Lemma
\ref{Lem:screning-for-W}. In Section \ref{higher} we will prove that
the higher cohomologies of the complex \eqref{pi complex} vanish for
irrational $\ka$. For now, we just focus on its 0th cohomology:
\begin{equation}    \label{Tlm}
T^\kappa_{\lam,\cmu}\cong \bigcap_{i=1}^r \on{Ker}_{\pi^\ka_{\lam-\ka\cmu}}
S^W_i(\lam_i+1).
\end{equation}

Let $\check{\pi}^{\check\ka}$ be the Heisenberg vertex algebra of
$\lh$ of level $\check{\kappa}$. It is generated by the fields $^L\neg
b_i(z)$, $i=1,\dots, \on{rank}\lg$, with the OPEs
\begin{align}
{}^L\neg b_i(z){}^L\neg b_j(w)\sim
\frac{\check{\kappa}({}^L\neg \alpha_i|{}^L\neg
  \alpha_j)}{(z-w)^2}
\label{eq:Lb_i},
\end{align}
where ${}^L\neg \alpha_i$ is the $i$th simple root of $\lg$ and
$\check\ka=1/m\ka$. Note that $(\cdot|\cdot)$ now stands for the inner
product on $(\lh)^*$ such that the square length of its maximal root
is equal to $2$.

According to \cite{FF} (see also \cite{Fre05,FreBen04}), the duality
\eqref{eq:FF-duality} is induced by the vertex algebra isomorphism
\begin{align} \notag
\pi^\ka &\isomap \check{\pi}^{\check\ka}, \\    \label{checkpi}
b_i(z)&\mapsto -m\frac{\kappa(\alpha_i|\alpha_i)}{2}{}^L\neg b_i(z), \\
\ka &\mapsto \check\ka = \frac{1}{m\ka}, \notag
\end{align}
where $m$ is the lacing number of $\g$,
that is,
the maximal number of the edges in the Dynkin diagram of $\g$.

In the same way as above, we obtain in the case of $\lg$ that
\begin{align*}
&\check{T}^\kappa_{\cmu,\lam} \cong \bigcap_{i=1}^r
\on{Ker}_{\pi^{\check\ka}_{\cmu-\check\ka\lam}}
\check{S}^W_i(\cmu_i+1).
\end{align*}
Therefore in order to prove Theorem \ref{Th:symmetry} it is sufficient
to establish the isomorphisms
\begin{equation}    \label{Kers}
\on{Ker}_{\pi^\ka_{\lam-\ka\cmu}}
S^W_i(\lam_i+1) \cong \on{Ker}_{\check\pi^{\check\ka}_{\cmu-\check\ka\lam}}
\check{S}^W_i(\cmu_i+1), \qquad i=1,\dots,r
\end{equation}
(for irrational $\ka$).

To prove the latter, observe that we have tensor product decompositions
$$
\pi^\ka = \pi^\ka_i \otimes \pi^{\ka\perp}_i, \qquad \pi^\ka_\nu =
\pi^\ka_{i,\nu_i} \otimes \pi^{\ka\perp}_{i,\nu^\perp},
$$
where $\pi^\ka_i$ is the Heisenberg vertex subalgebra generated by the
field $b_i(z)$ and $\pi^{\ka\perp}_i$ is its centralizer, which is a
Heisenberg vertex algebra generated by the fields orthogonal to
$b_i(z)$. We denote by $\pi^\ka_{i,\nu_i}$ and
$\pi^{\ka\perp}_{i,\nu^\perp}$ the corresponding modules. By
construction, the operator $S^W_i(\lam_i+1)$ commutes with
$\pi^{\ka\perp}_i \subset \pi^\ka$. Therefore
$$
\on{Ker}_{\pi^\ka_{\lam-\ka\cmu}} S^W_i(\lam_i+1) =
\pi^{\ka\perp}_{i,(\lam-\ka\cmu)^\perp} \otimes
\on{Ker}_{\pi^\ka_{i,\lam_i-\ka\cmu_i}} S^W_i(\lam_i+1).
$$

We have a similar decomposition in the case of $\lg$. Furthermore,
under the identification of the Heisenberg vertex algebras $\pi^\ka$
and $\check\pi^{\check\ka}$, the subalgebras $\pi^\ka_i$ and
$\pi^{\ka\perp}_i$ are identified with the corresponding subalgebras
$\check\pi^{\check\ka}_i$ and $\check\pi^{\check\ka \perp}_i$ of
$\check\pi^{\check\ka}$. We also have
$$
\on{Ker}_{\check\pi^{\check\ka}_{\cmu-\check\ka\lam}}
\check{S}^W_i(\cmu_i+1) =
\check\pi^{\check\ka\perp}_{i,(\cmu-\check\ka\lam)^\perp}
\otimes \on{Ker}_{\pi^{\check\ka}_{i,\cmu_i-\check\ka\lam_i}}
\check{S}^W_i(\cmu_i+1)
$$
Since $\pi^{\ka\perp}_{i,(\lam-\ka\cmu)^\perp} \cong
\check\pi^{\check\ka\perp}_{i,(\cmu-\check\ka\lam)^\perp}$,
the isomorphism \eqref{Kers}
is equivalent to the isomorphism
\begin{equation}    \label{Kersi}
\on{Ker}_{\pi^\ka_{i,\lam_i-\ka\cmu_i}}
S^W_i(\lam_i+1) \cong \on{Ker}_{\pi^{\check\ka}_{i,\cmu_i-\check\ka\lam_i}}
\check{S}^W_i(\cmu_i+1).
\end{equation}
The left hand side of \eqref{Kersi} is the kernel of the map
\begin{equation}    \label{KerVir}
S^W_i(\lam_i+1): \pi^\ka_{i,\lam_i-\ka\cmu_i} \to
\pi^\ka_{i,-2-\lam_i-\ka\cmu_i}.
\end{equation}
As shown in \cite{FF} (see the proof of Proposition 5, where the
notation $\nu$ corresponds to our $\ka^{1/2}$), it commutes with the
Virasoro algebra $\on{Vir}_i^\ka$ generated by the field
\begin{equation}    \label{Ti}
T_i(z) = \frac{1}{2\ka(\al_i|\al_i)} :\! b_i(z)^2 \! : +
\left( \frac{1}{(\al_i|\al_i)} - \frac{1}{2\ka} \right) \pa_z b_i(z).
\end{equation}
with central charge $c=13-6\gamma-6\gamma^{-1}$ where $\gamma =
\frac{2\ka}{(\al_i|\al_i)}$.

According to the results of \cite{Kac,FeFu,TsuKan86}, for irrational
$\ka$ (and hence $\gamma$), the kernel of the operator \eqref{KerVir}
is isomorphic to the irreducible module over the Virasoro algebra
\eqref{Ti} with lowest weight (lowest eigenvalue of $L_0$)
\begin{equation}    \label{Deltagamma}
\Delta^\gamma_{\lam_i,\cmu_i} = \gamma^{-1}\, \frac{\lam_i(\lam_i+2)}{4} +
\gamma \frac{\cmu_i(\cmu_i+2)}{4} - \frac{\lam_i\cmu_i+\lam_i+\cmu_i}{2},
\end{equation}
and the same is true for the kernel on the right hand side of
\eqref{Kersi}.

Thus, for irrational $\ka$ the isomorphisms \eqref{Kersi} hold for all
$i=1,\dots,r$, and hence so do the isomorphisms \eqref{Kers}. This
completes the proof.\qed

\section{Irreducibility and vanishing of higher
  cohomologies}\label{sec:proof}

The Miura map
$\Upsilon$ induces an injective homomorphism
\begin{align}
\Upsilon_{Zhu}:\on{Zhu}(\W^\kappa(\g))\hookrightarrow \on{Zhu}(\pi)=S(\mf{h}),
\end{align}
where $\on{Zhu}(V)$ is Zhu's algebra of $V$ (\cite{ACL17}).
For $\lam\in \h^*$
$$\chi(\lam):\text{(evaluation at $\lam$)} \circ \Upsilon_{Zhu}:\on{Zhu}(\W^\kappa(\g))\ra \C.$$
Then 
\begin{align}
\chi(\lam)=\chi(\cmu)\iff \lam+\rho-\kappa \check{\rho} \in W(\cmu+\rho-\kappa\check{\rho}).
\label{eq:hw}
\end{align}
Here, $\rho$ and $\check{\rho}$ are the half sum of the positive roots
and the positive coroots of $\g$, respectively.

\subsection{Irreducibility and vanishing for irrational
  $\ka$}    \label{irrvan}

Let $\mathbf{L}_{\chi(\lam)}^\kappa$ be the irreducible
representation of $\W^\kappa(\g)$ with highest weight
$\chi(\lam)$. Recall that $\mathbb{V}_{\nu}^\kappa$ denotes the
irreducible highest weight representation of $\affg_\kappa$ with
highest weight $\nu$.

The following assertion follows from  \cite{Ara04} and \cite[Theorem
9.14]{Ara07}.
\begin{Pro}    \label{irr}
Let $\kappa$ be irrational,
 $\lam\in P_+$, $\cmu\in P^{\vee}_+$.
Then
$H^i_{DS}(\mathbb{V}_{\lam-\kappa\cmu}^\kappa)=0$ for $i\ne 0$
and
$H^0_{DS}(\mathbb{V}_{\lam-\kappa\cmu}^\kappa) \cong
\mathbf{L}_{\chi(\lam-\kappa\cmu)}^\kappa$.
\end{Pro}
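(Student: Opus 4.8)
The plan is to read off Proposition~\ref{irr} from Arakawa's description of the principal Drinfeld--Sokolov reduction of irreducible highest weight $\affg$-modules; the only genuine content is to check that the weight $\nu:=\lam-\ka\cmu$ lies in the range covered by \cite{Ara04} and \cite[Theorem~9.14]{Ara07}.

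First I would record the relevant features of $\nu$ for irrational $\ka$. Since $\ka\notin\Q$, the level $\ka+\ka_c$ is non-critical, so $\mathbb V_\nu^\ka$ is an irreducible object of category $\mathcal O$ at a non-critical level. Next, $\nu+\widehat\rho$ is regular and the integral root system of $\nu$ is finite: for a real root $\beta=\al+n\delta$ of $\affg$ we have $\bra\nu+\widehat\rho,\beta^\vee\ket=\bra\lam+\rho,\al^\vee\ket-\ka\,\kappa_0(\cmu,\al^\vee)+n\,q_\al(\ka+h^\vee)$, with $q_\al\in\Q_{>0}$ and $\cmu$ identified with an element of $\h^*$ via $\kappa_0$; by irrationality of $\ka$ this is an integer only when $n\,q_\al=\kappa_0(\cmu,\al^\vee)$, which has at most one solution $n$ for each $\al$, and when it holds the value equals $\bra\lam+\rho,\al^\vee\ket+h^\vee\kappa_0(\cmu,\al^\vee)$, which is $\geq 1$ for $\al\in\Delta_+$ and $\leq-1$ for $\al\in\Delta_-$ because $\lam\in P_+$ and $\cmu\in P^\vee_+$. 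Hence the integral roots are finitely many and none is singular, so $\mathbb V_\nu^\ka$ has finite length in $\mathcal O$ and $\nu$ lies in the setting where Arakawa's computations give a clean answer.

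Second, I would invoke the two cited results. By the vanishing theorem of \cite{Ara04}, $H^i_{DS}(\mathbb V_\nu^\ka)=0$ for $i\neq0$; this applies since $\mathbb V_\nu^\ka\in\mathcal O$ at non-critical level and $\nu$ is generic as above. By \cite[Theorem~9.14]{Ara07}, which computes $H^0_{DS}$ of any irreducible highest weight module of non-critical level, $H^0_{DS}(\mathbb V_\nu^\ka)$ is either $0$ or an irreducible $\W^\ka(\g)$-module; the regularity of $\nu+\widehat\rho$ together with $\lam\in P_+$, $\cmu\in P^\vee_+$ places $\nu$ in the non-vanishing chamber, so $H^0_{DS}(\mathbb V_\nu^\ka)$ is a non-zero irreducible $\W^\ka(\g)$-module. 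To pin down its highest weight I would match the Harish--Chandra parameter produced by \cite[Theorem~9.14]{Ara07} with the character $\chi(\nu)=\chi(\lam-\ka\cmu)$ of $\on{Zhu}(\W^\ka(\g))$ given in \eqref{eq:hw} through $\Upsilon_{Zhu}$ and evaluation; this is the standard compatibility of the Miura map with Drinfeld--Sokolov reduction and yields $H^0_{DS}(\mathbb V_\nu^\ka)\cong\mathbf{L}^\ka_{\chi(\lam-\ka\cmu)}$.

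The step I expect to be the main obstacle is precisely this hypothesis-matching. Arakawa's papers fix a convention for the reduction functor and phrase the non-vanishing/degeneracy dichotomy in terms of the integral Weyl group and a distinguished chamber, whereas here the functor is defined through the non-degenerate character $\Psi$ and the input carries a weight of the special shape $\lam-\ka\cmu$. One must therefore set up the dictionary between the two formulations---keeping careful track of the critical shift $\ka_c$ and of the identification $\h\cong\h^*$ via $\kappa_0$---confirm that genericity at irrational $\ka$ really does place $\nu$ in the ``good'' chamber for \emph{every} $\lam\in P_+$ and $\cmu\in P^\vee_+$, and check that the resulting $\W^\ka(\g)$-highest weight is $\chi(\lam-\ka\cmu)$ itself rather than a Weyl conjugate of it. Once this dictionary is in place, the two theorems apply verbatim.
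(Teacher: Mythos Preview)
Your proposal is correct and follows exactly the paper's approach: the paper gives no argument beyond the sentence ``The following assertion follows from \cite{Ara04} and \cite[Theorem~9.14]{Ara07},'' and you are invoking precisely those two results. Your sketch of the hypothesis-matching---verifying that for irrational $\ka$ the weight $\nu=\lam-\ka\cmu$ has finite integral root system, that $\nu+\widehat\rho$ is regular, and that $\nu$ lies in the non-vanishing chamber---supplies detail the paper leaves implicit.
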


Recall the isomorphism \eqref{checkpi} between the Heisenberg algebras
$\pi$ and $\check\pi$ which induces the duality isomorphism
\eqref{eq:FF-duality}. It implies the following statement.

\begin{Lem}\label{Lem:symmetry}
Let $\lam\in P$,  $\cmu\in P^{\vee}$.
Under the duality isomorphism \eqref{eq:FF-duality}, we have
\begin{align*}
\mathbf{L}_{\chi(\lam-\kappa\cmu)}^\kappa\cong
\mathbf{L}_{\chi(\cmu-\check{\kappa}\lam)}^{\check{\kappa}}.
\end{align*}
\end{Lem}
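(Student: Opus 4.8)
The plan is to read off the lemma from the explicit Heisenberg isomorphism \eqref{checkpi}, by comparing the zero modes of the Heisenberg fields on the two Fock modules in question. The starting point is that $\mathbf{L}^\kappa_{\chi(\nu)}$ is, by construction, the irreducible $\W^\kappa(\g)$-subquotient of $\pi^\kappa_\nu$ --- viewed as a $\W^\kappa(\g)$-module through the Miura map $\Upsilon$ --- that is generated by the highest weight vector; likewise on the $\lg$-side. Since, by \cite{FF}, the duality \eqref{eq:FF-duality} is precisely the restriction of \eqref{checkpi} to the images of the Miura maps, it is enough to show that \eqref{checkpi} identifies $\pi^\kappa_{\lam-\kappa\cmu}$, as a module over the Miura-embedded $\W^\kappa(\g)$, with $\check\pi^{\check\kappa}_{\cmu-\check\kappa\lam}$ as a module over $\W^{\check\kappa}(\lg)$: such an identification automatically sends the highest weight vector, hence the subquotient $\mathbf{L}^\kappa_{\chi(\lam-\kappa\cmu)}$, to $\mathbf{L}^{\check\kappa}_{\chi(\cmu-\check\kappa\lam)}$.

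To carry this out I would simply match scalars. On $\pi^\kappa_\nu$ the mode $b_{i,0}$ acts by $(\nu|\alpha_i)$ --- the normalization implicit in the screening operators \eqref{SiW}, cf. the weight condition \eqref{eq:weight} --- so on $\pi^\kappa_{\lam-\kappa\cmu}$ it acts by $\tfrac{(\alpha_i|\alpha_i)}{2}\langle\lam,\alpha_i^\vee\rangle - \kappa\langle\cmu,\alpha_i\rangle$; combining with the substitution $b_i \mapsto -m\tfrac{\kappa(\alpha_i|\alpha_i)}{2}\,{}^L\neg b_i$ from \eqref{checkpi} and with $\check\kappa = 1/m\kappa$, one gets that ${}^L\neg b_{i,0}$ acts on the image by $-\tfrac{1}{m\kappa}\langle\lam,\alpha_i^\vee\rangle + \tfrac{2}{m(\alpha_i|\alpha_i)}\langle\cmu,\alpha_i\rangle$. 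On the other hand, on $\check\pi^{\check\kappa}_{\cmu-\check\kappa\lam}$ the mode ${}^L\neg b_{i,0}$ acts by $(\cmu - \check\kappa\lam\,|\,{}^L\neg\alpha_i)$ in the $\lg$-normalized form; expanding this with the dictionary ${}^L\neg\alpha_i^\vee = \alpha_i$, ${}^L\neg\alpha_i = \alpha_i^\vee$ and $({}^L\neg\alpha_i|{}^L\neg\alpha_i) = \tfrac{4}{m(\alpha_i|\alpha_i)}$ (the last of which is forced by the compatibility of the OPEs \eqref{eq:b_i} and \eqref{eq:Lb_i} under \eqref{checkpi}) yields the same scalar. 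Hence the two Fock modules are identified by \eqref{checkpi}, and the lemma follows.

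The step I expect to be the main obstacle is the bookkeeping in the second paragraph: keeping straight the four vector spaces $\h$, $\h^*$, $\lh$, $(\lh)^*$, the invariant forms of $\g$ and of $\lg$ on them, and which of $\lam$, $\cmu$ is a weight versus a coweight of which Lie algebra, so as to be sure that the coefficient $-m\tfrac{\kappa(\alpha_i|\alpha_i)}{2}$ in \eqref{checkpi}, the ratio $\tfrac{4}{m(\alpha_i|\alpha_i)}$ of the two normalized forms, and the factor $m$ in $\check\kappa = 1/m\kappa$ combine into an \emph{exact} equality of weights, rather than merely an equality up to a $W$-orbit. Once that is checked the lemma is immediate; note in particular that, unlike the criterion \eqref{eq:hw}, no $\rho$-shift enters here, because the shifts relating $\W$-algebra highest weights to Heisenberg highest weights are already built into the definitions of $\chi$ and $\check\chi$ and are automatically respected by \eqref{checkpi}, so that only the linear part of the substitution needs to be tracked.
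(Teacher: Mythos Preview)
Your proposal is correct and is exactly the argument the paper has in mind: the paper gives no proof beyond the sentence ``Recall the isomorphism \eqref{checkpi} \ldots\ It implies the following statement,'' and you are simply supplying the zero-mode bookkeeping that makes this implication explicit. Your computation of the eigenvalues checks out (in particular the identity $({}^L\neg\alpha_i|{}^L\neg\alpha_i)=4/(m(\alpha_i|\alpha_i))$ is right), and your observation that no $\rho$-shift is needed---because you match Heisenberg highest weights directly rather than comparing $\chi$'s via \eqref{eq:hw}---is a useful clarification.
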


The following assertion was conjectured by Creutzig and Gaiotto
\cite{CreGai}.

\begin{Th}\label{Th:CG}
Let $\kappa$ be irrational.
For any $\lam\in P_+$, $\cmu\in \check{P}_+$,
we have
$$T^{\kappa}_{\lam,\cmu}\cong \mathbf{L}_{\chi(\lam-\kappa\cmu)}^\kappa.$$
\end{Th}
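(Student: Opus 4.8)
The plan is to realize $T^\kappa_{\lam,\cmu}$ as an explicit highest weight module and then identify the highest weight using the criterion \eqref{eq:hw}. First I would recall from \eqref{Tlm} that for irrational $\ka$,
$$
T^\kappa_{\lam,\cmu}\cong \bigcap_{i=1}^r \on{Ker}_{\pi^\ka_{\lam-\ka\cmu}} S^W_i(\lam_i+1),
$$
so $T^\kappa_{\lam,\cmu}$ is naturally a $\W^\kappa(\g)$-submodule of the Fock module $\pi^\ka_{\lam-\ka\cmu}$, where $\W^\kappa(\g)$ acts via the Miura embedding $\Upsilon:\W^\kappa(\g)\hookrightarrow\pi^\ka$. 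The highest weight vector of $\pi^\ka_{\lam-\ka\cmu}$ lies in all the kernels (it is annihilated by the positive modes appearing in each $S^W_i(\lam_i+1)$), hence lies in $T^\kappa_{\lam,\cmu}$ and is a highest weight vector for $\W^\kappa(\g)$ of highest weight $\chi(\lam-\kappa\cmu)$ in the notation preceding the theorem. Thus there is a nonzero map $\mathbf{L}^\kappa_{\chi(\lam-\kappa\cmu)}\to T^\kappa_{\lam,\cmu}$, provided we know $T^\kappa_{\lam,\cmu}$ is generated by this vector, or at least that it has $\mathbf{L}^\kappa_{\chi(\lam-\kappa\cmu)}$ as a composition factor.

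Next I would bring in Proposition \ref{irr}: for irrational $\ka$ we have $H^0_{DS}(\V^\kappa_{\lam-\kappa\cmu})\cong \mathbf{L}^\kappa_{\chi(\lam-\kappa\cmu)}$ and all higher cohomologies vanish. The key point is to compare the ordinary (untwisted) reduction $H^\bullet_{DS}(\V^\kappa_{\lam-\kappa\cmu})$ with the twisted reduction $H^\bullet_{DS,\cmu}(\V^\kappa_{\lam})=T^\kappa_{\lam,\cmu}$. The relation should come from the spectral flow automorphism $\sigma_\cmu$ introduced in Section \ref{twist}: applying $\sigma_\cmu$ intertwines the $\cmu$-twisted complex $C(\V^\kappa_\lam)$ with differential $d_\cmu$ and the untwisted complex for the twisted module $\sigma^*_\cmu C(\V^\kappa_\lam)$ with differential $d$. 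Concretely, I expect that $\sigma^*_\cmu \V^\kappa_\lam$, or rather a suitable completion/submodule of it, is isomorphic as a $\ghat_\ka$-module to (a module in the same block as) $\V^\kappa_{\lam-\kappa\cmu}$ — the weight shift $-\kappa\cmu$ is exactly the $h_{i,0}$-shift by $-(\ka+\ka_c)\cmu_i$ recorded in the proof of Lemma \ref{pi shift}. Granting this, Proposition \ref{irr} gives $T^\kappa_{\lam,\cmu}=H^0_{DS,\cmu}(\V^\kappa_\lam)\cong H^0_{DS}(\sigma^*_\cmu\V^\kappa_\lam)\cong \mathbf{L}^\kappa_{\chi(\lam-\kappa\cmu)}$, with all higher cohomologies vanishing, which is exactly the assertion. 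Alternatively, staying on the Fock side: combine the realization \eqref{Tlm} with the fact (from \cite{FF,ACL17}) that $\mathbf{L}^\kappa_{\chi(\lam-\kappa\cmu)}$ is itself the intersection of kernels of the screening operators $S^W_i$ on $\pi^\ka$, read off with the shifted highest weight $\lam-\ka\cmu$; for irrational $\ka$ the integer $\lam_i+1$ is precisely the exponent predicted by \eqref{eq:weight} with $\nu=\lam-\ka\cmu$, so the finitely-many screenings $S^W_i(\lam_i+1)$ are exactly the ones cutting out the irreducible $\W$-module, and the two kernels coincide on the nose.

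The main obstacle I anticipate is the careful identification of $\sigma^*_\cmu\V^\kappa_\lam$ (or the relevant screening exponents) so that Proposition \ref{irr} applies verbatim: one must check that after spectral flow the highest weight becomes $\lam-\kappa\cmu$ in the precise normalization used in \eqref{eq:hw}, account for the Tits-lifting subtleties when $\cmu\in\check P_+\setminus\LP$, and verify that $\sigma_\cmu$ genuinely converts $d_\cmu$ into $d$ at the level of the full complex, not just on generators — this is the content of the identities $\sigma_\cmu(d_{\on{st}})=d_{\on{st}}$, $\sigma_\cmu(\wh\Psi)=\wh\Psi_\cmu$ from Section \ref{twist}, which I would need to upgrade to an isomorphism of complexes. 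A secondary technical point is matching the integrality condition \eqref{eq:weight} with $n=\lam_i+1$, $m$ the lacing number, against the data $\lam\in P_+$, $\cmu\in\check P_+$; for irrational $\ka$ this reduces to $(\lam|\al_i)=\tfrac{(\al_i|\al_i)}{2}\lam_i$, which holds since $\lam\in P$. Once these normalizations are pinned down, the rest is formal: invoke Proposition \ref{irr}, the acyclicity of Wakimoto modules (Lemmas \ref{pi shift} and \ref{Lem:screning-for-W}), and the uniqueness of the irreducible $\W^\kappa(\g)$-module with a given highest weight.
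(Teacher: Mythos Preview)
Your alternative approach ``on the Fock side'' is essentially what the paper does, and it works. The paper makes it precise as follows: apply the \emph{untwisted} functor $H^0_{DS}(?)$ to the Wakimoto resolution of $\V^\kappa_{\lam-\kappa\cmu}$ given in Proposition~\ref{Pro:resolution-for-generic-level-more-general}; by the same computation as in Lemmas~\ref{pi shift} and~\ref{Lem:screning-for-W} this yields
\[
H^0_{DS}(\V^\kappa_{\lam-\kappa\cmu})\;\cong\;\bigcap_{i=1}^r \on{Ker}_{\pi^\ka_{\lam-\ka\cmu}} S^W_i(\lam_i+1),
\]
which is the same intersection as in \eqref{Tlm}. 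Proposition~\ref{irr} then identifies the left-hand side with $\mathbf{L}^\kappa_{\chi(\lam-\kappa\cmu)}$. So the fact you wanted to quote from \cite{FF,ACL17} (that the irreducible is the kernel of these particular screenings) is not a black-box input; it is obtained by combining Proposition~\ref{Pro:resolution-for-generic-level-more-general} with Proposition~\ref{irr}, both of which you already invoke.

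Your primary approach via spectral flow, however, has a genuine gap. It is true that $\sigma_\cmu$ converts $d_\cmu$ to $d$ on the complex, but the resulting object $\sigma^*_\cmu C(\V_{\lam,\ka})$ is \emph{not} of the form $C(N)$ for any $\ghat_\ka$-module $N$ to which Proposition~\ref{irr} applies: the fermion factor $\sigma^*_\cmu\bigwedge^{\infty/2+\bullet}(\n_+)$ has shifted vacuum annihilation conditions, so it is not the standard semi-infinite wedge module. Moreover, even on the bosonic side, $\sigma^*_\cmu\V_{\lam,\ka}$ is not isomorphic to the irreducible highest weight module $\V^\kappa_{\lam-\kappa\cmu}$. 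For instance, already for $\g=\mf{sl}_2$, $\lam=0$, $\cmu$ the fundamental coweight, the vacuum vector in $\sigma^*_\cmu V^\ka(\g)$ is \emph{not} annihilated by $e=e\,t^0$ (it acts as $e\,t^{-1}$), whereas the highest weight vector of $\V^\kappa_{-\kappa\cmu}$ is. Your hedges (``suitable completion/submodule'', ``same block'') do not repair this: the two modules are genuinely different, and there is no evident map between them that would let you transport Proposition~\ref{irr}. The paper avoids this entirely by never comparing the $\ghat$-modules directly; it only compares their images in Fock space after reduction, where both land in the same kernel intersection.
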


\begin{Co}
The modules $T^{\kappa}_{\lam,\cmu}, \lam \in P_+, \cmu \in
\check{P}_+$ are irreducible for irrational $\ka$.
\end{Co}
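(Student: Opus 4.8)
The Corollary is immediate from Theorem~\ref{Th:CG}: that theorem identifies $T^{\kappa}_{\lam,\cmu}$ with $\mathbf{L}^{\kappa}_{\chi(\lam-\kappa\cmu)}$, which by construction is an \emph{irreducible} $\W^{\kappa}(\g)$-module, so $T^{\kappa}_{\lam,\cmu}$ is irreducible for every irrational $\kappa$ and all $\lam\in P_+$, $\cmu\in\check{P}_+$. The substance therefore lies in Theorem~\ref{Th:CG}, whose proof I now outline.

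The plan is to combine the screening-kernel realization already obtained in this section with Proposition~\ref{irr}. On one side, \eqref{Tlm} gives
\[
T^{\kappa}_{\lam,\cmu}\cong\bigcap_{i=1}^{r}\on{Ker}_{\pi^{\kappa}_{\lam-\kappa\cmu}}S^W_i(\lam_i+1);
\]
on the other, Proposition~\ref{irr} gives $H^{i}_{DS}(\mathbb{V}^{\kappa}_{\lam-\kappa\cmu})=0$ for $i\neq0$ and $H^0_{DS}(\mathbb{V}^{\kappa}_{\lam-\kappa\cmu})\cong\mathbf{L}^{\kappa}_{\chi(\lam-\kappa\cmu)}$. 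The bridge between them is the isomorphism of $\W^{\kappa}(\g)$-modules
\[
T^{\kappa}_{\lam,\cmu}=H^0_{DS,\cmu}(\mathbb{V}_{\lam,\kappa})\;\cong\;H^0_{DS}(\mathbb{V}^{\kappa}_{\lam-\kappa\cmu}),
\]
which I would establish using $\mathbb{V}_{\lam,\kappa}=\mathbb{V}^{\kappa}_{\lam}$ for irrational $\kappa$ together with the spectral-flow description of the twisted reduction from Section~\ref{twist}: the automorphism $\sigma_{\cmu}$ carries the complex $(C(\mathbb{V}^{\kappa}_{\lam}),d_{\cmu})$ computing $H^{\bullet}_{DS,\cmu}$, together with its $\W^{\kappa}(\g)$-action, to the ordinary complex $(C(\sigma_{\cmu}^{*}\mathbb{V}^{\kappa}_{\lam}),d)$, and on cohomology the net effect is the shift of highest weight $\lam\mapsto\lam-\kappa\cmu$ already seen for Wakimoto modules in Lemma~\ref{pi shift}, so the right-hand side is the untwisted reduction of the irreducible affine module of highest weight $\lam-\kappa\cmu$. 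Feeding Proposition~\ref{irr} into this then yields $T^{\kappa}_{\lam,\cmu}\cong\mathbf{L}^{\kappa}_{\chi(\lam-\kappa\cmu)}$, proving Theorem~\ref{Th:CG}; as a byproduct Lemma~\ref{Lem:symmetry} recovers Theorem~\ref{Th:symmetry} via $\mathbf{L}^{\kappa}_{\chi(\lam-\kappa\cmu)}\cong\mathbf{L}^{\check{\kappa}}_{\chi(\cmu-\check{\kappa}\lam)}$.

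I expect the bridge isomorphism to be the main obstacle: one must propagate $\sigma_{\cmu}$ through the \emph{entire} BRST complex $C(\mathbb{V}^{\kappa}_{\lam})=\mathbb{V}^{\kappa}_{\lam}\otimes\bigwedge{}^{\infty/2+\bullet}(\mf{n}_+)$ (acting on both the affine factor and the Clifford factor) and verify that the induced $\W^{\kappa}(\g)$-module structure on the cohomology is the intrinsic one, not merely that the underlying graded vector spaces agree. The computation with the $z^{-1}$-Fourier coefficients of \eqref{bf b} in the proof of Lemma~\ref{pi shift} is the prototype, but here it must be carried out for a genuine Weyl module rather than a Wakimoto module, where the fermionic contribution no longer splits off as cleanly; one also has to know that the higher cohomologies vanish, which is part of Proposition~\ref{irr} and separately of Theorem~\ref{Th:van}. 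A more routine secondary point is to check that the weights $\lam-\kappa\cmu$, with $\lam\in P_+$, $\cmu\in\check{P}_+$ and $\kappa$ irrational, lie in the range to which the results of \cite{Ara04,Ara07} used in Proposition~\ref{irr} apply. Once all this is in place, irreducibility of $T^{\kappa}_{\lam,\cmu}$ follows at no extra cost.
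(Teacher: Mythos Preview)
Your deduction of the Corollary from Theorem~\ref{Th:CG} is fine and matches the paper exactly: once $T^{\kappa}_{\lam,\cmu}\cong \mathbf{L}^{\kappa}_{\chi(\lam-\kappa\cmu)}$ is known, irreducibility is immediate.

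The gap is in your outline of Theorem~\ref{Th:CG}. Your ``bridge'' argument asserts that the spectral flow $\sigma_{\cmu}$ carries $(C(\mathbb{V}^{\kappa}_{\lam}),d_{\cmu})$ to $(C(\sigma_{\cmu}^{*}\mathbb{V}^{\kappa}_{\lam}),d)$ and that this identifies $H^0_{DS,\cmu}(\mathbb{V}^{\kappa}_{\lam})$ with $H^0_{DS}(\mathbb{V}^{\kappa}_{\lam-\kappa\cmu})$. The first step is formally correct (up to a shift in cohomological degree coming from the Clifford side), but the second fails: for $\cmu\in\check{P}_{+}\setminus\{0\}$ the twisted module $\sigma_{\cmu}^{*}\mathbb{V}^{\kappa}_{\lam}$ is \emph{not} isomorphic to $\mathbb{V}^{\kappa}_{\lam-\kappa\cmu}$. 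Indeed, in $\sigma_{\cmu}^{*}\mathbb{V}^{\kappa}_{\lam}$ the vector $v_{\lam}$ is no longer annihilated by $e_{i}t^{0}$ (since the twisted action of $e_{i}t^{0}$ is the original action of $e_{i}t^{-\cmu_{i}}$), and in fact $\sigma_{\cmu}^{*}\mathbb{V}^{\kappa}_{\lam}$ is not a highest weight module in the usual category $\mathcal{O}$ at all. The clean shift $\nu\mapsto\nu-\kappa\cmu$ you saw in Lemma~\ref{pi shift} is a feature of Wakimoto modules (the free-field realization absorbs the twist into the Heisenberg highest weight), not of Weyl modules.

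The paper therefore takes a different route to Theorem~\ref{Th:CG}. Rather than moving $\sigma_{\cmu}$ through the Weyl module, it works on \emph{both} sides via Wakimoto resolutions: it applies the \emph{untwisted} functor $H^{0}_{DS}(?)$ term by term to the resolution of $\mathbb{V}^{\kappa}_{\lam-\kappa\cmu}$ from Proposition~\ref{Pro:resolution-for-generic-level-more-general} (whose terms are $\Wak{w\circ\lam-\kappa\cmu}{\kappa}$), uses $H^{0}_{DS}(\Wak{w\circ\lam-\kappa\cmu}{\kappa})\cong\pi^{\kappa}_{w\circ\lam-\kappa\cmu}$ and the analogue of Lemma~\ref{Lem:screning-for-W} to identify the $0$th differential, and concludes
\[
H^{0}_{DS}(\mathbb{V}^{\kappa}_{\lam-\kappa\cmu})\;\cong\;\bigcap_{i=1}^{r}\on{Ker}_{\pi^{\kappa}_{\lam-\kappa\cmu}}S^{W}_{i}(\lam_{i}+1),
\]
which is exactly the right-hand side of \eqref{Tlm}. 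Proposition~\ref{irr} then finishes the job. So the ``bridge'' is established not by spectral flow on $\mathbb{V}^{\kappa}_{\lam}$ but by producing the \emph{same} screening-kernel description from two different resolutions.
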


This is the statement of Theorem \ref{Th:irr}.

In order to prove Theorem \ref{Th:CG}, we will need the following
generalization of Proposition \ref{Pro:resolution-weyl} which
has been proved in \cite{ACL17}.

\begin{Pro}\label{Pro:resolution-for-generic-level-more-general}
Let $\kappa$ be irrational,
$\lam\in P_+$, $\cmu \in \check{P}_+$.
There exists a resolution $C^\bullet_{\lam-\ka\cmu}$
of the $\affg$-module $\mathbb{V}_{\lam-\kappa\cmu}^\kappa$ of the form
\begin{align*}
&0\ra \mathbb{V}_{\lam-\kappa\cmu}^\kappa \ra
C^0_{\lam-\ka\cmu}\overset{d^0_{\lam-\ka\cmu}}{\ra}
C^1_{\lam-\ka\cmu}\ra\dots \ra C^n_{\lam-\ka\cmu}\ra 0,\\
& C^i_{\lam-\ka\cmu} =
\bigoplus_{w\in W\atop \ell(w)=i}\Wak{w\circ \lam-\kappa\cmu}{\ka}.
\end{align*}
The differential $d^0_{\lam-\ka\cmu}$ is given by
$$d^0_{\lam-\ka\cmu}=\sum_{i=1}^{r}
c_i S_i(\lam_i+1)$$
for some $c_i\in \C$, with $\lam_i=\bra \lam,\alpha_i^\vee\ket$.
\end{Pro}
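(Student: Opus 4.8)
The plan is to obtain the resolution by applying the spectral-flow twist $\sigma_\cmu^*$ to the Wakimoto resolution of Proposition~\ref{Pro:resolution-weyl}. Recall that for $\cmu\in\check P$ the functor $M\mapsto\sigma_\cmu^* M$ on smooth $\affg_\ka$-modules is exact and leaves unchanged both the underlying vector space of $M$ and the underlying linear map of any $\affg_\ka$-homomorphism; in particular it does not alter the underlying complex of vector spaces of any complex of $\affg_\ka$-modules, hence does not alter its cohomology, which is then endowed with the $\sigma_\cmu$-twisted action. Applying it term by term to the exact complex $0\to\mathbb{V}_\lam^\ka\to C^0_\lam\to\dots\to C^n_\lam\to 0$ of Proposition~\ref{Pro:resolution-weyl} therefore yields an exact complex $0\to\sigma_\cmu^*\mathbb{V}_\lam^\ka\to\sigma_\cmu^* C^0_\lam\to\dots\to\sigma_\cmu^* C^n_\lam\to 0$, and it remains to identify the twisted terms and the twisted differential.

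For the terms I would use the standard behaviour of Wakimoto modules under spectral flow: $\sigma_\cmu^*\Wak{\nu}{\ka}\cong\Wak{\nu-\ka\cmu}{\ka}$ for every $\nu\in\h^*$ (see \cite{Fre05}), which refines the shift $b_{i,0}\mapsto b_{i,0}-\ka\cmu_i$ of the Heisenberg zero modes already exploited in the proof of Lemma~\ref{pi shift}. Hence $\sigma_\cmu^* C^j_\lam=\bigoplus_{\ell(w)=j}\Wak{w\circ\lam-\ka\cmu}{\ka}=C^j_{\lam-\ka\cmu}$. Next I would identify $\sigma_\cmu^*\mathbb{V}_\lam^\ka$ with $\mathbb{V}_{\lam-\ka\cmu}^\ka$: the twist of an irreducible module is irreducible, its highest weight is $\lam-\ka\cmu$ by the affine Weyl translation formula (equivalently, by the zero-mode shift above), and it is genuinely a highest-weight module for the standard Borel rather than for a twisted one; this last point is where the hypothesis $\cmu\in\check P_+$ enters. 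With these two identifications the displayed complex is the asserted resolution of $\mathbb{V}_{\lam-\ka\cmu}^\ka$.

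For the differential I would show that $\sigma_\cmu^*$ carries $d^0_\lam=\sum_i c_i S_i(\lam_i+1)$ to an operator $\sum_i c_i' S_i(\lam_i+1)\colon\Wak{\lam-\ka\cmu}{\ka}\to\bigoplus_i\Wak{s_i\circ\lam-\ka\cmu}{\ka}$. Concretely, $S_i(n)$ is $\sigma_\cmu$-equivariant: the only factor of \eqref{Si}--\eqref{eq:eb} that changes under $\sigma_\cmu$ is $z^{-b_{i,0}/\ka}$, which acquires the power $z^{\cmu_i}$, and this is exactly the discrepancy between the action of $S_i(n)$ on $\Wak{\lam}{\ka}$ and on $\Wak{\lam-\ka\cmu}{\ka}$; moreover $\lam-\ka\cmu$ still satisfies \eqref{eq:weight} with $n=\lam_i+1$ (with $m$ changed by a $\cmu$-dependent integer), so $S_i(\lam_i+1)$ is defined and, by Theorem~\ref{Th:Tsuhiya-Kanie} for irrational $\ka$, nonzero. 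Alternatively one can argue, exactly as in the proof of Proposition~\ref{Pro:resolution-weyl}, that $\Hom_{\affg}(\Wak{\lam-\ka\cmu}{\ka},\Wak{s_i\circ\lam-\ka\cmu}{\ka})$ is one-dimensional and spanned by the nonzero screening.

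The step I expect to be the main obstacle is the identification $\sigma_\cmu^*\mathbb{V}_\lam^\ka\cong\mathbb{V}_{\lam-\ka\cmu}^\ka$, i.e.\ checking that the spectral-flow twist of the Weyl module is an honest irreducible highest-weight module of highest weight $\lam-\ka\cmu$. This is precisely where dominance of $\cmu$ is essential (for $\cmu\notin\check P_+$ one gets $H^0_{DS,\cmu}(\mathbb{V}_\lam^\ka)=0$ instead), and it relies on the level being generic at irrational $\ka$. The remaining ingredients (exactness of the twisted complex, $\sigma_\cmu$-equivariance of the screenings, the inequalities \eqref{eq:weight}) are routine.
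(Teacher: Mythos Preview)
The paper does not give its own proof here; it cites \cite{ACL17}, where the argument is the direct extension of the proof of Proposition~\ref{Pro:resolution-weyl}: for irrational $\ka$ the weights $w\circ\lam-\ka\cmu$ are still generic enough that $\Wak{w\circ\lam-\ka\cmu}{\ka}$ coincides with the contragredient Verma module of that weight, and one has a finite-type BGG resolution of the simple module $\mathbb{V}_{\lam-\ka\cmu}^\ka$.

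Your spectral-flow approach is genuinely different, and there is a real gap in it --- not where you expect, but in the step you call standard. The identification $\sigma_\cmu^*\Wak{\nu}{\ka}\cong\Wak{\nu-\ka\cmu}{\ka}$ is not routine. The automorphism $\sigma_\cmu$ of $\affg_\ka$ lifts to the free-field vertex algebra $M_\g\otimes\pi^\ka$, but it twists the $\beta\gamma$ modes as well as the Heisenberg ones (already for $\g=\sw_2$ one has $e(z)=a(z)$, so $a_n\mapsto a_{n-\cmu_1}$). Hence $\sigma_\cmu^*(M_\g\otimes\pi^\ka_\nu)$ is the tensor product of a \emph{twisted} $\beta\gamma$ Fock module (with shifted vacuum conditions) with a shifted Heisenberg module --- a twisted Wakimoto module in the sense of \cite{Fre05}, not $\Wak{\nu-\ka\cmu}{\ka}$ as defined in this paper. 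The zero-mode shift you invoke from Lemma~\ref{pi shift} concerns the field $\mathbf{b}_i(z)$ of~\eqref{bf b}, which is built from $h_i(z)$ and the \emph{fermionic} ghosts of the Drinfeld--Sokolov complex; it says nothing about the bosonic $\beta\gamma$ sector of the Wakimoto realization. For irrational $\ka$ one can still argue that this twisted Wakimoto module is isomorphic as a $\affg_\ka$-module to $\Wak{\nu-\ka\cmu}{\ka}$ --- e.g.\ via twisted Verma modules, or by showing both coincide with the same contragredient Verma --- but that is precisely the substance of the direct proof in \cite{ACL17}, so the spectral-flow packaging does not give a shortcut. The same remark applies to the step $\sigma_\cmu^*\mathbb{V}_\lam^\ka\cong\mathbb{V}_{\lam-\ka\cmu}^\ka$ that you already flag as the obstacle.
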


\begin{proof}[Proof of Theorem  \ref{Th:CG}]
Let us apply the quantum Drinfeld--Sokolov
reduction functor (without twist by $\cmu$) to each term of the
resolution of Proposition
\ref{Pro:resolution-for-generic-level-more-general}. Then we find that
$H_{DS}^0(\V_{\lam-\kappa \cmu}^\kappa)$ is the 0th cohomology of the
complex obtained by applying the functor $H^i_{DS}(?)$ to the
resolution in Proposition
\ref{Pro:resolution-for-generic-level-more-general}. In the same way
as in the proof of Lemma \ref{Lem:screning-for-W} 
we then obtain that
\begin{equation}    \label{Lchi}
H_{DS}^0(\V_{\lam-\kappa \cmu}^\kappa)
\cong \bigcap_{i=1}^r \on{Ker}_{\pi_{\lam-\kappa\cmu}}
S_i^W(\lam_i+1).
\end{equation}
Combining the isomorphisms \eqref{Tlm} and \eqref{Lchi}, we obtain an
isomorphism
\begin{equation}    \label{twoDS}
T^\kappa_{\lam,\cmu} \cong H_{DS}^0(\V_{\lam-\kappa \cmu}^\kappa).
\end{equation}
According to Proposition \ref{irr},
$\mathbf{L}_{\chi(\lam-\kappa\cmu)}^\kappa\cong
H_{DS}^0(\V_{\lam-\kappa \cmu}^\kappa)$. Together with \eqref{twoDS},
this completes the proof of Theorem \ref{Th:CG}.
%Observe that under the natural isomorphism $T_{-\ka\cmu}: \pi_\lam
%\cong \pi_{\lam-\kappa\cmu}$ sending the highest weight vector to the
%highest weight vector and commuting with the creation operators
%$b_{i,n}, i=,\dots,r; n<0$, the operator
%$$
%S_{i}^W(\lam_i+1;\cmu_i+1): \pi_\lam \to
%\pi_{\lam-(\lam_i+1)\al_i}
%$$
%corresponds to the operator
%$$
%S_{i}^W(\cmu_i+1;1): \pi_{\lam-\kappa\cmu} \to
%\pi_{\lam-(\lam_i+1)\al_i-\kappa\cmu}.
%$$
%Therefore formula \eqref{Tlm}
%can be rewritten as follows:
%\begin{equation}    \label{Tka1}
%T^\kappa_{\lam,\cmu}\cong \bigcap_{i=1}^r \on{Ker}_{\pi_{\lam-\kappa\cmu}}
%S_{i}^W(\lam_i+1;1).
%\end{equation}
%Explicitly,
%\begin{multline*}
%S_{i}^W(\lam_i;1) = \\ \int_{\Gamma} S_{i}^W(z_1)S_{i}^W(z_2)\dots
%S_{i}^W(z_{\lam_i+1}) dz_1\dots dz_{\lam_i+1}:
%\pi_{\lam-\kappa \cmu}\ra \pi_{\lam-(\lam_i+1)\al_i-\kappa \cmu}.
%\label{eq:intertwiner2W}
%\end{multline*}
%
%On the other hand, let us apply the quantum Drinfeld--Sokolov
%reduction functor (without twist by $\cmu$) to each term of the
%resolution of Proposition
%\ref{Pro:resolution-for-generic-level-more-general}. Then we find that
%$H_{DS}^0(\V_{\lam-\kappa \cmu}^\kappa)$ is the 0th cohomology of the
%complex obtained by applying the functor $H^i_{DS}(?)$ to the
%resolution in Proposition
%\ref{Pro:resolution-for-generic-level-more-general}. In the same way
%as in the proof of Lemma \ref{Lem:screning-for-W} 
\end{proof}

Note that Theorem \ref{Th:symmetry} also follows from Lemma
\ref{Lem:symmetry} and Theorem \ref{Th:CG}. Thus, we obtain an
alternative proof of Theorem \ref{Th:symmetry}. Both proofs rely on
resolutions of irreducible $\ghat$-modules in terms of Wakimoto
modules. The proof given in the previous section uses in addition to
that an isomorphism of kernels of screening operators in the rank 1
case, which boils down to some properties of representations of the
Virasoro algebras. The proof presented in this section does not use
representations of the Virasoro algebra, but uses instead Proposition
\ref{irr} stating that $H^0_{DS}(\mathbb{V}_{\lam-\kappa\cmu}^\kappa)$
is irreducible.

\subsection{Cohomology vanishing for arbitrary
  $\ka$}    \label{vangen}

In this subsection we prove Theorem \ref{Th:van} by generalizing the
proof in the case $\lam=\cmu=0$ given in Sect.\ 15.2 of
\cite{FreBen04} (which followed \cite{dBT}).

We start by representing the complex $C({\mb V}_{\lam,\ka})$ as a tensor
product of two subcomplexes. Let $\{ J^a \}$ be a basis of $\g$ which
is the union of the basis
$\{ J^\al \}_{\al \in \Delta_+}$ of $\n_+$ (where $J^\al = e^\al$)
and a basis $\{ J^{\ol{a}} \}_{\ol{a} \in \Delta_- \cup I}$ of $\bb_-
= \n_- \oplus \h$ consisting of root vectors $f^\al, \al \in
\Delta_+$, in $\n_-$ and vectors $h^i, i \in I = \{
1,\ldots,\ell \}$, in $\h$. Thus, we use Latin upper indices to denote
arbitrary basis elements, Latin indices with a bar to denote elements
of $\bb_-$, and Greek indices to denote basis elements of $\n_+$.

Denote by $c^{ab}_d$ the structure constants of $\g$ with respect to
the basis $\{ J^a \}$.

Define the following currents:
\begin{equation}    \label{whJ}
\wh{J}^a(z) = \sum_{n\in\Z} \wh{J}^a_n z^{-n-1} = J^a(z) +
\sum_{\beta,\gamma \in \Delta_+} c^{a\beta}_\gamma : \! \psi_\gamma(z)
\psi^*_\beta(z)\! : \; .
\end{equation}

Now, the first complex, denoted by $C({\mb V}_{\lam,\ka})_0$, is spanned
by all monomials of the form
\begin{equation}    \label{basis of C0}
\wh{J}^{\ol{a}(1)}_{n_1} \ldots \wh{J}^{\ol{a}(r)}_{n_r}
\psi^*_{\al(1),m_1} \ldots \psi^*_{\al(s),m_s} v, \qquad v \in V_\lam
\end{equation}
(recall that $J^{\ol{a}} \in \bb_-$). The second complex, denoted
by $C({\mb V}_{\lam,\ka})'$, is spanned by all monomials of the form
$$
\wh{J}^{\al(1)}_{n_1} \ldots \wh{J}^{\al(r)}_{n_r}
\psi_{\al(1),m_1} \ldots \psi_{\al(s),m_s}
$$
(recall that $J^{\al} \in \n_+$). We have an analogue of formula
(15.2.3) of \cite{FreBen04}: the natural map
\begin{equation}    \label{dec as vs}
C({\mb V}_{\lam,\ka})' \otimes C({\mb
  V}_{\lam,\ka})_0 \overset{\sim}\longrightarrow C({\mb V}_{\lam,\ka})
\end{equation}
sending $A \otimes B$ to $A \cdot B$ is an isomorphism of graded
vector spaces.

We then have an analogue of Lemma 15.2.5 of \cite{FreBen04}: the
cohomology of $(C({\mb V}_{\lam,\ka}),d_\cmu)$ is isomorphic to the
tensor product of the cohomologies of the two complexes in \eqref{dec
  as vs}: $(C({\mb V}_{\lam,\ka})_0,d_\cmu)$ and $(C({\mb
  V}_{\lam,\ka})',d_\cmu)$. This is proved in the same way as in
\cite{FreBen04}, using the commutation relations established in
Sect. 15.2.4, in which we set $\chi=\Psi_\cmu$.

In the same way as in Sect. 15.2.6, we prove that the cohomology of
the complex $(C({\mb V}_{\lam,\ka})',d_\cmu)$ is one-dimensional, in
cohomological degree 0. Thus, we have an analogue of Lemma 15.2.7: the
cohomology of $(C({\mb V}_{\lam,\ka}),d_\cmu)$ is isomorphic to the
cohomology of its subcomplex $(C({\mb V}_{\lam,\ka})_0,d_\cmu)$.

To compute $H^\bullet(C({\mb V}_{\lam,\ka})_0,d_\cmu)$, we introduce a
double complex as in Sect. 15.2.8 of \cite{FreBen04}. The convergence
of the resulting spectral sequence is guaranteed by the fact that
$(C({\mb V}_{\lam,\ka})_0,d_\cmu)$ is a direct sum of
finite-dimensional subcomplexes obtained via the $\Z$-grading
introduced below in Section \ref{char form}. The $0$th differential is
$\hat{\Psi}_{\cmu}$. We have an analogue of formula (15.2.4) from
\cite{FreBen04}:
\begin{equation}    \label{p+}
[\hat{\Psi}_{\cmu},\wh{J}^{\ol{a}}_n] = \sum_{\beta\in \Delta_+\atop
  k\in \Z} ([\sigma_{\cmu}(p_-),J^{\bar a}_n]| J^\beta_{-k})\psi_{\beta,k+1}^*,
\end{equation}
where 
$$
p_-=\sum_{i=1}^{\ell}\frac{(\alpha_i,\alpha_i)}{2} f_i, %\cdot 1,
$$
(here $f_i=f_i \cdot 1$), $\sigma_\cmu$ is the automorphism introduced
in Section \ref{twist}, and we use the notation
$$
(A t^n|B t^m) = \ka_0(A,B) \delta_{n,-m}.
$$
In \cite{FreBen04}, formula (15.2.4) (to which our formula \eqref{p+}
specializes when $\cmu=0$) was used to show that $\bb_- t^{-1}
\C[t^{-1}]$ has a basis consisting of the elements $P_i^{(n)}, n<0,
i=1,\ldots,r$, forming a basis of the Lie subalgebra
$$
\wh{\mathfrak a}_- = \on{Ker}
\on{ad}(p_-) \subset \bb_- t^{-1} \C[t^{-1}]
$$
and elements $I^\al_n, \al \in \Delta_+, n<0$, such that $\on{ad}(p_-)
\cdot I^\al_n = f_\al t^n$ (here $f_\al$ is a generator of the
one-dimensional subspace of $\n_-$ corresponding to the root $-\al$).

The existence of this basis is equivalent to the surjectivity of the map
\begin{equation}    \label{surj map0}
\on{ad} p_-: \bb_- t^{-1}\C[t^{-1}] \to \n_-[t^{-1}]
\end{equation}
which implies the following direct sum decomposition (as a
vector space)
\begin{equation} \label{decomp0} \bb_- t^{-1}\C[t^{-1}] =
  \wh{\mathfrak a}_- \oplus (\on{ad} p_-)^{-1}(\n_-[t^{-1}]),
\end{equation}
where the second vector space on the right hand side denotes a
particular choice of a subspace of $\bb_- t^{-1}\C[t^{-1}]$ that
isomorphically maps onto $\n_-[t^{-1}]$ under the map $\on{ad}
p_-$. This decomposition, in turn, implies that the complex $C({\mb
  V}^\ka_0)_0$ is isomorphic, as a vector space, to the tensor product
\begin{equation}    \label{tens pr0}
C({\mb V}^\ka_0)_0 =
  U(\wh{\mathfrak a}_-) \otimes \C[\wh{I}^\al_n]_{\al \in \Delta_+, n<0}
  \otimes \bigwedge(\psi^*_{\al,n})_{\al \in \Delta_+, n<0},
\end{equation}
where $\C[\wh{I}^\al_n]_{\al \in \Delta_+, n<0}$ stands for the linear span
of lexicographically ordered monomials in the $\wh{I}^\al_n$. The
differential $\chi = \wh\Psi_0$ acts as follows:
\begin{equation}    \label{psi0}
[\wh\Psi_0,\wh{P}^{(i)}_n] = 0, \qquad [\wh\Psi_0,\wh{I}^{\al}_n] =
  \psi^*_{\al,n+1}, \qquad [\wh\Psi_0,\psi^*_{\al,n}]_+=0.
\end{equation}
In Sect. 15.2.9 of \cite{FreBen04}, the decomposition \eqref{tens pr0}
and formulas \eqref{psi0} were used to show that the higher
cohomologies of the complex $C({\mb V}^\ka_0)_0$ vanish and the $0$th
cohomology is isomorphic to $U(\wh{\mathfrak a}_-)$. This proves the
vanishing of $H^j_{DS,0}(\V^\ka_0)$ for all $j \neq 0$.

We want to apply this argument for arbitrary $\lam \in P_+, \cmu \in
\check{P}_+$. In order to do that, we need to prove that the linear
map
\begin{equation}    \label{surj map}
\on{ad}(\sigma_\cmu(p_-)): \bb_- t^{-1}\C[t^{-1}] \to \n_-[t,t^{-1}]
\to \n_-[t,t^{-1}]/\n_-[t] \cong \n_- t^{-1}\C[t^{-1}],
\end{equation}
(which is the analogue of the map \eqref{surj map0} for general $\cmu$)
is surjective. To see that, let
\begin{equation}    \label{Ialncmu}
I^\al_{n,\cmu} = \sigma_\cmu(I^\al_{n-\bra \al,\cmu \ket}), \qquad n<0.
\end{equation}
Then the formula $\on{ad}(p_-) \cdot I^\al_{n,\cmu} = f_\al t^n$
implies that
$$
\on{ad}(\sigma_\cmu(p_-)) \cdot I^\al_{n,\cmu} = \sigma_{\cmu}(f_\al
t^{n-\bra \al,\cmu \ket}) = f_\al t^n.
$$
Moreover, $I^\al_m$ has the form
$$
I^\al_m = \sum_{i=1}^r b_i f_{\alpha-\alpha_i} t^m, \qquad b_i \in \C
$$
(in this formula, if $\alpha=\alpha_i$, then $f_{\alpha-\alpha_i}$
stands for the Cartan generator $h_i$). Therefore
$$
\sigma_\cmu(I^\al_m) = \sum_{i=1}^r b_i f_{\alpha-\alpha_i}
t^{m+\bra\alpha-\alpha_i,\cmu\ket}.
$$
Since $\cmu \in \check{P}_+$, it follows that the elements
$$
I^\al_{n,\cmu} = \sigma_\cmu(I^\al_{n-\bra \al,\cmu \ket}) =
\sum_{i=1}^r b_i f_{\alpha-\alpha_i} t^{n-\bra\alpha_i,\cmu\ket}
$$
with $n<0$ belong to $\bb_- t^{-1} \C[t^{-1}]$, and so the map
\eqref{surj map} is indeed surjective.

Therefore, we have the following analogue of the decomposition
\eqref{decomp0}
\begin{equation}    \label{decomp}
\bb_- t^{-1}\C[t^{-1}] = \wh{\mathfrak a}^\cmu_- \oplus (\on{ad}
\sigma_\cmu(p_-))^{-1}(\n_- t^{-1} \C[t^{-1}]),
\end{equation}
where $\wh{\mathfrak a}^\cmu_-$ is the kernel of the map \eqref{surj
  map}. This implies an analogue of the tensor product decomposition
\eqref{tens pr0}:
\begin{equation}    \label{tens pr}
C({\mb V}_{\lam,\ka})_0 =
  U(\wh{\mathfrak a}^\cmu_-) \otimes \C[\wh{I}^\al_{n,\cmu}]_{\al \in
    \Delta_+, n<0} \otimes \bigwedge(\psi^*_{\al,n})_{\al \in
    \Delta_+, n<0} \otimes V_\lam
\end{equation}
where $I^\al_{n,\cmu}, \al \in \Delta_+, n<0$, is defined by the
formula \eqref{Ialncmu} and $\C[\wh{I}^\al_{n,\cmu}]_{\al \in
  \Delta_+, n<0}$ stands for the linear span of lexicographically
ordered monomials in the $\wh{I}^\al_{n,\cmu}$. The differential
$\wh\Psi_\cmu$ acts as follows:
\begin{equation}    \label{psi1}
[\wh\Psi_\cmu,\wh{P}] = 0, \quad \forall P \in \wh{\mathfrak
  a}^\cmu_-,
\qquad [\wh\Psi_\cmu,\wh{I}^{\al}_{n,\cmu}] = \psi^*_{\al,n+1},
\end{equation}
\begin{equation}    \label{psi2}
[\wh\Psi_\cmu,\psi^*_{\al,n}]_+=0, \qquad \wh\Psi_\cmu \cdot v = 0,
\qquad \forall v \in V_\lam.
\end{equation}
In the same way as in Sect. 15.2.9 of \cite{FreBen04}, we then use the
decomposition \eqref{tens pr} and formulas \eqref{psi1}, \eqref{psi2}
to show that the higher cohomologies of the complex $(C({\mb
  V}_{\lam,\ka})_0,\wh\Psi_\cmu)$ vanish and the $0$th cohomology is
isomorphic to $U(\wh{\mathfrak a}^\cmu_-) \otimes V_\lam$. This
implies the statement of Theorem \ref{Th:van}.\qed

\subsection{Character formula}    \label{char form}

We define a $\Z_+$-grading on the complex $C({\mb V}_{\lam,\ka})$ as
follows: $\on{deg} v_{\lam,\ka} = 0$, where $v_\lam$ is the highest
weight vector of ${\mb V}_{\lam,\ka}$,
$$
\on{deg} e_\al t^n = \on{deg} \psi_{\al,n} = -n -
\bra\al,\cmu+\check\rho \ket, \qquad \on{deg} f_\al t^n = \on{deg}
\psi^*_{\al,n} = -n + \bra\al,\cmu+\check\rho \ket,
$$
$$
\on{deg} h_{i,n} = -n.
$$
We find that $\on{deg} d_{\on{st}} = \on{deg} \wh\Psi_\cmu = 0$, so
the differential $d_\cmu$ preserves the grading and the complex
$(C({\mb V}_{\lam,\ka}),d_\cmu)$ decomposes into a direct sum of
homogeneous subcomplexes corresponding to all non-negative
degrees. The same is true for the subcomplex $(C({\mb
  V}_{\lam,\ka})_0,d_\cmu)$.

It is easy to see that the homogeneous subcomplexes of $(C({\mb
  V}_{\lam,\ka})_0,d_\cmu)$ are finite-dimensional. Hence we can use
this $\Z_+$-grading and the vanishing Theorem \ref{Th:van} to find the
character of $T^\ka_{\lam,\cmu}$, which appears as the $0$th
cohomology of $(C({\mb V}_{\lam,\ka})_0,d_\cmu)$, by the taking the
alternating sum of characters of the $j$th terms of $C({\mb
  V}_{\lam,\ka})_0$:
\begin{align*}
\on{char} T^\ka_{\lam,\cmu} &= \sum_{j \geq 0} (-1)^j \on{char}
C^j({\mb V}_{\lam,\ka})_0 \\
&= \on{char}_\cmu V_\lam \cdot \prod_{\al \in \Delta_+\atop n\geq
  \bra\al,\cmu+\check\rho \ket} (1-q^n) \prod_{\al \in \Delta_+\atop n>
  \bra\al,\cmu+\check\rho \ket} (1-q^n)^{-1} \prod_{n>0} (1-q^n)^{-r}
\\
&= \on{char}_\cmu V_\lam \cdot \prod_{\al \in \Delta_+}
(1-q^{\bra\al,\cmu+\check\rho \ket}) \prod_{n>0} (1-q^n)^{-r}.
\end{align*}
Here $\on{char}_\cmu V_\lam$ is the character of the
finite-dimensional representation $V_\lam$ with respect to the
$\Z_+$-grading defined by the formulas $\on{deg} v_\lam = 0$, where
$v_\lam$ is the highest weight vector of $V_\lam$, and $\deg f_\al =
\bra\al,\cmu+\check\rho \ket$.

By the Weyl character formula,
$$
\on{char}_\cmu V_\lam = q^{\bra \lam+\rho,\cmu+\check\rho \ket}
\sum_{w \in W} (-1)^{\ell(w)} q^{-\bra w(\lam+\rho),\cmu+\check\rho \ket} 
\prod_{\al \in \Delta_+}
(1-q^{\bra\al,\cmu+\check\rho \ket})^{-1}.
$$
Therefore we obtain the following character formula for
$T^\ka_{\lam,\cmu}$ (for any $\ka \in \C$):
\begin{equation}    \label{char1}
\on{char} T^\ka_{\lam,\cmu} = q^{\bra \lam+\rho,\cmu+\check\rho \ket}
\sum_{w \in W} (-1)^{\ell(w)} q^{-\bra w(\lam+\rho),\cmu+\check\rho \ket}
\prod_{n>0} (1-q^n)^{-r}.
\end{equation}
It is independent of $\ka$ and clearly symmetrical under the exchange
of $\lam$ and $\cmu$ (as well as $\rho$ and $\check\rho$).

\subsection{Failure of Theorem \ref{Th:symmetry} for rational
  $\kappa$}    \label{fail}

In this subsection we show that the statement of Theorem
\ref{Th:symmetry} with rational $\kappa$ is false already for
$\g=\mf{sl}_2$. In this case, we will use the parameter
$\gamma=\ka/\ka_0 \in \C$ (then $\check\ka$ corresponds to
$\gamma^{-1}$, $\ka_c$ to $\gamma=-2$ and $\ka_{\mf{sl}_2}$ to
$\gamma=4$), and will identity weights $\lam \in P$ with the integers
$\bra \check\alpha,\lam \ket \in \Z$, coweights $\cmu \in \check{P}$
with the integers $\bra \cmu,\alpha \ket \in \Z$. It is proved in
\cite{{Ara05}} that for any complex $\gamma \neq -2$, the cohomology
$H_{DS}^0(?)$ defines an exact functor from the category
$\mc{O}_\kappa$ of $\affg_{\kappa}$-modules to the category $\mc{O}$
of modules over the Virasoro algebra with the central charge
$13-6\gamma-6\gamma^{-1}$. It sends the Verma module
$\mathbb{M}^\ka_{\lam}$ over $\affg_{\kappa}$ with highest weight
$\lam$ (resp.\ the contragradient dual $D(\mathbb{M}^\ka_{\lam})$ of
$\mathbb{M}^\ka_{\lam}$; resp.\ the unique simple quotient
$\mathbb{L}^\ka_\lam$ of $\mathbb{M}^\ka_{\lam}$) to the Verma module
(resp.\ the contragradient dual of the Verma module; resp.\ a simple
module or zero module) over the Virasoro algebra with lowest weight
(i.e. the lowest eigenvalue of the element $L_0$)
$\Delta^\gamma_{\lam,0} = \lam(\lam+2)/4\kappa-\lam/2$ (compare with
formula \eqref{Deltagamma}).

In particular, $T^{\kappa}_{\lam,0}=H_{DS}(\mathbb{V}^\ka_{\lam})$ is
a quotient of the Verma module $H_{DS}^0( \mathbb{M}^\ka_{\lam})$ and
hence is a cyclic module over the Virasoro algebra, generated by its
lowest weight vector.

Now suppose that $\gamma<0$. It is proved in \cite{Fre92} that in this
case the Wakimoto module $\Wak{\lam}{\ka}$ with $\lam\in \Z_+$ is
isomorphic to the contradradient dual $D(\mathbb{M}^\ka_{\lam})$ of
the Verma module $\mathbb{M}^\ka_{\lam}$ over $\affg_{\kappa}$ with
highest weight $\lam$, and that $H_{DS,0}^0(\Wak{\lam}{\ka})\cong
\pi_{\lam}^\kappa$ is isomorphic to the contradradient dual of the
corresponding Verma module over the Virasoro algebra. Thus,
$\pi_{\lam}^\kappa$ is a cocyclic module over the Virasoro algebra for
any $\lam\in \Z_+$.

In our counterexample, we will set $\gamma=-2$, $\lam=2$,
$\check{\mu}=0$. (Similar counterexamples can also be obtained for any
negative integer $\gamma \leq -2$ and $\lam$ from an infinite
subset of $\Z_+$ depending on $\gamma$.) Then we have
$\mathbb{V}^\ka_0\cong \mathbb{L}^\ka_0$ and there is an exact
sequence
$$
0\ra \mathbb{L}^\ka_0\ra \mathbb{V}^\kappa_{2}\ra
\mathbb{L}^\kappa_{2}\ra 0
$$
(see e.g. \cite{Mal90,KasTan95}). Applying the functor $H_{DS}^0(?)$,
we get an exact sequence
\begin{equation}    \label{exact1}
0\ra \mathbf{L}_{\chi(0)}^\kappa \ra T^\kappa_{2,0}\ra 
\mathbf{L}_{\chi(2)}^\kappa\ra 0.
\end{equation}
The $L_0$-lowest weights of $\mathbf{L}_{\chi(0)}^\kappa$ and
$\mathbf{L}_{\chi(2)}^\kappa$ are $0$ and $-2$, respectively.
Therefore the image of $\mathbf{L}_{\chi(0)}^\kappa$ in
$T^\kappa_{2,0}$ is generated by a singular vector of weight
$2$. Thus, the module $T^\kappa_{2,0}$ is a cyclic module over the
Virasoro algebra, generated by its lowest weight vector, which is an
extension of the irreducible module $\mathbf{L}_{\chi(2)}^\kappa$ by
the irreducible module $\mathbf{L}_{\chi(0)}^\kappa$.

Next, consider
$T^{\check{\kappa}}_{0,2}=H_{DS,2}^0(\mathbb{V}^{\check{\kappa}}_0)$.
Our character formula \eqref{char1} shows that
$T^{\check{\kappa}}_{0,2}$ and $T^\kappa_{2,0}$ have the same
characters. Therefore, their irreducible subquotients are also the
same. However, we will now show that these two modules are not
isomorphic to each other.

The embedding
$\mathbb{V}^{\check{\kappa}}_0 \hookrightarrow
\mathbb{W}_{\lam}^{\check{\kappa}}$ induces a map
\begin{equation}    \label{Ttopi}
T^{\check{\kappa}}_{0,2}=H_{DS,2}^0(\mathbb{V}^{\check{\kappa}}_0) \to
H_{DS,2}^0(\mathbb{W}_{\lam}^{\check{\kappa}})
=\pi^{\check{\kappa}}_{-2\check{\kappa}}.
\end{equation}
With our choice of $\kappa$, it follows from \cite{Fre92} that
$\pi^{\check{\kappa}}_{-2\check{\kappa}}$ is a cocyclic module over
the Virasoro algebra, generated by its lowest weight vector. Its
character coincides with the character of $\pi^{\kappa}_{2}$, and
hence it is isomorphic to $\pi^{\kappa}_{2}$.

We claim that the map \eqref{Ttopi} is injective. This does not follow
immediately since we don't know whether $H_{DS,2}^0(?)$ is an exact
functor. However, we know from the character formula \eqref{char1}
that the weight $2$ subspace of $T^{\check{\kappa}}_{0,2}$ is
$2$-dimensional. Furthermore, it is clear that the images of
$\hat{h}_{-2}v$ and $\hat{h}_{-1}^2v$ (where $\{e,h,f\}$ is the
standard basis of $\mf{sl}_2$ and $v$ is the highest weight vector of
$C(\mathbb{V}^{\check{\kappa}}_0)$) in
$T^{\check{\kappa}}_{0,2}=H_{DS,2}^0(\mathbb{V}^{\check{\kappa}}_0)$
are linearly independent. Hence they form a basis of this weight $2$
subspace. But the map \eqref{Ttopi} sends these vectors to non-zero
scalar multiples of the vectors $b_{-2}v_{-2\check{\kappa}}$ and
$b_{-1}^2v_{-2\check{\kappa}}$, which form a basis in the weight 2
subspace of $\pi^{\check{\kappa}}_{-2\check{\kappa}}$. Therefore, the
map \eqref{Ttopi} is injective on the weight $2$ subspaces. But
$T^{\check{\kappa}}_{0,2}$ has the same irreducible subquotients as
$T^\ka_{2,0}$, i.e. the ones with lowest weights $0$ and $2$ (see the
exact sequence \eqref{exact1}). From the injectivity on the weight $2$
subspaces, it then follows that the map \eqref{Ttopi} itself is
injective.

Recalling that $\pi^{\check{\kappa}}_{-2\check{\kappa}}$ is a cocyclic
module over the Virasoro algebra, we then find that
$T^{\check{\kappa}}_{0,2}$ is cocyclic as well. Therefore, we have a
non-trivial extension
\begin{equation}    \label{exact2}
0\ra \mathbf{L}_{\chi(2)}^\kappa \ra T^{\check{\kappa}}_{0,2} \ra 
\mathbf{L}_{\chi(0)}^\kappa\ra 0.
\end{equation}
Comparing the extensions \eqref{exact1} and \eqref{exact2}, we
conclude that $T^{\check{\kappa}}_{0,2}$ is {\em not} isomorphic to
$T^\kappa_{2,0}$. Rather, $T^{\check{\kappa}}_{0,2}$ is isomorphic to
a different module: the contragradient dual of $T^\kappa_{2,0}$. Thus,
we obtain a counterexample to the statement of Theorem
\ref{Th:symmetry} with rational $\kappa$.

\section{Resolutions and vanishing}    \label{higher}

In this section, we will give a more detailed description of the
complexes obtained by applying the $\cmu$-twisted Drinfeld--Sokolov
reduction functor $H^\bullet_{DS,\cmu}(?)$ to the resolution of the
Weyl module $\V^\ka_\lam$ described in Proposition
\ref{Pro:resolution-for-generic-level-more-general}. In our proof of
Theorem \ref{Th:symmetry}, we focused on the $0$th differential and
the $0$th cohomology of this complex, which is the module
$T^\kappa_{\lam,\cmu}$. Here, we will give formulas for the higher
differentials and will explain the connection to the BGG resolutions
of irreducible finite-dimensional representations of the corresponding
quantum groups, following \cite{FF93,FF:wak}. This works for all
irrational values of $\ka$.

Theorem \ref{Th:van} then implies that for irrational $\ka$ this
complex is a resolution of the ${\mc W}^\ka(\g)$-module
$T^\kappa_{\lam,\cmu}$ by Fock representations.  As an application, we
will write in Section \ref{char} the character of the module
$T^\kappa_{\lam,\cmu}$ as an alternating sum of characters of the
Fock representations appearing in the resolution. This reproduces the
character formula from Section \ref{char form}.

Finally, in Section \ref{limit} we will give an alternative proof, for
generic $\ka$, that the higher cohomologies of this complex (and hence
$H^j_{DS,\cmu}({\mb V}^\ka_\lam)$ with $j \neq 0$) vanish. It relies
on the vanishing of the higher cohomologies in the classical limit
$\ka \to \infty$. In this limit, the screening operators satisfy the
Serre relations of the Lie algebra $\g$, i.e. they generate an action
of the Lie subalgebra $\n_- \subset \g$. The cohomologies of our
complex in the limit $\ka \to \infty$ are therefore the cohomologies
of $\n_-$ acting on the $\ka \to \infty$ limit of the Fock
representation $\pi^\ka_\lam$. It is easy to show that this action is
co-free, so that higher cohomologies vanish. The vanishing of higher
cohomologies in the limit $\ka \to \infty$ implies the vanishing for
generic $\ka$ as well. This is a generalization of the argument that
was used in \cite{FF93}, which corresponds to the case $\lam=0,
\cmu=0$.

\subsection{Recollections from \cite{FF93}}    \label{recoll}

Using the results of the earlier works \cite{BMP,SV,Varchenko}, Feigin
and one of the authors showed in \cite{FF93} how to associate linear
operators between Fock representations to singular vectors in Verma
modules over the quantum group. Let us briefly recall this
construction.

Let $q=e^{\pi i/\ka}$ and $U_q(\g)$ the Drinfeld--Jimbo quantum group
with generators $e_i, K_i, f_i, i=1,\dots,r$ and standard relations
(see, e.g., \cite{FF93}, Sect. 4.5.1). Let $U_q(\n_-)$ (resp.,
$U_q(\b_+)$) be the lower nilpotent (resp., upper Borel) subalgebra of
$U_q(\g)$, generated by $f_i$ (resp, $K_i, e_i$) where
$i=1,\dots,r$. The generators $f_i$ satisfy the $q$-Serre relations
\begin{equation}    \label{quantumserre}
(\on{ad}_q f_i)^{-a_{ij}+1} \cdot f_j = 0,
\end{equation}
where $(a_{ij})$ is the Cartan matrix of $\g$. The notation
$\on{ad}_q f_i$ means the following: introduce a grading on the free
algebra with generators $e_i, i=1,\dots,l,$ with respect to the root
lattice $Q$ of $\g$, by putting $\deg f_i = -\al_i$. If $x$ is a
homogeneous element of this algebra of weight $\gamma \in Q$,
put $$\on{ad}_q f_i \cdot x = f_i x - q^{(\al_i|\gamma)} x f_i.$$

Next, we define Verma modules over $U_q(\g)$ as follows. Let $\C_\la$
be the one-dimensional representation of $U_q(\b_+)$, which is spanned
by a vector ${\mathbf 1}_\la$, such that $$e_i \cdot {\mathbf 1}_\la =
0, \quad \quad K_i \cdot {\mathbf 1}_\la = q^{(\la|\al_i)} {\mathbf
  1}_\la, \quad \quad i=1,\dots,r.$$

The Verma module $M_\la^q$ over $U_q(\g)$ of highest weight $\la$ is
the module induced from the $U_q(\b_+)$-module
$\C_\la$: $$M_\la^q = U_q(\g) \otimes_{U_q(\b_+)} \C_\la.$$
It is canonically isomorphic to $U_q(\n_-) {\mathbf 1}_\la$, and hence to
$U_q(\n_-)$.

Roughly speaking, the screening operators $Q_i = \int S^W_i(z) dz$,
where $S^W_i(z)$ is given by formula \eqref{SiW}, satisfy the
$q$-Serre relations \eqref{quantumserre} and hence generate
$U_q(\n_-)$. However, because of the multivalued nature of the OPEs
between the fields $S^W_i(z)$:
$$
S^W_i(z) S^W_j(w) = (z-w)^{(\al_i|\al_j)/\ka} : \! S^W_i(z) S^W_j(w)
\! :
$$
and the factor $z^{(\lam|\al_i)/\ka}$ appearing in the expansion of
$S^W_i(z)$ acting from $\pi^\ka_\lam$ to $\pi^\ka_{\lam-\al_i}$, a general
element of $U_q(\n_-)$, when expressed in terms of the screening
operators $Q_i$, is not well-defined as a linear operator between Fock
representations. Only those elements are well-defined for which there
is a non-trivial integration cycle on the corresponding configuration
space (of the variables of the currents $S^W_i(z)$ that have to be
integrated) with values in a one-dimensional local system. Such an
integration cycle, in turn, exists if and only if the element of
$U_q(\n_-)$, when viewed as a vector in $M^q_\lam$ (where $\lam$ is
the highest weight of the Fock representation from which we want our
operator to act), is a singular vector, i.e. is annihilated by the
generators $e_i, i=1,\dots,r$.

\medskip

{\em Remark on notation:} Our Heisenberg algebra generators $b_{i,n}$
correspond to $\beta^{-2} b_{i,n}$ of \cite{FF93}, and our $\ka$
corresponds to $\beta^{-2}$. However, we have a different sign in the
definition of the screening currents $S^W_i(z)$ (see formula
\eqref{SiW}) compared to \cite{FF93}, and for this reason our
$\pi^\ka_\nu$ corresponds to $\pi_{-\nu \beta}$ of \cite{FF93}. In
addition, our $U_q(\n_-)$ corresponds to $U_q(\n_+)$ of \cite{FF93},
for the same reason. Apart from this sign change, our notation is
compatible with that of \cite{FF93}.

\medskip

According to Lemma 4.6.6 of \cite{FF93}, we have the following result.

\begin{Lem}    \label{linop}
  Let $P \in U_q(\n_-)$ be such that $P \cdot {\mathbf 1}_\nu$ is a
  singular vector of $M_\nu^q$ of weight $\nu-\gamma$. Then for
  irrational $\ka$ the operator $V_P^\ka$ defined by formula (4.6.1) of
  \cite{FF93} (with $\beta=\ka^{-1/2}$) is a well-defined homogeneous
  linear operator $\pi^\ka_\nu \to \pi^\ka_{\nu-\gamma}$.
\end{Lem}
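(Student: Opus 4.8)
The plan is to reduce the statement to Lemma 4.6.6 of \cite{FF93}, once the dictionary of conventions from the Remark above is in place. First I would recall the construction of $V_P^\ka$: writing $P\in U_q(\n_-)$ as a $\C$-linear combination of iterated $q$-commutator monomials in generators $f_{i_1},\dots,f_{i_k}$, so that $P\cdot\mathbf{1}_\nu$ has weight $\nu-\gamma$ with $\gamma=\al_{i_1}+\dots+\al_{i_k}$, one replaces each $f_{i_a}$ by the screening current $S^W_{i_a}(z_a)$, forms the corresponding multivalued forms
\[
\prod_{a<b}(z_a-z_b)^{(\al_{i_a}|\al_{i_b})/\ka}\,\prod_a z_a^{(\nu|\al_{i_a})/\ka}\,:\!S^W_{i_1}(z_1)\cdots S^W_{i_k}(z_k)\!:\;dz_1\cdots dz_k
\]
on the configuration space $Y_k=\{(z_1,\dots,z_k)\in(\C^\times)^k\mid z_a\neq z_b\}$, and integrates them over suitable (regularized) ordered chains, weighting the terms by the combinatorial coefficients produced by the $q$-commutators. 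Here the exponents are dictated by the OPE $S^W_i(z)S^W_j(w)=(z-w)^{(\al_i|\al_j)/\ka}:\!S^W_i(z)S^W_j(w)\!:$ and by the prefactor $z^{(\nu|\al_i)/\ka}$ in the expansion of $S^W_i(z):\pi^\ka_\nu\to\pi^\ka_{\nu-\al_i}$, and the integration pairs the form against the rank-one local system $\mc{L}_k(\nu,\ka)$ attached to these exponents; this is formula (4.6.1) of \cite{FF93}.

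What must be checked is that, for irrational $\ka$, this prescription yields a single-valued, monodromy-invariant operator --- equivalently, that the twisted chain assembled from $P$ is a genuine cycle in $H_k(Y_k,\mc{L}_k(\nu,\ka))$, so that $V_P^\ka$ factors through the period pairing and lands in $\Hom(\pi^\ka_\nu,\pi^\ka_{\nu-\gamma})$. Following \cite{FF93}, which builds on \cite{BMP,SV,Varchenko} (with the twisted-homology formalism as in \cite{AomKit11}), for irrational $\ka$ the local system $\mc{L}_k(\nu,\ka)$ is generic in the relevant sense: its homology is concentrated in degree $k$, the period pairing between twisted homology and twisted cohomology is nondegenerate, and the regularization of the ordered chains is unobstructed. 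Under these hypotheses \cite{FF93} produces a linear isomorphism between the space of singular vectors of weight $\nu-\gamma$ in the quantum Verma module $M_\nu^q$ and the space of cycles along which the forms above can be integrated --- in effect, the contravariant (Shapovalov) form on $M_\nu^q$ is matched with the intersection form on twisted cycles. Thus the hypothesis that $P\cdot\mathbf{1}_\nu$ is singular supplies exactly the cycle that makes $V_P^\ka$ well defined.

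Homogeneity is then a bookkeeping check. Each $S^W_i(z)$ carries Heisenberg weight $-\al_i$ with respect to the zero modes $b_{j,0}$, so $V_P^\ka$ shifts these weights by $-\gamma$ and hence maps into the Fock module $\pi^\ka_{\nu-\gamma}$. For the conformal grading, the field $S^W_i(z)$ has a definite $L_0$-degree read off from the Virasoro field \eqref{Ti}; combining these with the degrees of the monomials $z_a^{(\nu|\al_{i_a})/\ka}$ and $(z_a-z_b)^{(\al_{i_a}|\al_{i_b})/\ka}$ and with the $k$ integrations $dz_1\cdots dz_k$ gives $V_P^\ka$ a single total $L_0$-degree, so the operator is homogeneous, as claimed.

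I expect the only step requiring genuine care to be the bookkeeping of conventions together with the verification of genericity. Concretely, one must apply the dictionary of the Remark ($\beta=\ka^{-1/2}$, the sign change in $S^W_i(z)$ that turns our $\pi^\ka_\nu$ into $\pi_{-\nu\beta}$ of \cite{FF93}, and the attendant swap $U_q(\n_-)\leftrightarrow U_q(\n_+)$), and then check that ``$\ka$ irrational'' implies the hypotheses under which Lemma 4.6.6 of \cite{FF93} is proved: that $q=e^{\pi i/\ka}$ is not a root of unity, and that no ``resonance'' $\Z$-linear combination of the exponents $(\al_i|\al_j)/\ka$ and $(\nu|\al_i)/\ka$ --- the combinations whose vanishing would raise the homological degree of $\mc{L}_k(\nu,\ka)$ or obstruct the regularization --- can occur for $\nu\in P$. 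The latter is a short computation with the root and weight lattices, after which the lemma follows from Lemma 4.6.6 of \cite{FF93}.
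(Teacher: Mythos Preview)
Your proposal is correct and matches the paper's approach exactly: the paper gives no proof of this lemma beyond the sentence ``According to Lemma 4.6.6 of \cite{FF93}, we have the following result,'' so the entire content is the reduction to that lemma via the dictionary in the Remark on notation. Your write-up simply unpacks what lies behind that citation (the construction of $V_P^\ka$, the identification of singular vectors with twisted cycles, and the homogeneity check), which is more detail than the paper itself provides but is in the same spirit.
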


For example, let $P=f_i^n$, where $n \in \Z_+$. Then $P {\mathbf 1}_\nu$
is a singular vector in $M^q_\nu$ if $\nu$ satisfies equation
\eqref{eq:weight} for some $m \in \Z$. The corresponding operator
$V_P^\ka: \pi^\ka_\nu \to \pi^\ka_{\nu-n\al_i}$ is the operator $S^W_i(n)$
given by formula \eqref{eq:intertwiner3}.

Denote by $F^\bullet_\ka(\g)$ the complex $F^*_\beta(\g)$ constructed
in Sect. 4.6 of \cite{FF93}, where $\beta=\ka^{-1/2}$. It consists of
Fock representations and its differentials are constructed using the
BGG resolution of the trivial representation of $U_q(\g)$ and Lemma
\ref{linop}. It was proved in Theorem 4.6.9 of \cite{FF93} that the
$0$th cohomology of $F^\bullet_\ka(\g)$ is the ${\mc W}$-algebra ${\mc
  W}^\ka(\g)$ and all other cohomologies vanish for generic $\ka$.

This will be our complex corresponding to $\lam=0, \cmu=0$. And now we
construct a similar complex $F^\bullet_{\lam,\cmu,\ka}(\g)$ for all
$\lam \in P_+, \cmu \in \check{P}_+$. We will show that the $0$th
cohomology of $F^\bullet_{\lam,\cmu,\ka}(\g)$ is the ${\mc
  W}^\ka(\g)$-module $T_{\lam,\cmu}^\ka$ and all other cohomologies
vanish for irrational $\ka$.

\subsection{Generalization to non-zero $\lam$ and $\cmu$}

First, we generalize the complex to an arbitrary $\lam \in P_+$ and
$\cmu=0$. Consider the BGG resolution $B^{q,\lam}_\bullet(\g)$ of the
irreducible finite-dimensional representation $L^q_\lam$ of $U_q(\g)$
with highest weight $\lam \in P_+$ (see Remark 4.5.7 of
\cite{FF93}). Its degree $j$ part is
$$
B^{q,\lam}_j(\g) = \bigoplus_{\ell(w)=j} M_{w \circ \lam}^q, \qquad w
\circ \lam = w(\lam+\rho)-\rho.
$$
The differential is constructed in the same way as in Sect. 4.5.6 of
\cite{FF93} in the case $\lam=0$: For any pair $w, w'$ of elements of
the Weyl group of $\g$, such that $w\prec w''$, we have the embeddings
$i_{w',w}^q: M_{w' \circ \lam}^q \to M_{w \circ \lam}^q$
satisfying $i_{w'_1,w}^q \circ i_{w'',w'_1}^q = i_{w'_2,s}^q \circ
i_{w'',w'_2}^q$. The differential $d_j^{q,\lam}: B_j^{q,\lam}(\g) \to
B_{j-1}^{q,\lam}(\g)$ is given by the formula
\begin{equation}    \label{qdiff}
d_j^{q,\lam} = \sum_{\ell(w)=j-1,\ell(w')=j,w\prec w'} \epsilon_{w',w} \cdot
i_{w',w}^q.
\end{equation}
The embedding $i_{w',w}^q$ is given by the formula $u {\mathbf
  1}_{w'\circ \lam} \to u P^q_{w',w} {\mathbf 1}_{w \circ \lam}^q,
\forall u \in U_q(\n_-)$, where $P^q_{w',w} {\mathbf 1}_{w \circ
  \lam}^q$ is a singular vector in $M^q_{w \circ \lam}$ of weight
$w'\circ \lam$.

Now we use this BGG resolution to construct a complex
$F^\bullet_{\lam,0,\ka}(\g)$ as in Sect. 4.6.8 of \cite{FF93}. Namely,
we set
$$
F^j_{\lam,0,\ka}(\g) = \bigoplus_{\ell(w)=j} \pi^\ka_{w \circ \lam}
$$
and define the differential of this complex using the differential of
$B_\bullet^{q,\lam}(\g)$ by formulas analogous to formula (4.6.5) of
\cite{FF93}:
\begin{equation}   \label{qdiff1}
d^j_\lam = \sum_{\ell(w)=j-1,\ell(w')=j,w\prec w'} \epsilon_{w',w} \cdot
V_{P_{w',w}^q}^\ka.
\end{equation}
The nilpotency of this differential follows in the same
way as in the case $\lam=0$ \cite{FF93}. Furthermore, it follows from
the construction that the $0$th differential of the complex
$F^\bullet_{\lam,\ka}$ (recall that $s_i\circ \lam =
\lam-(\lam_i+1)\al_i$)
\begin{equation}    \label{zeroth diff}
d^0_\lam: \pi^\ka_\lam \to \bigoplus_{i=1}^r
\pi^\ka_{\lam-(\lam_i+1)\al_i}
\end{equation}
is equal to
\begin{equation}    \label{sum scr}
d^0_\lam = \sum_{i=1}^{r} v_i S^W_i(\lam_i+1),
\end{equation}
where $v_i \in \C^\times$ (compare with formulas \eqref{pi complex}
and \eqref{bard0}). The factors $v_i$ occur because our choice of
integration cycle $\Gamma$ in Theorem \ref{Th:Tsuhiya-Kanie} is {\em a
  priori} different from that of \cite{FF93}. Since the corresponding
cohomology group is one-dimensional, the resulting integrals are
proportional to each other, and $v_i$ is the corresponding
proportionality factor.

\medskip

Finally, we consider arbitrary $\cmu \in \check{P}_+$. We define the
complex $F^\bullet_{\lam,\cmu,\ka}(\g)$ as follows:
$$
F^j_{\lam,\cmu,\ka}(\g) = \bigoplus_{\ell(w)=j} \pi^\ka_{w \circ \lam - \ka\cmu}
$$
and define the differentials by the same formula as for the complex
$F^\bullet_{\lam,\ka}(\g)$.

In particular, the $0$th differential $d^0_{\lam,\cmu}$ equals the
differential \eqref{bard0} (up to the inessential scalar multiples in
front of the summands), and therefore we find that the $0$th
cohomology of our complex $F^\bullet_{\lam,\cmu,\ka}(\g)$ is
the ${\mc W}^\ka(\g)$-module $T^\ka_{\lam,\cmu}$.

\begin{Th}    \label{Th:res}
Let $\ka$ be irrational. Then we have

(1) The $j$th cohomology of the complex
$F^\bullet_{\lam,\cmu,\ka}(\g)$ is isomorphic to
$H^j_{DS,\cmu}({\mathbb V}^\ka_\lam)$.

(2) The $j$th cohomology of $F^\bullet_{\lam,\cmu,\ka}(\g)$ is
$T^\ka_{\lam,\cmu}$ if $j=0$ and $0$ if $j>0$.
\end{Th}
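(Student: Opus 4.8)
The plan is to build a bridge between the two complexes whose $0$th cohomologies we have already identified with $T^\ka_{\lam,\cmu}$: on one side, the complex $\ol C^\bullet_\lam$ obtained by applying $H^\bullet_{DS,\cmu}(?)$ term-by-term to the Wakimoto resolution of Proposition \ref{Pro:resolution-for-generic-level-more-general}, and on the other side, the Fock-space complex $F^\bullet_{\lam,\cmu,\ka}(\g)$ constructed above from the BGG resolution of $L^q_\lam$. First I would prove part (1). Recall that by Lemma \ref{pi shift} the Wakimoto modules are acyclic for $H^\bullet_{DS,\cmu}(?)$, with $H^0_{DS,\cmu}(\Wak{w\circ\lam}{\ka}) \cong \pi^\ka_{w\circ\lam-\ka\cmu}$; consequently the hypercohomology spectral sequence associated to the resolution $C^\bullet_{\lam-\ka\cmu}$ degenerates, and $H^j_{DS,\cmu}({\mathbb V}^\ka_\lam)$ is exactly the $j$th cohomology of the complex with terms $\bigoplus_{\ell(w)=j}\pi^\ka_{w\circ\lam-\ka\cmu}$ and differentials induced by the maps $S_i$. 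So part (1) amounts to identifying these induced differentials with those of $F^\bullet_{\lam,\cmu,\ka}(\g)$, i.e. with the operators $V^\ka_{P^q_{w',w}}$ of \eqref{qdiff1}. For the $0$th differential this was already done — both equal $\sum_i c_i S^W_i(\lam_i+1)$ up to scalars (compare \eqref{bard0} and \eqref{sum scr}) — using Lemma \ref{Lem:screning-for-W}. For the higher differentials, the key point is that both the map induced by $S_i$-type intertwiners between Wakimoto modules on Drinfeld--Sokolov cohomology and the operator $V^\ka_{P^q_{w',w}}$ are, via the semiregular-bimodule isomorphism $\Phi$ of Proposition \ref{Pro:key-iso} together with the argument of Lemma \ref{Lem:screning-for-W}, computed from the same underlying integral of a product of screening currents $S^W_i(z)$ against the one-dimensional local system; the correspondence $P\mapsto V^\ka_P$ of Lemma \ref{linop} is precisely the dictionary between singular vectors in quantum Verma modules and such integrals. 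Since in both resolutions the nonzero homomorphisms between the relevant (Wakimoto, resp. Fock) modules span one-dimensional spaces, the two differentials must agree up to a nonzero scalar on each summand, and a sign/scalar bookkeeping (of the sort already carried out for $v_i$ above) shows the complexes are isomorphic. This gives part (1).

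Part (2) is then immediate: by Theorem \ref{Th:van}, $H^j_{DS,\cmu}({\mathbb V}^\ka_\lam)=0$ for $j\neq 0$, and by definition $H^0_{DS,\cmu}({\mathbb V}^\ka_\lam)=H^0_{DS,\cmu}({\mathbb V}_{\lam,\ka})=T^\ka_{\lam,\cmu}$ (for irrational $\ka$ we have ${\mathbb V}^\ka_\lam={\mathbb V}_{\lam,\ka}$), so combining with (1) yields the assertion. Alternatively, one can prove (2) directly — and thereby reprove the vanishing for irrational $\ka$ independently of Section \ref{vangen} — by the $\ka\to\infty$ degeneration sketched in the introduction to this section: in that limit the screening operators $S^W_i$ satisfy the Serre relations of $\n_-$, the complex $F^\bullet_{\lam,\cmu,\ka}(\g)$ becomes the Chevalley--Eilenberg-type complex computing $H^\bullet(\n_-, \pi^\infty_{\lam})$ for the limiting (cofree) $\n_-$-module structure, whose higher cohomology vanishes by cofreeness, and semicontinuity of cohomology dimension then forces the vanishing for generic, hence all irrational, $\ka$.

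The main obstacle I anticipate is the precise matching of the higher differentials in part (1): one must check not merely that $V^\ka_{P^q_{w',w}}$ is \emph{some} nonzero intertwiner $\pi^\ka_{w'\circ\lam-\ka\cmu}\to\pi^\ka_{w\circ\lam-\ka\cmu}$, but that the whole system $\{V^\ka_{P^q_{w',w}}\}$ is compatible — as a morphism of complexes — with the system $\{S_i\text{-induced maps}\}$ coming from the Wakimoto resolution, including the coherence relations $i^q_{w'_1,w}\circ i^q_{w'',w'_1}=i^q_{w'_2,w}\circ i^q_{w'',w'_2}$ and their counterparts on the $\pi^\ka$ side. Since each individual Hom-space is one-dimensional, the obstruction lives entirely in the scalars, and the cleanest route is to track everything back through $\Phi$ of Proposition \ref{Pro:key-iso} to the $\ghat$-module level, where the corresponding statement for Wakimoto modules follows from Proposition \ref{Pro:resolution-for-generic-level-more-general} and the functoriality of $H^\bullet_{DS,\cmu}$; the quantum-group side then matches by the construction of $F^\bullet_{\lam,\cmu,\ka}(\g)$ out of the same BGG combinatorics (Lemma \ref{linop} and \cite[Sect.~4.6]{FF93}).
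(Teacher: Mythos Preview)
Your overall strategy matches the paper's: use acyclicity of Wakimoto modules for $H^\bullet_{DS,\cmu}$ to reduce part (1) to identifying the induced complex of Fock modules with $F^\bullet_{\lam,\cmu,\ka}(\g)$, and then deduce part (2) from Theorem \ref{Th:van} (with the $\ka\to\infty$ argument as an alternative). You also correctly isolate the crux --- matching the \emph{higher} differentials, not just $d^0$.

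Where your write-up is weaker than the paper is in how that gap is closed. You invoke one-dimensionality of Hom-spaces on both the Wakimoto side and the Fock side and conclude the differentials agree up to scalars. The Wakimoto statement is fine (since $\Wak{w\circ\lam}{\ka}\cong M^{*\ka}_{w\circ\lam}$ for irrational $\ka$), but on the Fock side nothing in the paper guarantees that $\Hom_{\W^\ka(\g)}(\pi^\ka_{w\circ\lam-\ka\cmu},\pi^\ka_{w'\circ\lam-\ka\cmu})$ is one-dimensional, and the $V^\ka_{P^q_{w',w}}$ are a priori only linear operators, not $\W^\ka(\g)$-morphisms. So the ``scalars only'' argument at the Fock level does not stand on its own. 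The paper avoids this entirely by a direct computation: it first realizes the higher differentials of the Wakimoto resolution $C^\bullet_\lam$ themselves as operators $\wt V^\ka_{P^q_{w',w}}$ built from the \emph{Kac--Moody} screening currents $S_i(z)$ (this is the content borrowed from \cite{FF:wak}, where the analogue of Lemma \ref{linop} is established for $S_i(z)$ and nonvanishing is checked via the $\ka\to\infty$ limit); the one-dimensionality of Hom between Wakimoto modules then forces this to be \emph{the} resolution of Proposition \ref{Pro:resolution-weyl}. Applying $H^\bullet_{DS,\cmu}$ and running the same mechanism as in Lemma \ref{Lem:screning-for-W} (via the isomorphism $\Phi$ of Proposition \ref{Pro:key-iso}) literally replaces each $S_i(z)$ by $S^W_i(z)$ in the integrand, so $\wt V^\ka_{P}\mapsto V^\ka_{P}$ on the nose. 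This gives an honest identification of complexes, with no appeal to Hom-dimensions on the Fock side. Your ``cleanest route'' paragraph is heading toward exactly this, but you should make the intermediate step --- explicit construction of the Wakimoto differentials as $\wt V^\ka_P$ from affine screenings --- the load-bearing part, rather than the one-dimensionality of Fock Hom-spaces.

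One small slip: the resolution to which you apply $H^\bullet_{DS,\cmu}(?)$ is that of $\mathbb V^\ka_\lam$ (Proposition \ref{Pro:resolution-weyl}), not the shifted resolution of $\mathbb V^\ka_{\lam-\ka\cmu}$ in Proposition \ref{Pro:resolution-for-generic-level-more-general}; the latter is what one pairs with the \emph{untwisted} functor $H^\bullet_{DS}(?)$ in the alternative argument.
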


\subsection{Proof of Theorem \ref{Th:res}}

We will construct explicitly the higher differentials of the complex
\eqref{resWak}, which is a resolution of the Weyl module ${\mb
  V}^\ka_\lam$ in terms of the Wakimoto modules. This has already been
done in \cite{F:cargese,FF:wak} in the case $\lam=0$ and the
construction generalizes in a straightforward fashion to arbitrary
$\lam \in P_+$.

Recall that
$$
C^j_\lam=\bigoplus_{w\in W\atop \ell(w)=j}\Wak{w\circ \lam}{\ka}.
$$
Thus, the weights of the Wakimoto modules appearing in $C^j_\lam$ are
the same as those of the Verma modules appearing in the $j$th term
$B^{q,\lam}_j(\g)$ of the BGG resolution of $L^q_\lam$. We define the
differentials of the complex $C^\bullet_\lam$ by the above formula
\eqref{qdiff1}, in which we however use a different definition of
$V_P^\ka$. While in the definition of \cite{FF93}, which is used above
in formula \eqref{qdiff1}, $V_P^\ka$ is constructed using the ${\mc
  W}$-algebra screening currents $S^W_i(z)$, now we use in their place
the affine Kac--Moody screening currents $S_i(z)$ given by formula
\eqref{Si}. Let us denote the corresponding operator by
$\wt{V}_P^\ka$.

The fact that an analogue of Lemma \ref{linop} holds for these
screening currents was established in Sect. 3 of \cite{FF:wak}. This
implies that with this definition, we indeed obtain a
complex. Furthermore, for irrational $\ka$ we have $\Wak{w\circ
  \lam}{\ka} \cong M^{*\ka}_{w\circ \lam}$, as shown in the proof of
Proposition \ref{Pro:resolution-weyl}. Therefore we find that the
space of intertwining operators between $\Wak{w\circ \lam}{\ka} \to
\Wak{w'\circ \lam}{\ka}$ with $\ell(w)=j-1,\ell(w')=j,w\prec w'$ is
one-dimensional. We also know that each operator $\wt{V}_P^\ka$ is
non-zero because this is so in the limit $\ka \to \infty$, as
explained in Sect. 4 of \cite{FF:wak}. Therefore the complex
constructed this way is indeed isomorphic to the complex from
Proposition \ref{Pro:resolution-weyl}.

Now we apply to this complex the functor
$H^\bullet_{DS,\cmu}(?)$. According to Lemma \ref{HW}, we have
$H^\bullet_{DS,\cmu}(\Wak{w\circ \lam}{\ka}) \cong \pi^\ka_{w\circ
  \lam - \ka\cmu}$, so as a graded vector space, the complex we
obtain is precisely the complex
$F^\bullet_{\lam,\cmu,\ka}(\g)$. Furthermore, in the same way as in
the proof of Lemma \ref{Lem:screning-for-W} we obtain that the
corresponding differentials are given by the same formulas as the
differentials of the complex $C^\bullet_\lam$ but we have to replace
the Kac--Moody screening currents $S_i(z)$ by the ${\mc W}$-algebra
screening currents $S^W_i(z)$. Thus, we obtain precisely the
differentials \eqref{qdiff1} of the complex
$F^\bullet_{\lam,\cmu,\ka}(\g)$.

This proves part (1) of Theorem \ref{Th:res}. Part (2) now follows
from Theorem \ref{Th:van} and the definition of $T^\ka_{\lam,\cmu}$.

It is worth noting that the complex $F^\bullet_{\lam,\cmu,\ka}(\g)$
can be obtained in two ways: by applying the functor
$H^\bullet_{DS,\cmu}(?)$ to the resolution $C^\bullet_\lam$ of
$V^\ka_\lam$ (as above), and by applying the functor
$H^\bullet_{DS}(?)$ to the resolution $C^\bullet_{\lam-\ka\cmu}$ of
$V^\ka_{\lam-\ka\cmu}$ from Proposition
\ref{Pro:resolution-for-generic-level-more-general}. The second way
implies that its higher cohomologies vanish because of Proposition
\ref{irr}. Hence we obtain another proof of part (2) of Theorem
\ref{Th:res}.

\subsection{Character formula}    \label{char}

By definition, the character of a ${\mc W}^\ka(\g)$-module $M$ is
$\on{ch}(M) = \on{Tr}_M q^{L_0}$, where $L_0$ is the grading operator
obtained from the Virasoro generator $T(z)$ of ${\mc
  W}^\ka(\g)$. Theorem \ref{Th:res} implies that
$$
\on{ch}(T^\ka_{\lam,\cmu}) = \sum_{w \in W} (-1)^{\ell(w)}
\on{ch}(\pi^\ka_{w \circ \lam - \ka\cmu}).
$$
Next, according to the formula for $T(z)$ given in Sect. 4 of \cite{FF},
$$
\on{ch}(\pi^\ka_{\nu - \ka\cmu}) = q^{\Delta^\ka_{\nu,\cmu}} \prod_{n>0}
(1-q^n)^{-r},
$$
where
\begin{equation}    \label{Deltaka}
\Delta^\ka_{\nu,\cmu} = \frac{1}{2\ka}(\nu|\nu+2\rho) +
\frac{\ka}{2}(\cmu|\cmu+2\check\rho) - \langle
\nu+\rho,\cmu+\check\rho \rangle + \langle \rho,\check\rho \rangle.
\end{equation}
We also find that for every $w \in W$,
$$
\Delta^\ka_{w \circ \lam,\cmu} = \wt\Delta^\ka_{\lam,\cmu} - \bra
w(\lam+\rho),\cmu+\check\rho \ket,
$$
where
$$
\wt\Delta^\ka_{\lam,\cmu} = \frac{1}{2\ka}(\lam|\lam+2\rho) +
\frac{\ka}{2}(\cmu|\cmu+2\check\rho) + \bra \rho,\check\rho \ket.
$$
Therefore
\begin{equation}    \label{char2}
\on{ch}(T^\ka_{\lam,\cmu}) = q^{\wt\Delta^\ka_{\lam,\cmu}} \prod_{n>0}
(1-q^n)^{-r} \sum_{w \in W} (-1)^{\ell(w)} q^{-\bra
w(\lam+\rho),\cmu+\check\rho \ket}.
\end{equation}
Note that the eigenvalues of $L_0$ coincide with the
$\Z_+$-grading introduced in Section \ref{char form} up to a shift by
$\Delta^\ka_{\lam,\cmu}$ given by formula \eqref{Deltaka}. Hence
formula \eqref{char2} is equivalent to formula \eqref{char1}.

\subsection{The limit $\ka \to \infty$}    \label{limit}

In order to pass to the limit $\ka \to \infty$, we redefine the
complex $F^\bullet_{\lam,\cmu,\ka}(\g)$ slightly. Define the complex
$\wt{F}^\bullet_{\lam,\cmu,\ka}(\g)$ by the formula
$$
\wt{F}^j_{\lam,\ka}(\g) = \bigoplus_{\ell(w)=j} \pi^\ka_{w \circ \lam}.
$$
Let us identify $\pi^\ka_{w \circ \lam - \ka\cmu} \cong \pi^\ka_{w
  \circ \lam}$ as free modules with one generator over the negative
part of the Heisenberg Lie algebra. Then we identify
$\wt{F}^\bullet_{\lam,\ka}(\g)$ and $F^\bullet_{\lam,\cmu,\ka}(\g)$ as
vector spaces. The differential on $F^\bullet_{\lam,\cmu,\ka}(\g)$,
given by formula \eqref{qdiff1}, gives rise to the following
differential on $\wt{F}^\bullet_{\lam,\cmu,\ka}(\g)$. Note that the
screening current $S^W_i(z)$ acting on $\pi^\ka_{\nu-\ka\cmu}$
becomes, under the isomorphism $\pi^\ka_{\nu-\ka\cmu} \cong
\pi^\ka_\nu$ the operator $z^{-\cmu_i}S^W_i(z)$, where as before
$\cmu_i = \bra \cmu,\al_i\rangle$. Thus, the differential
$$
\wt{d}^j_{\lam,\cmu}: \wt{F}^j_{\lam,\cmu,\ka}(\g) \to
\wt{F}^{j+1}_{\lam,\cmu,\ka}(\g)
$$
is given by the same formula \eqref{qdiff1} in which we replace each
$S^W_i(z)$ by $z^{-\cmu_i} S^W_i(z)$. For instance, the $0$th
differential
\begin{equation}    \label{zeroth diff1}
\wt{d}^0_{\lam,\cmu}: \pi^\ka_\lam \to \bigoplus_{i=1}^r
\pi^\ka_{\lam-(\lam_i+1)\al_i}
\end{equation}
is equal to
\begin{equation}    \label{sum scr1}
\wt{d}^0_{\lam,\cmu} = \sum_{i=1}^{r} v_i S^W_{i,(\cmu_i)}(\lam_i+1),
\end{equation}
where
\begin{multline}
S^W_{i,(m)}(n) = \int_{\Gamma} S^W_i(z_1)S^W_i(z_2)\dots S^W_i(z_n)
 z_1^{-m} \dots z_n^{-m} dz_1 \dots dz_n: \\
 \pi^\ka_{\nu}\ra \pi^\ka_{\nu-n\alpha_i}
\label{eq:intertwiner4}
\end{multline}
(compare with formula \eqref{eq:intertwiner3}).

Let us now rescale the generators of the Heisenberg Lie algebra as
follows:
$$
b^i_n \mapsto x^i_n = \frac{b^i_n}{\ka}.
$$
The OPEs \eqref{eq:b_i} imply the commutation relations
$$
[x^i_n,x^j_m] = \frac{1}{\ka} (\al_i|\al_j) n \delta_{n,-m}.
$$
We will consider the Heisenberg algebra and its modules with respect
to these new generators $x^i_n, n \in \Z, i=1,\dots,r$. Then in the
limit $\ka \to \infty$ the Heisenberg algebra becomes commutative,
with generators $x^i_n$. Let us fix once and for all the highest
weight vector in in the Fock representation $\pi^\ka_\nu, \nu \in
\h^*$. Then we can identify $\pi^\ka_\nu$ with $\C[x^i_n]_{n<0}$ (this
corresponds to choosing a particular $\C[\ka^{-1}]$-lattice in
$\pi^\ka_\nu \otimes \C[\ka,\ka^{-1}]$; namely, the one generated by
monomials in the $x^i_n$ applied to the highest weight vector). In the
limit $\ka \to \infty$, we obtain a module on which the negative
subalgebra $\C[x^i_n]$ acts freely and all other generators $x^i_n, n
\geq 0$ act by $0$. Thus, the $\ka \to \infty$ limit of $\pi^\ka_\nu$
defined in this way does not depend on $\nu$. We will denote it simply
by $\pi^\infty$.

According to Lemma 4.3.4 of \cite{FF93}, the screening operator
$Q_i^\ka = \int S^W_i(z) dz: \pi^\ka_0 \to \pi^\ka_{-\al_i}$ has the
following expansion in $\ka^{-1} = \beta^2$:
$$
Q_i^\ka = \ka^{-1} Q_i + \ka^{-2}(\dots),
$$
where bracketed dots represent a power series in non-negative powers
in $\ka^{-1}$ (the difference in sign is due to our choice of sign in
the definition of the screening currents; see Remark on notation in
Section \ref{recoll}). The leading term $Q_i$ is given by formula
(2.2.4) of \cite{FF93} (note that $Q_i = T_i^{-1} \wt{Q}_i$):
\begin{equation}    \label{Qi}
Q_i = \sum_{n<0} S^i_{n+1} \pa^{(i)}_n,
\end{equation}
where the $S^i_n$ are the Schur polynomials given by the generating
function
\begin{equation}    \label{Sin}
\sum_{n\leq 0} S^i_n z^n = \exp(\sum_{m<0} -\frac{x^i_m}{m} z^m)
\end{equation}
and
\begin{equation}    \label{pai}
\pa^{(i)}_n = \sum_{j=1}^r (\al_i|\al_j) \frac{\pa}{\pa
x^j_n}
\end{equation}

In the same way, we obtain an analogous formula for
$$
Q_{i,(\cmu_i)}^\ka = \int S^W_i(z) z^{-\cmu_i} dz: \pi^\ka_0 \to
\pi^\ka_{-\al_i}, \qquad \cmu_i\geq 0.
$$
Namely,
$$
Q_{i,(\cmu_i)}^\ka = \ka^{-1} Q_{i,(\cmu_i)} + \ka^{-2}(\dots),
$$
where
\begin{equation}    \label{Qim}
Q_{i,(\cmu_i)} = \sum_{n<-\cmu_i} S^i_{n+\cmu_i+1} \pa^{(i)}_n,
\end{equation}
Thus, for each $\cmu \in \check{P}_+$ we obtain an $r$-tuple of
operators $Q_{i,(\cmu_i)}$ on the space $\pi^\infty$. These are
actually derivatives of the ring $\pi^\infty = \C[x^i_n]_{n<0}$.

\begin{Lem}
The operators $Q_{i,(\cmu_i)}$ satisfy the Serre relations of $\n_-
\subset \g$:
\begin{equation}    \label{serre}
(\on{ad} f_i)^{-a_{ij}+1} \cdot f_j = 0.
\end{equation}
\end{Lem}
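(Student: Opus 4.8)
The plan is to deduce the Serre relations \eqref{serre} from the $q$-Serre relations \eqref{quantumserre} holding at finite level, by passing to the limit $\ka\to\infty$, exactly as in \cite{FF93} for the special case $\cmu=0$ (in which $Q_{i,(\cmu_i)}=Q_i$).

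First I would establish the relations at finite level. For generic $\ka$, set $Q^\ka_{i,(\cmu_i)}=\int S^W_i(z)\,z^{-\cmu_i}\,dz$, the screening charge with spectral parameter, so that $Q^\ka_{i,(0)}=Q^\ka_i$. I claim that in the realization $f_i\mapsto Q^\ka_{i,(\cmu_i)}$ the $q$-Serre relations $(\on{ad}_q f_i)^{1-a_{ij}}\cdot f_j=0$ hold, with $q=e^{\pi i/\ka}$. This is proved by the very same contour-deformation argument that \cite{FF93} uses for $\cmu=0$: the relation among the iterated integrals of the screening currents depends only on the exchange relations $S^W_i(z)S^W_j(w)=(z-w)^{(\al_i|\al_j)/\ka}:\!S^W_i(z)S^W_j(w)\!:$ recalled in Section \ref{recoll}, and the additional factors $z^{-\cmu_i}$, being single-valued since $\cmu_i\in\Z$, leave the monodromy of every integrand unchanged, so the argument carries over verbatim.

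Next I would pass to the limit. Using the $\ka^{-1}$-expansion $Q^\ka_{i,(\cmu_i)}=\ka^{-1}Q_{i,(\cmu_i)}+\ka^{-2}(\dots)$ established above on the rescaled Fock space $\pi^\infty$, together with the fact that the $q$-binomial coefficients occurring in $(\on{ad}_q f_i)^{1-a_{ij}}$ tend to the ordinary binomial coefficients as $\ka\to\infty$, I substitute into the $q$-Serre relation of the previous step and read off its leading term, of order $\ka^{-(2-a_{ij})}$. This gives
$$
\sum_{k=0}^{1-a_{ij}}(-1)^k\binom{1-a_{ij}}{k}\,Q_{i,(\cmu_i)}^k\,Q_{j,(\cmu_j)}\,Q_{i,(\cmu_i)}^{1-a_{ij}-k}=0,
$$
and since left and right multiplication by $Q_{i,(\cmu_i)}$ commute, this is precisely $(\ad Q_{i,(\cmu_i)})^{1-a_{ij}}Q_{j,(\cmu_j)}=0$, i.e.\ \eqref{serre}.

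The main obstacle is the finite-level step: verifying carefully that inserting the single-valued factors $z^{-\cmu_i}$ into the screening charges does not disturb their $q$-Serre relations (equivalently, that the relevant rank-one local systems on the configuration spaces are unaffected); everything after that is a routine limiting argument. An alternative route that avoids finite level altogether is to prove \eqref{serre} directly from \eqref{Qim}. Writing $Q_{i,(\cmu_i)}=\oint \wt S_i(z)\,D_i(z)\,z^{\cmu_i}\,\frac{dz}{2\pi i}$ with $\wt S_i(z)=\exp\!\big(\sum_{k>0}\tfrac{x^i_{-k}}{k}z^{k}\big)$ and $D_i(z)=\sum_{k\ge1}\pa^{(i)}_{-k}z^{-k}$, one finds that commuting $D_i(z)$ past multiplication by $\wt S_j(w)$ produces the factor $-(\al_i|\al_j)\log(1-w/z)$; hence the nested commutator $(\ad Q_{i,(\cmu_i)})^{1-a_{ij}}Q_{j,(\cmu_j)}$ is a finite sum of iterated residues whose integrand, after these operator products are applied, is a total derivative on the configuration space of the integration variables and so has vanishing total residue. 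This cancellation is the $q\to1$ shadow of the Schur-polynomial identity of \cite{FF93}, and it too is insensitive to the single-valued factors $\prod_a z_a^{\cmu_i}$; carrying it out is the computational core of the proof, and it is identical to the $\cmu=0$ case treated in \cite{FF93}.
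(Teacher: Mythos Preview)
Your primary route—establishing the $q$-Serre relations for the twisted screening charges $Q^\ka_{i,(\cmu_i)}$ at finite level and then extracting the classical Serre relations from the leading $\ka^{-(2-a_{ij})}$ term—is correct and genuinely different from the paper's. The paper instead takes (a mode-by-mode version of) your ``alternative route'': it computes $(\ad Q_{i,(\cmu_i)})^m \cdot Q_{j,(\cmu_j)}$ directly by induction on $m$, exactly as in Proposition~2.2.8 of \cite{FF93}, obtaining a closed formula in the Schur polynomials $S^i_n$ and the derivations $\pa^{(i)}_n$ that carries an overall factor $(-a_{ij}-m+1)$ and hence vanishes at $m=1-a_{ij}$. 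The induction uses only the relations $[\pa^{(i)}_n,S^j_m]=-(\al_i|\al_j)\tfrac{1}{n}S^j_{m-n}$ together with the partial-fractions identity $\tfrac{1}{a(a+b)}+\tfrac{1}{b(a+b)}=\tfrac{1}{ab}$; the twist by $\cmu$ merely shifts the summation ranges and the Schur indices. This is entirely self-contained, whereas your limiting argument imports the finite-level $q$-Serre relations from \cite{BMP,FF93} together with the (correct) observation that the single-valued insertions $z^{-\cmu_i}$ leave the relevant local systems, and hence the contour argument, unchanged. Your approach explains more conceptually why the relations persist; the paper's is shorter here because the $\cmu=0$ computation is already available and its extension to general $\cmu$ is mechanical.
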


\begin{proof} The proof is essentially the same as the proof of
  Proposition 2.2.8 of \cite{FF93}, which corresponds to the case
  $\cmu=0$. The crucial formula in that proof is the commutation
  relation
\begin{multline}   \label{commrel0}
(\on{ad} Q_i)^m \cdot Q_j = \\ C_m (-a_{ij}-m+1)
\sum_{n_1,\dots,n_{m+1}<0} S^i_{n_1+1} \dots S^i_{n_m+1} S^j_{n_{m+1}+1}
\frac{1}{n_1 \dots n_m} \\ \cdot \left( \sum_{l=1}^m
\frac{n_l}{n_1 + \dots \widehat{n_l} \ldots + n_{m+1}}
\pa^{(i)}_{n_1 + \dots + n_{m+1}} - \pa^{(j)}_{n_1 + \dots +
n_{m+1}} \right),
\end{multline}
where $C_m$ is a constant (note that there is a typo in this formula
in \cite{FF93}; namely, $S^i_{n_1} \dots S^i_{n_m} S^j_{n_{m+1}}$
should be replaced with $S^i_{n_1+1} \dots S^i_{n_m+1}
S^j_{n_{m+1}+1}$). This formula is proved by induction, using the
relations
$$[\pa^{(i)}_n,S^j_m] = - (\al_i|\al_j) \frac{1}{n} S^j_{m-n}$$ (where
we set $S^j_m = 0$, if $m>0$) and the identity
$$\frac{1}{a(a+b)} + \frac{1}{b(a+b)} =
\frac{1}{ab}.$$
The following formula is a straightforward generalization of formula
\eqref{commrel0}:
\begin{multline}   \label{commrelgen}
(\on{ad} Q_i)^m \cdot Q_j = \\ C_m (-a_{ij}-m+1)
\sum_{n_1,\dots,n_m <-\cmu_i; n_{m+1}<-\cmu_j} S^i_{n_1+\cmu_i+1}
\dots S^i_{n_m+\cmu_i+1} S^j_{n_{m+1}+\cmu_j+1}
\frac{1}{n_1 \dots n_m} \\ \cdot \left( \sum_{l=1}^m
\frac{n_l}{n_1 + \dots \widehat{n_l} \ldots + n_{m+1}}
\pa^{(i)}_{n_1 + \dots + n_{m+1}} - \pa^{(j)}_{n_1 + \dots +
n_{m+1}} \right).
\end{multline}
This proves our Lemma.
\end{proof}

According to Proposition 2.4.6 of \cite{FF93}, in the case of $\cmu=0$
the action of $\n_-$ generated by the operators $Q_i, i=1,\dots,r$, on
$\pi^\infty$ is ``cofree'', i.e. $\pi^\infty \cong U(\n_-)^\vee
\otimes V$ for some graded vector space $V$ with a trivial action of
$\n_-$. Here $U(\n_-)^\vee$ is the restricted dual of the free
$\n_-$-module $U(\n_-)$: $U(\n_-)^\vee = \oplus_\gamma
U(\n_-)^*_\gamma$, where for each element $\gamma$ in the root lattice
of $\g$, $U(\n_-)_\gamma$ stands for the corresponding component in
$U(\n_-)$, which is finite-dimensional. In the same way, one can show
that the action of $\n_-$ generated by $Q_{i,(\cmu_i)}, i=1,\dots,r$,
on $\pi^\infty$ is cofree for all $\cmu \in \check{P}_+$ as well.

Now we are ready to study the limit of the complex
$\wt{F}^\bullet_{\lam,\cmu,\ka}(\g)$ as $\ka \to \infty$. We identify
each Fock representation appearing in it with $\C[x^i_n]$ as above,
and in the formula for the differential rescale the screening
current $S^W_i(z) \mapsto \ka S^W_i(z)$. As explained in Sect. 4.6
of \cite{FF93}, the complex defined this way has a well-defined limit
as $\ka \to \infty$.

Let's first look at the limiting complex
$\wt{F}^\bullet_{\lam,\cmu,\infty}(\g)$ in the case $\lam=0, \cmu=0$
considered in \cite{FF93}. It is shown in the proof of Proposition
4.3.5 of \cite{FF93} that the complex
$\wt{F}^\bullet_{0,0,\infty}(\g)$ computes the cohomology of the
complex $\on{Hom}_{\n_-}(B_\bullet(\g),\pi^\infty)$, where
$B_\bullet(\g)$ is the BGG resolution of the trivial representation
$L_0$ of $\g$ (this resolution is the $q \to 1$ limit of the
resolution $B^{q,0}_\bullet(\g)$ discussed in Section \ref{recoll}
above). Since $\pi^\infty$ is a cofree $\n_-$-module, we find that the
$0$th cohomology is $\on{Hom}_{\n_-}(L_0,\pi^\infty) =
(\pi^\infty)^{\n_-}$ and all higher cohomologies vanish.

In the same way, we show that for general $\lam \in P_+, \cmu \in
\check{P}_+$ we have
$$
\wt{F}^\bullet_{\lam,\cmu,\infty}(\g) \simeq
\on{Hom}_{\n_-}(B^\la_\bullet(\g),\pi^\infty)
$$
where $B^\la_\bullet(\g)$ is the BGG resolution of the irreducible
finite-dimensional representation $L_\lam$ of $\g$ (the $q \to 1$
limit of the resolution $B^{q,\lam}_\bullet(\g)$ discussed in Section
\ref{recoll}) and we consider the action of $\n_-$ on $\pi^\infty$
generated by the operators $Q_{i,(\cmu_i)}, i=1,\dots,r$. Since
$\pi^\infty$ is cofree with respect to this action, we obtain

\begin{Pro}
The $0$th cohomology of the complex
$\wt{F}^\bullet_{\lam,\cmu,\infty}(\g)$ is isomorphic to 
$\on{Hom}_{\n_-}(L_\lam,\pi^\infty)$ and all higher cohomologies vanish.
\end{Pro}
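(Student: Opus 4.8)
The plan is to obtain the Proposition by the argument that settles the case $\lam=\cmu=0$ in \cite{FF93} (see the proof of Proposition~4.3.5 there). Once $\wt{F}^\bullet_{\lam,\cmu,\infty}(\g)$ has been identified, as a complex, with $\on{Hom}_{\n_-}(B^\la_\bullet(\g),\pi^\infty)$ --- where $B^\la_\bullet(\g)$ is the BGG resolution of $L_\lam$ and $\n_-$ acts on $\pi^\infty$ through the operators $Q_{i,(\cmu_i)}$ --- the statement reduces to two facts: that $B^\la_\bullet(\g)\to L_\lam$ is a resolution by free $U(\n_-)$-modules, and that $\pi^\infty$ is cofree over $\n_-$ for this action. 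The first is clear; the second is the cofreeness statement recorded just above. The only features here that go beyond \cite{FF93} are the use of the BGG resolution of $L^q_\lam$ with $\lam\neq 0$ and of the $\cmu$-twisted screening currents $z^{-\cmu_i}S^W_i(z)$, and these are what the Serre relations of the Lemma above are there to accommodate.

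First I would make the identification $\wt{F}^\bullet_{\lam,\cmu,\infty}(\g)\simeq \on{Hom}_{\n_-}(B^\la_\bullet(\g),\pi^\infty)$ precise. On graded vector spaces it is immediate: the degree-$j$ term of $B^\la_\bullet(\g)$ is $\bigoplus_{\ell(w)=j} M_{w\circ\lam}$, a sum of Verma modules over $\g$, each free of rank one over $U(\n_-)$, so $\on{Hom}_{\n_-}(M_{w\circ\lam},\pi^\infty)$ is a single copy of $\pi^\infty$ placed in the appropriate internal degree, matching $\wt{F}^j_{\lam,\cmu,\infty}(\g)=\bigoplus_{\ell(w)=j}\pi^\infty$. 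For the differentials: by the Lemma above, $f_i\mapsto Q_{i,(\cmu_i)}$ extends to an algebra homomorphism $U(\n_-)\to\on{End}(\pi^\infty)$; and, by the construction of $\wt{F}^\bullet_{\lam,\cmu,\ka}(\g)$, the $\ka\to\infty$ limit of its differential \eqref{qdiff1} --- taken with the $\cmu$-twisted currents $z^{-\cmu_i}S^W_i(z)$, each rescaled by $\ka$ as in \cite{FF93} --- is exactly the operator obtained by letting the singular vector $P_{w',w}\in U(\n_-)$, the $q\to 1$ limit of $P^q_{w',w}$, act on $\pi^\infty$ through $f_i\mapsto Q_{i,(\cmu_i)}$. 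Hence the differential of $\wt{F}^\bullet_{\lam,\cmu,\infty}(\g)$ coincides with the one induced on $\on{Hom}_{\n_-}(B^\la_\bullet(\g),\pi^\infty)$ by the BGG differential of $B^\la_\bullet(\g)$. This step --- keeping track of the $q\to 1$ limit of the singular vectors $P^q_{w',w}$ and of the $\beta=\ka^{-1/2}\to 0$ limit of the operators attached to them, with the $\cmu$-twist entering only through the factors $z^{-\cmu_i}$ of \eqref{sum scr1}--\eqref{eq:intertwiner4} --- is the main technical point; it runs parallel to Sect.~4.6 of \cite{FF93} once the Serre relations are available.

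Granting this identification, the Proposition is a routine computation in graded homological algebra, carried out (as in \cite{FF93}) in the category of $\n_-$-modules graded by the root lattice and the internal degree with finite-dimensional graded pieces. Each term of $B^\la_\bullet(\g)$ is a finite direct sum of free $U(\n_-)$-modules, so $B^\la_\bullet(\g)\to L_\lam$ is a resolution by projective $U(\n_-)$-modules, whence $H^\bullet\bigl(\on{Hom}_{\n_-}(B^\la_\bullet(\g),\pi^\infty)\bigr)\cong\on{Ext}^\bullet_{U(\n_-)}(L_\lam,\pi^\infty)$. By cofreeness, $\pi^\infty\cong U(\n_-)^\vee\otimes V$ with $\n_-$ acting trivially on the graded space $V$, where $U(\n_-)^\vee$ is the restricted dual of the free module $U(\n_-)$ as in \cite{FF93}; using the natural isomorphism $\on{Hom}_{\n_-}(M,U(\n_-)^\vee)\cong M^*$, one sees that $\on{Hom}_{\n_-}(B^\la_\bullet(\g),U(\n_-)^\vee)$ is the graded dual of the exact complex $B^\la_\bullet(\g)\to L_\lam$, hence a co-resolution of $L_\lam^*$. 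Tensoring with $V$ --- which commutes with taking cohomology here, $L_\lam$ being finite-dimensional --- gives $H^j\bigl(\on{Hom}_{\n_-}(B^\la_\bullet(\g),\pi^\infty)\bigr)\cong L_\lam^*\otimes V$ for $j=0$ and $0$ for $j>0$, while $L_\lam^*\otimes V\cong\on{Hom}_{\n_-}(L_\lam,\pi^\infty)$. Together with the identification of the preceding paragraph, this yields the Proposition.
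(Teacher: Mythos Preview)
Your proposal is correct and follows essentially the same approach as the paper: identify $\wt{F}^\bullet_{\lam,\cmu,\infty}(\g)$ with $\on{Hom}_{\n_-}(B^\la_\bullet(\g),\pi^\infty)$ for the $\n_-$-action on $\pi^\infty$ generated by the $Q_{i,(\cmu_i)}$, then use cofreeness of $\pi^\infty$ to conclude. The paper records this in two sentences, while you have spelled out the homological algebra (the isomorphism $\on{Hom}_{\n_-}(M,U(\n_-)^\vee)\cong M^*$ and the resulting identification of cohomology with $L_\lam^*\otimes V\cong\on{Hom}_{\n_-}(L_\lam,\pi^\infty)$) that the paper leaves implicit.
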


\begin{Co}
For generic $\ka$, all higher cohomologies of the complex
$F^\bullet_{\lam,\cmu,\infty}(\g)$ vanish.
\end{Co}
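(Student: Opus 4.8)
The plan is to deduce the vanishing at generic $\ka$ from the vanishing at $\ka=\infty$ (the preceding Proposition) by a semicontinuity argument, exactly along the lines of the proof of Theorem 4.6.9 of \cite{FF93} in the case $\lam=\cmu=0$.

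First I would exhibit the complexes as a family over the $\ka^{-1}$-line. After the rescaling of the Heisenberg generators $b^i_n\mapsto x^i_n=b^i_n/\ka$ and of the screening currents $S^W_i(z)\mapsto\ka S^W_i(z)$ described above, each term of $F^\bullet_{\lam,\cmu,\ka}(\g)$ is identified with the fixed graded space $\bigoplus_{\ell(w)=j}\C[x^i_n]_{n<0}$, the differential depends regularly on $\ka^{-1}$ in a neighbourhood of $\ka^{-1}=0$ with value there equal to the differential of $\wt{F}^\bullet_{\lam,\cmu,\infty}(\g)$, while for $\ka\neq 0$, as in \cite{FF93}, the resulting complex has the same cohomology as $F^\bullet_{\lam,\cmu,\ka}(\g)$. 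Next, the whole complex --- which has only $|W|$ terms --- decomposes, via the $\Z_+$-grading of Section \ref{char form} (equivalently the $L_0$-grading of Section \ref{char}, the relative degree shifts between the terms being independent of $\ka$ by the formulas in Section \ref{char}), into a direct sum of finite-dimensional homogeneous subcomplexes; finite-dimensionality of each piece follows from the character formula $\on{ch}(\pi^\ka_{\nu-\ka\cmu})=q^{\Delta^\ka_{\nu,\cmu}}\prod_{n>0}(1-q^n)^{-r}$.

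Then, for each fixed degree $N$, the dimension $h^j_N$ of the $j$th cohomology of the corresponding finite-dimensional complex is an upper semicontinuous function of the entries of the differential, hence of $\ka^{-1}$. By the preceding Proposition, $h^j_N=0$ for all $j>0$ at $\ka^{-1}=0$; therefore $h^j_N=0$ for all $\ka^{-1}$ outside a finite set $E_N\subset\C$. Since $\bigcup_N\{\ka:\ \ka^{-1}\in E_N\}$ is countable, for all generic $\ka$ every higher cohomology of $F^\bullet_{\lam,\cmu,\ka}(\g)$ vanishes, which is the assertion.

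The step I expect to require the most care is the first one: showing that, after the rescalings, all the higher differentials --- not merely the $0$th one, $\sum_i v_i S^W_{i,(\cmu_i)}(\lam_i+1)$, but all the operators $\wt{V}^\ka_{P^q_{w',w}}$ attached to the singular vectors $P^q_{w',w}{\mathbf 1}_{w\circ\lam}$ of the quantum BGG resolution $B^{q,\lam}_\bullet(\g)$ --- depend regularly on $\ka^{-1}$ near $0$ with limit the differential of the cofree complex $\wt{F}^\bullet_{\lam,\cmu,\infty}(\g)$ built from the derivations $Q_{i,(\cmu_i)}$. For $\lam=\cmu=0$ this is the content of Sect.\ 4.6 of \cite{FF93}; the extension to arbitrary $\lam\in P_+$, $\cmu\in\check{P}_+$ uses the BGG resolution $B^{q,\lam}_\bullet(\g)$, Lemma \ref{linop}, and the classical-limit formulas \eqref{Qim}--\eqref{commrelgen}. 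Once this is in place, the remaining steps --- decomposition into finite-dimensional subcomplexes and semicontinuity --- are routine.
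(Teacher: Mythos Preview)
Your proposal is correct and is precisely the argument the paper intends: the Corollary is stated without proof as an immediate consequence of the preceding Proposition, with the understanding (made explicit at the start of Section \ref{limit} and in the introduction to Section \ref{higher}) that one applies the semicontinuity argument of \cite{FF93}, Theorem 4.6.9, now generalized from $\lam=\cmu=0$ to arbitrary $\lam\in P_+$, $\cmu\in\check P_+$. Your identification of the delicate step---regularity in $\ka^{-1}$ of all the differentials $\wt V^\ka_{P^q_{w',w}}$ and their specialization at $\ka^{-1}=0$ to the classical BGG differentials built from the $Q_{i,(\cmu_i)}$---is exactly the point the paper addresses via Lemma \ref{linop}, formulas \eqref{Qim}--\eqref{commrelgen}, and the remark that ``the complex defined this way has a well-defined limit as $\ka\to\infty$''.
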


Note that in Theorem \ref{Th:res},(2) we have proved (by relying on
Theorem \ref{Th:van}) a slightly stronger statement: All higher
cohomologies of the complex $F^\bullet_{\lam,\cmu,\infty}(\g)$ vanish
for irrational $\ka$.

\end{document}